\newtheorem{theorem}{Theorem}
\newtheorem{proposition}[theorem]{Proposition}
\newtheorem{lemma}[theorem]{Lemma}
\newtheorem{remark}[theorem]{Remark}
\newtheorem{example}[theorem]{Example}
\newtheorem{hyp}{Hypothesis}
\renewcommand{\epsilon}{\varepsilon}
\renewcommand{\L}{\mathcal{L}}
\def\Id{\text{\rm Id}}
\def\N{\mathbb{N}}
\def\Z{\mathbb{Z}}
\def\R{\mathbb{R}}
\def\B{\mathcal{B}}
\DeclareMathOperator{\esssup}{esssup}
\begin{document}
	
	\title[Statistical stability and linear response]{Statistical stability and linear response for random hyperbolic dynamics}
	
	\begin{abstract}
		We consider families of random products of close-by Anosov diffeomorphisms, and show that statistical stability and linear response hold for the associated families of equivariant and stationary measures. Our analysis relies on the study of the top Oseledets space of a parametrized transfer operator cocycle, as well as ad-hoc abstract perturbation statements. As an application, we show that, when the quenched central limit theorem holds, under the conditions that ensure linear response for our cocycle, the variance in the CLT depends differentiably on the parameter.
	\end{abstract}
	
	\author{Davor Dragi\v cevi\'c}
	\address{Department of Mathematics, University of Rijeka, Croatia}
	\email{ddragicevic@math.uniri.hr}
	
	\author{Julien Sedro}
	\address{Laboratoire de Probabilit\'es, Statistique et Mod\' elisation (LPSM), Sorbonne Universit\' e, Universit\'e de Paris, 4
		Place Jussieu, 75005 Paris, France}
	\email{sedro@lpsm.paris}

	\maketitle
	
	\section{Introduction}\label{sec:I}
	The aim of this paper is to study stability for the families of equivariant and stationary measures associated with a random product of (uniformly) hyperbolic diffeomorphisms. Those stability properties are related to the following question: in the context of non-autonomous dynamics, how does the statistical properties change when one perturbs the dynamics?
	\medskip
	
	More precisely, we will consider here a family of random hyperbolic diffeomorphisms, $T_{\omega,\epsilon}$, acting on some Riemannian manifold $M$ and indexed by $\omega\in\Omega$ and $\epsilon\in I$, where $(\Omega,\mathcal F,\mathbb P)$ is some probability space, and $0\in I\subset\mathbb R$ is some interval. Endowing the probability space with an invertible map $\sigma:\Omega\circlearrowleft$ that is measure-preserving and ergodic, we may form the \emph{random products over $\sigma$}, defined by
	\begin{equation}
		T_{\omega,\epsilon}^n:=T_{\sigma^n\omega,\epsilon}\circ\dots\circ T_{\omega,\epsilon}.
	\end{equation}
	Assuming that this random product admits a \emph{physical equivariant measure}, that is a measure $h_{\omega}^\epsilon$ satisfying the equivariance condition
	\begin{equation}
		T_{\omega,\epsilon}^*h_{\omega}^\epsilon=h_{\sigma\omega}^\epsilon,
	\end{equation}
	and such that $\mathbb P$-a.s, the ergodic basin of $h_{\omega}^\epsilon$ has positive Riemannian volume\footnote{meaning that for $\mathbb P$-a.e $\omega\in\Omega$, the set$\{x\in M,\frac{1}{n}\sum_{k=0}^{n-1}\delta_{T^k_{\omega,\epsilon}x}\longrightarrow h_{\omega}^\epsilon~\text{weakly}\}$ has positive Riemannian volume.}, we ask the following questions: is the map $\epsilon\in I\mapsto h_{\omega}^\epsilon$ continuous at $\epsilon=0$ in some suitable sense ? Is it differentiable ? If so, can one derive an explicit formula for its derivative? 
	\\The first question is the \emph{statistical stability} problem, the last two are called the \emph{linear response} problem. 
	\medskip
	
	Linear response has received extensive attention, in various context: in the deterministic case (which corresponds, in our setting, to the case where $\Omega$ is reduced to a singleton and one considers a smooth family of maps $(T_\epsilon)_{\epsilon\in I}$), expanding maps of the circle \cite{B2} or in higher dimension \cite{Baladibook,S}, piecewise expanding maps of the interval \cite{B1,BS1} or more general unimodal maps \cite{BS2}, intermittent maps \cite{BahSau,BT,K} have been studied. In the setting of hyperbolic dynamics, the problem of linear response was first considered by Ruelle \cite{Ruelle97} for uniformly hyperbolic maps. A different approach, the so-called weak spectral perturbation (or Gou\"ezel-Keller-Liverani) theory, was devised by \cite{GL} (see also \cite{Baladibook}). Finally, we mention the paper \cite{Dol}, where linear response is established for a wide class of partially hyperbolic systems. 
	
	The random case may be divided in two different subcases: the annealed case and the quenched one, the latter of which we will focus on in this paper. The annealed case may be studied by methods very similar to the deterministic one, namely weak spectral perturbation for the associated family of transfer operators, and often enjoy a convenient ``regularization property" (see e.g \cite{GG} or \cite{GS}). We also mention~\cite{BRS}, where the authors deal with  annealed perturbation of uniformly and non-uniformly expanding maps. For annealed perturbation of Anosov diffeomorphisms, very general results were obtained in \cite{GL}. 
	
	The study of the quenched case is more recent, and the literature on the subject is sparse. Indeed, in this situation one cannot use the tools devised in the deterministic or annealed case, as the dynamically relevant objects shift from the spectral data of individual transfer operators to the Lyapunov-Oseledets spectra associated with a cocycle of such transfer operators. For the statistical stability problem in this context, we refer to \cite{B0,BKS,Bo,FGTQ}. Recently, the interesting preprint \cite{C} develops an analogue of the Gou\"ezel-Keller-Liverani theory to study regularity of the exceptional Oseledets spectrum, for quasi-compact cocycles having a dominated splitting, but only up to Lipschitz regularity. This machinery could in principle be applied to our setting, to obtain a result similar to our Theorem \ref{sst}. We observe that, although our result is less general since it only concerns the top Oseledets space, it has the nice property of giving an explicit modulus of continuity, and to have an elementary proof.

	For the response problem, a very general study is presented in \cite{RS}, in the case of a random products of uniformly expanding maps, with a finite or countable number of branches, and in any finite dimension. The idea is to express the equivariant family of measures of the random product as the fixed point of a family of cone-contracting maps that exhibits suitable regularity properties, and to deduce the wanted smoothness of the equivariant measures by some implicit-function like argument. 
	
	We emphasize that the results we present here rely on methods that are quite different from those in the previously discussed paper, as they do not rely on Birkhoff cone contraction techniques. We also remark that, contrary to the expanding case, the use of the Gou\"ezel-Liverani scale of anisotropic spaces (or, for that matter, any of the available scale of anisotropic spaces) limits us to products of nearby (in the $C^{r+1}$ topology) diffeomorphisms.
	
	A few months after the present paper was made available as a preprint, the Gou\"ezel-Keller-Liverani theory for cocycles \cite{C} was further generalized in \cite{CN}, to cover the case of quenched linear, as well as higher order, response. In particular, \cite[Thm 3.6]{CN} generalizes our Theorem \ref{1043} to higher-order Taylor expansions, as remarked in \cite[Rem. 3.8]{CN}. The main idea behind this generalization, namely lifting the cocycle to the so-called Mather operator (to which the deterministic G-K-L theory is then applied), is somehow present in our approach (see e.g. the proof of Proposition \ref{836}), although the latter is independent on weak spectral perturbation theory. We finally remark that, although it should be possible in principle, \cite{CN} does not state any linear response formula.  
	\medskip
	
	Before going any further, we would like to point out a subtle issue, that is peculiar to the the quenched case, and related to the ``suitable sense" for which the question of statistical stability and linear response may be answered. In the deterministic case, this means finding a suitable topology into which the invariant measure will live (e.g. $C^r(\mathbb S^1,\R)$, $r>1$ for the a.c.i.p of an expanding map of the circle, or as a distribution of order one for smooth deformations of unimodal maps, see \cite{BS2}). 
	In the quenched case, one also has to take care of the random parameter $\omega\in\Omega$. There are several natural possibilities:
	the \emph{almost sure} sense (i.e, one studies the a.s regularity of $\epsilon\in I\mapsto h_{\omega}^\epsilon\in\mathcal B$, with $\mathcal B$ a suitable Banach space in the range), the \emph{essentially bounded} sense (where one studies the regularity of $\epsilon\in I\mapsto h_{\omega}^\epsilon\in L^\infty(\Omega,\mathcal B)$), the $L^1$ sense (where the map of interest is $\epsilon\in I\mapsto h_{\omega}^\epsilon\in L^1(\Omega,\mathcal B)$). It is easy to see that the $L^\infty$ sense is the strongest one. Furthermore, given the relation between the equivariant measures and the stationary one, the $L^1$ sense implies asking the questions of stability and response for the stationary measure of the skew-product. 
	However, an ambiguity arises when one considers the ``almost sure" sense : indeed, it may be that the set of random parameters for which certain estimates on the equivariant measure $h_{\omega}^\epsilon$ holds (let us denote it by $\Omega_\epsilon$) depends on $\epsilon$. 
	In this situation, it is not clear whether a statement like ``$h_{\omega}^\epsilon \to h_{\omega}^0$ when $\epsilon\to 0$, $\mathbb P$-a.s.'' has any probabilistic meaning, since it would hold on $\bigcap_{\epsilon\in I}\Omega_\epsilon$, which may be non-measurable set (as the intersection is taken over an uncountable set).
	For this reason, we refrain from considering the ``almost sure" sense for the regularity results we present, and instead focus on the $L^\infty$-sense.
	\medskip
	
	The paper is organized as follows: In Section \ref{sec:main}, after recalling useful properties of the Gou\"ezel-Liverani anisotropic Banach spaces, we present and discuss our set-up (Hypothesis \ref{hyp:reg}), we state our main result (Theorem \ref{thm:main}) as well as a \emph{quenched linear response formula} \eqref{eq:quenchedlinresp}, reminiscent of \cite{RS,Ruelle97}, and give explicit examples of systems to which this setting apply (Section \ref{sec:examples}). In Section \ref{P}, we present abstract theorems on quenched statistical stability (Theorem \ref{sst}) and quenched linear response (Theorem \ref{1043}), applicable in particular to the equivariant measure associated with a (sufficiently) smooth family of Anosov diffeomorphisms cocycles. 
	In Section \ref{PMT}, we give the proof of the main Theorem \ref{thm:main}. In Section \ref{sec:app}, we give various applications of the previous results: first, we remark in Theorem \ref{thm:annealedresp} that Theorem \ref{1043} easily imply response for the stationary measure of the skew-product associated to the cocycle, and that this can be used to establish linear response for a class of deterministic, partially hyperbolic systems. In Section \ref{sec:var}, we prove Theorem \ref{variance} which gives the differentiability  w.r.t the parameter of the variance in the quenched central limit theorem (satisfied by the Birkhoff sum of random observable satisfying certain conditions).
	\\Finally, in Section \ref{sec:other}, we discuss applications of our approach to other type of random hyperbolic systems: random compositions of uniformly expanding maps, or random two-dimensional piecewise hyperbolic maps.
	
	\section{Main Theorem}\label{sec:main}
	
	\subsection{A class of anisotropic Banach spaces introduced by Gou\"{e}zel and  Liverani}\label{Pre}
	The purpose of this subsection is to briefly summarize the main results from~\cite{GL}. More precisely, we recall the properties of the so-called scale of anisotropic Banach spaces, on which the transfer operator associated to a transitive Anosov diffeomorphism has a spectral gap. The discussion we present here will be relevant when building examples under which the abstract results of the present paper are applicable.

	Let $M$ denote a $C^\infty$ compact and connected Riemannian manifold. Furthermore, let $T$ be a transitive  Anosov diffeomorphism on $M$ of class $C^{r+1}$ for $r>1$.  By  $\mathcal L_T$ we will denote the  transfer operator associated to $T$. We recall that the action of $\mathcal L_T$ on smooth functions $h\in C^r(M, \mathbb R)$ is given by
	\[
	\mathcal L_T h=(h\lvert \det (DT)\rvert^{-1})\circ T^{-1}.
	\]
	
	Let us now briefly summarize the  main results from~\cite{GL}.
	Take $p\in \N$, $p\le r$ and $q>0$ such that $p+q<r$. It is proved in~\cite{GL} that  there exist Banach spaces $\mathcal B^{p,q}=(\mathcal B^{p,q}, \lVert \cdot \rVert_{p,q})$ and $\mathcal B^{p-1, q+1}=(\mathcal B^{p-1, q+1}, \lVert \cdot \rVert_{p-1, q+1})$ with the following properties:
	\begin{itemize}
		\item By construction, $C^r(M,\R)$ is dense in $\mathcal B^{i,j}$ for $(i,j)=\{(p,q), (p-1, q+1)\}$;
		\item By \cite[Lemma 2.1]{GL}, $\mathcal B^{p,q}$ can be embedded in $\mathcal B^{p-1, q+1}$ and 
		the unit ball of $\mathcal B^{p,q}$ is relatively compact in $\mathcal B^{p-1, q+1}$;
		\item By~\cite[Proposition 4.1]{GL}, elements of $\mathcal B^{p,q}$ are distributions of order at most $q$;
		\item By \cite[Lemma 3.2]{GL}, multiplication by a $C^{k+q}$ function, $1\le k\le p$, induces a bounded operator on $\mathcal B^{p,q}$. Moreover, the action of a $C^r$ vector field induces a bounded operator from $\mathcal B^{p,q}$ to $\mathcal B^{p-1,q+1}$.
		\item $\mathcal L_T$ acts as a bounded operator on $\mathcal B^{i,j}$ for $(i,j)=\{(p,q), (p-1, q+1)\}$. Moreover, for each $h\in \mathcal B^{i,j}$ and $\varphi \in C^j(M, \R)$, we have that
		\[
		(\mathcal L_T h)(\varphi)=h(\varphi \circ T), 
		\]
		where by $h(\varphi)$ we denote the action of a distribution $h$ on a test function $\varphi$;
		\item By \cite[Lemma 2.2]{GL}, there exist $A>0$ and $a\in (0,1)$ such that
		\[
		\lVert \mathcal L_T^n h\rVert_{p-1, q+1} \le A\lVert  h\rVert_{p-1, q+1}, \ \text{for $n\in \N$ and $h\in \mathcal B^{p-1, q+1}$}
		\]
		and 
		\[
		\lVert \mathcal L_T^n h\rVert_{p,q} \le Aa^n \lVert h\rVert_{p,q}+A\lVert h\rVert_{p-1, q+1}, \ \text{for $n\in \N$ and $h\in \mathcal B^{p, q}$;}
		\]
		\item By \cite[Theorem 2.3]{GL}, $\mathcal L_T$ is a quasi-compact operator on $\mathcal B^{p,q}$ with spectral radius one. Moreover, $1$ is the only eigenvalue of $\mathcal L_T$ on the unit circle. Finally, $1$ is the simple eigenvalue of $\mathcal L_T$ and the corresponding eigenspace is spanned with the unique SRB measure for $T$.
	\end{itemize}
	\subsection{Regularity assumptions}\label{subsec:reghyp}
	In this section, we will state precisely our regularity assumptions and our main theorem. We start by fixing, once and for all, the system of $C^\infty$ coordinates chart to be the $(\psi_i)_{i=1,\dots,N}$, where $\psi_i:(-r_i,r_i)^d\to M$, and such that the $X_i=\psi_i\left((-r_i/2,r_i/2)^d\right)$ cover $M$ are given by the anisotropic norm construction (see \cite{GL}). We also let $\delta$ be the Lebesgue number of the previous cover. Recall the following fact: if $T$ and $S$ are $C^{r+1}$ maps from $M$ to itself, such that $\sup_{x\in M}d_M(Tx,Sx)\le \frac{\delta}{2}$, then one has: for any $i\in\{1,\dots,N\}$, 
	\[\mathcal J_S(i):=\{j\in\{1,\dots,N\},~S(X_i)\cap X_j\not=\emptyset \}=\mathcal J_T(i),\]
	and one may write \[d_{C^{r+1}}(T,S)=\sum_{i=1}^N\sum_{j\in\mathcal J(i)}\|T_{ij}-S_{ij}\|_{C^{r+1}},\]
	where $\mathcal J(i)=\mathcal J_S(i)=\mathcal J_T(i)$ and $T_{ij}=\psi_j^{-1}\circ T\circ\psi_i:(-r_i,r_i)^d\to (-r_j,r_j)^d$ is a map between open sets in $\mathbb R^d$.
	
	For an interval $0\in I\subset \mathbb R$, we consider a $C^s$ mapping $\mathcal T:I\to C^{r+1}(M,M)$, such that $T_0:=\mathcal T(0)(\cdot)$ is a $C^{r+1}$, transitive Anosov diffeomorphism. Up to shrinking $I$, we may and will assume that for all $\epsilon\in I$, $T_\epsilon:=\mathcal T(\epsilon)(\cdot)$ is a $C^{r+1}$ Anosov diffeomorphism, and that $\sup_{\epsilon\in I}d_{C^{r+1}}(T_\epsilon,T_0)\le \frac{\delta}{4}$.
	In particular, for any $i\in\{1,\dots,N\}$, the set
	\[\mathcal J_\epsilon(i):=\{j\in\{1,\dots,N\},~T_\epsilon(X_i)\cap X_j\not=\emptyset \}\]
	is independent of $\epsilon$. We will informally refer to this property by saying that ``the maps $T_\epsilon$ may be read in the same charts".
	
	Consider now a $\Delta>0$, and set $V:=B_{C^s(I,C^{r+1}(M,M))}(\mathcal T,\Delta)$, i.e. we consider a small ball, in $C^s(I,C^{r+1}(M,M))$ topology, centered at $\mathcal T$. Up to shrinking $\Delta$, we may assume that for any $\mathcal S\in V$, any $\epsilon\in I$, $ S_\epsilon:=\mathcal S(\epsilon)(\cdot)$ is an Anosov diffeomorphism, and that $\sup_{\epsilon\in I}d_{C^{r+1}}(T_\epsilon, S_\epsilon)\le \frac{\delta}{4}$. In particular, for any $i\in\{1,\dots,N\}$, the sets
	\[\mathcal J_{\mathcal S}(i):=\{j\in\{1,\dots,N\},~ S_\epsilon(X_i)\cap X_j\not=\emptyset \}\]
	are independent of $\epsilon$ and $\mathcal S$ both (i.e they only depend on $V$).
	\medskip
	
	\noindent We may now describe the type of perturbed cocycle we will consider in the following:
	\begin{hyp}\label{hyp:reg}
		Let $r>4$, $s>1$, and $0\in I\subset\mathbb R$ an interval; let $\mathcal T$ and  $V\subset C^s(I,C^{r+1}(M,M))$ be as previously described. Furthermore, let 
		$(\Omega,\mathcal F, \mathbb P)$ be a probability space, $\sigma \colon \Omega \to \Omega$ an invertible, ergodic $\mathbb P$-preserving transformation  and consider a measurable mapping 
		\[\mathbf{T}\colon \Omega \to V\]  
		Set $T_{\omega,\epsilon}:=\mathbf{T}(\omega)(\epsilon)( \cdot)$, $\omega \in \Omega$ and $\epsilon \in I$.
	\end{hyp}
	Let us make a few comments on this assumption, based on the previous discussion: 
	\begin{itemize}
		\item We choose the neighborhood $V$ sufficiently small so that for any $\omega\in\Omega$, any $\epsilon\in I$, the collection of $T_{\omega,\epsilon}$ can all be read in the same coordinate charts and share the same set of admissible leaves. In particular, one may study their transfer operators on the same anisotropic Banach spaces.
		\item Our assumption is tailored so that for each fixed $\omega\in\Omega$, $\epsilon \mapsto T_{\omega,\epsilon}$ is a smooth curve of Anosov diffeomorphisms, all close-by to a fixed one (namely, $T_{\omega, 0}$).
	\end{itemize} 
	
	We are now in position to formulate our main result. 
	\begin{theorem}\label{thm:main}
		Let $(T_{\omega,\epsilon})_{\omega\in\Omega,\epsilon\in I}$ be a parametrized cocycle of Anosov diffeomorphisms, satisfying Hypothesis \ref{hyp:reg}.
		Then,  by shrinking $I$ if necessary, there exists a triplet of Banach spaces \[\mathcal B_{ss} \subset \mathcal B_s\subset \mathcal B_w, \]
		and for each $\epsilon\in I$ a  unique family $(h_{\omega}^\epsilon)_{\omega\in\Omega}\subset \mathcal B_{ss}$  with the following properties:
		\begin{itemize}
			\item $\omega \mapsto h_\omega^\epsilon$ is measurable for each $\epsilon \in I$;
			\item $h_\omega^\epsilon$ is a probability measure for $\epsilon \in I$ and $\omega \in \Omega$;
			\item $\mathcal L_{\omega, \epsilon} h_\omega^\epsilon=h_{\sigma \omega}^\epsilon$ for $\epsilon \in I$ and  $\omega \in \Omega$, 
			where $\mathcal L_{\omega, \epsilon}$ denotes the transfer operator of $T_{\omega, \epsilon}$;
			\item the map  $I\ni \epsilon \mapsto h_{\omega}^\epsilon \in L^\infty(\Omega,\mathcal B_w)$ is differentiable at $0$, and for $\phi\in C^{r}(M)$, we have that 
			\begin{equation}\label{eq:quenchedlinresp}
				\partial_\epsilon\left[\int_M\phi dh_\omega^\epsilon \right] \bigg{\rvert}_{\epsilon=0}=\sum_{n=0}^\infty\int_M \partial_{\epsilon}\left[\phi\circ T_{\sigma^{-n}\omega}^{(n)}\circ T_{\sigma^{-n-1}\omega,\epsilon}\right] \bigg{\rvert}_{\epsilon=0}dh_{\sigma^{-n-1}\omega},
			\end{equation}
			where $h_\omega:=h_\omega^0$, $\omega \in \Omega$.
		\end{itemize}
		
	\end{theorem}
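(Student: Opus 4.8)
\emph{Strategy.} The plan is to package the problem as the study of a single transfer-operator cocycle $\epsilon\mapsto(\mathcal{L}_{\omega,\epsilon})_{\omega\in\Omega}$ acting on the Gou\"ezel--Liverani scale, to extract its Oseledets structure uniformly in $\epsilon$, to identify its (one-dimensional) top space with the family of equivariant measures, and then to reduce statistical stability and the response formula to the abstract statements of Section \ref{P} (Theorems \ref{sst} and \ref{1043}). Concretely I would take $\mathcal{B}_{ss}:=\mathcal{B}^{p,q}$, $\mathcal{B}_s:=\mathcal{B}^{p-1,q+1}$, $\mathcal{B}_w:=\mathcal{B}^{p-2,q+2}$ for a suitable choice of $p,q$; this is possible precisely because $r>4$ leaves enough room in the scale, and $s>1$ will be used for the regularity in $\epsilon$. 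Throughout, ``shrinking $I$'' is used to keep all $T_{\omega,\epsilon}$ Anosov, $C^{r+1}$-close to $T_0$, and to keep the operator cocycle a sufficiently small perturbation of the constant cocycle generated by the spectrally-gapped $\mathcal{L}_{T_0}$.

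\textbf{Step 1: the operator cocycle and its hyperbolicity.} By Hypothesis \ref{hyp:reg} every $T_{\omega,\epsilon}$ may be read in the fixed charts and shares the same admissible leaves, so the $\mathcal{L}_{\omega,\epsilon}$ act on the same anisotropic spaces and, by \cite[Lemma 2.2]{GL}, satisfy Lasota--Yorke inequalities with constants $A,a\in(0,1)$ that can be chosen uniform in $(\omega,\epsilon)$ (this uses only $d_{C^{r+1}}(T_{\omega,\epsilon},T_0)\le\delta/2$ and measurability of $\omega\mapsto\mathbf{T}(\omega)$). Since $C^r(M,\R)$ is dense, $\mathcal{B}^{p,q}$ is separable, and $\omega\mapsto\mathcal{L}_{\omega,\epsilon}$ is strongly measurable, so the multiplicative ergodic theorem for quasi-compact cocycles applies and yields an Oseledets splitting of $\mathcal{B}_{ss}$. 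The relation $\mathcal{L}_{\omega,\epsilon}^{*}\mathbf{1}=\mathbf{1}$ forces the top exponent to be $\ge 0$, while the uniform Lasota--Yorke inequality forces every other exponent to be $\le\log a<0$; closeness to $\mathcal{L}_{T_0}$ (which has a genuine spectral gap) then pins the top exponent at $0$ and makes the top Oseledets space $Y_\omega^\epsilon$ one-dimensional, with a measurable generator.

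\textbf{Step 2: the equivariant measures.} Normalizing the generator of $Y_\omega^\epsilon$ to total mass one gives $h_\omega^\epsilon$; positivity follows by writing $h_\omega^\epsilon$ as the normalized limit of $\mathcal{L}_{\sigma^{-n}\omega,\epsilon}^{n}m$ for a fixed probability measure $m$ (e.g.\ the SRB measure of $T_0$), each iterate being a positive measure and the convergence taking place in $\mathcal{B}_s$ thanks to the gap. Measurability of $\omega\mapsto h_\omega^\epsilon$ and the identity $\mathcal{L}_{\omega,\epsilon}h_\omega^\epsilon=h_{\sigma\omega}^\epsilon$ are then automatic. Uniqueness among measurable families of probability measures in $\mathcal{B}_{ss}$ follows from the one-dimensionality of $Y_\omega^\epsilon$ together with the uniform contraction on its complement: any such family projects onto $Y_\omega^\epsilon$ in the limit under iteration, and the mass-one normalization fixes the remaining scalar.

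\textbf{Step 3: regularity in $\epsilon$ and the response formula.} The remaining point is to verify the hypotheses of Theorems \ref{sst} and \ref{1043}, i.e.\ that $\epsilon\mapsto\mathcal{L}_{\omega,\epsilon}$ is H\"older, resp.\ differentiable at $0$, as a map into $L(\mathcal{B}_s,\mathcal{B}_w)$, with modulus of continuity, resp.\ derivative bound, uniform in $\omega$. Continuity follows from the standard estimate $\|\mathcal{L}_S-\mathcal{L}_T\|_{\mathcal{B}_s\to\mathcal{B}_w}\lesssim d_{C^{r+1}}(S,T)$ for nearby Anosov maps (as in \cite{GL}) together with $d_{C^{r+1}}(T_{\omega,\epsilon},T_{\omega,0})=O(\epsilon)$ uniformly in $\omega$, which is exactly the content of $\mathbf{T}(\omega)\in V\subset C^s(I,C^{r+1}(M,M))$ with $s>1$ and $V$ bounded. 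Differentiability at $0$, with derivative $\dot{\mathcal{L}}_\omega:\mathcal{B}_s\to\mathcal{B}_w$, is obtained by differentiating the chart expression of $\mathcal{L}_{\omega,\epsilon}$ in $\epsilon$ and invoking \cite[Lemma 3.2]{GL} (the action of a $C^r$ vector field maps $\mathcal{B}^{p,q}$ into $\mathcal{B}^{p-1,q+1}$) to absorb the loss of one derivative. Feeding this into Theorem \ref{1043} gives differentiability of $\epsilon\mapsto h_\omega^\epsilon\in L^\infty(\Omega,\mathcal{B}_w)$ together with the abstract series
\[
\partial_\epsilon h_\omega^\epsilon\big|_{\epsilon=0}=\sum_{n=0}^{\infty}\mathcal{L}_{\sigma^{-1}\omega,0}\cdots\mathcal{L}_{\sigma^{-n}\omega,0}\,\dot{\mathcal{L}}_{\sigma^{-n-1}\omega}\,h_{\sigma^{-n-1}\omega},
\]
which converges in $\mathcal{B}_w$ because $\int d\big(\dot{\mathcal{L}}_{\omega}h_\omega\big)=\partial_\epsilon\int dh_\omega^\epsilon\big|_{\epsilon=0}=0$, so each summand lies in the uniformly contracted, zero-mass complement of the top Oseledets space. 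Finally, pairing the series with $\phi\in C^r(M)$ and repeatedly using $(\mathcal{L}_Tg)(\varphi)=g(\varphi\circ T)$ and $\big(\dot{\mathcal{L}}_\omega g\big)(\psi)=\partial_\epsilon\big[g(\psi\circ T_{\omega,\epsilon})\big]\big|_{\epsilon=0}$ rewrites the $n$-th term as $\int_M\partial_\epsilon\big[\phi\circ T^{(n)}_{\sigma^{-n}\omega}\circ T_{\sigma^{-n-1}\omega,\epsilon}\big]\big|_{\epsilon=0}\,dh_{\sigma^{-n-1}\omega}$, giving \eqref{eq:quenchedlinresp}.

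\textbf{Main obstacle.} The technical heart is Step 3: proving that $\epsilon\mapsto\mathcal{L}_{\omega,\epsilon}$ is genuinely \emph{differentiable} (not merely Lipschitz) between anisotropic spaces, with all constants uniform in $\omega$ so that the $L^\infty(\Omega,\cdot)$-version of linear response is legitimate. One must budget the levels of the Gou\"ezel--Liverani scale carefully --- at least one for the Lasota--Yorke/quasi-compactness needed to run the MET, and (at least) one more for the loss of regularity inherent in differentiating a transfer operator in the underlying map --- which is why $r>4$ and $s>1$ are imposed; and one must check that the abstract perturbation statements of Section \ref{P} indeed apply to this cocycle with these spaces and with the zero-mass constraint that renders the response series summable.
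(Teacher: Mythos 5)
Your proposal is correct, and its second half --- the uniform Lasota--Yorke bounds, the $O(\epsilon)$ operator-continuity estimate, the derivative operator obtained by differentiating the chart expression (with the action-of-a-$C^r$-vector-field lemma absorbing the loss of one derivative), and the reduction to the abstract Theorems~\ref{sst} and~\ref{1043} on a triplet of the form $\mathcal B^{3,1}\subset\mathcal B^{2,2}\subset\mathcal B^{1,3}$ --- is essentially the argument given in Section~\ref{PMT}. Where you diverge is the existence/uniqueness step: you extract the top Oseledets space from the multiplicative ergodic theorem for quasi-compact cocycles, force the top exponent to zero via $\mathcal L_{\omega,\epsilon}^*\mathbf 1=\mathbf 1$, and normalize a measurable generator; the paper instead runs a Banach fixed-point argument (Proposition~\ref{836}) on the space of bounded measurable $\mathcal B_{ss}$-valued sections normalized by $\psi(\cdot)=1$, producing the equivariant family directly. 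The two routes are equivalent in spirit --- Remark~\ref{rem:MET} spells out the MET interpretation --- but the fixed-point route is self-contained, avoids invoking a heavy external theorem, and delivers the uniform-in-$(\omega,\epsilon)$ bound $\sup_{|\epsilon|\le\epsilon_0}\sup_\omega\|h_\omega^\epsilon\|_{ss}<\infty$ (Lemma~\ref{prop:905}) for free, which is exactly what the proofs of Theorems~\ref{sst} and~\ref{1043} consume; with your MET route that bound would still have to be established separately (by iterating $\mathcal L_{\sigma^{-n}\omega,\epsilon}^n\mathbf 1$, say). Two small imprecisions: (i) the zero-mass property you invoke for convergence of the response series should come from differentiating $\psi(\mathcal L_{\omega,\epsilon}h)=\psi(h)$ in $\epsilon$, giving $\psi(\hat{\mathcal L}_\omega h)=0$ for all $h\in\mathcal B_{ss}$, rather than from the identity you wrote; (ii) the budgeting paragraph slightly under-counts the derivative losses: the remainder estimate \eqref{lim} must go from $\mathcal B_{ss}$ all the way to $\mathcal B_w$, a drop of two rungs, because the second-order Taylor term $\partial_\epsilon^2[\mathcal L_{\omega,\epsilon}\phi]$ involves a double vector-field action --- this is precisely why three spaces, not two, are required, and why $r>4$ (so that $p+q\le 4<r$ with $p\ge 3$) is imposed.
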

	
	\subsection{Examples}\label{sec:examples}
	Here are explicit examples of systems satisfying Hypothesis \ref{hyp:reg}.  In all instances, $r>4$ and $s>1$.
	
	\begin{example}\label{EX1}
		Let $q\in\mathbb N$, $\Omega=\{1,\dots,q\}^\mathbb Z$, endowed with a Bernoulli measure. Consider a family $(T_1,\dots,T_q)$ of (close-enough) $C^{r+1}$ Anosov diffeomorphisms of the $d$-dimensional torus $\mathbb{T}^d$, $p:\mathbb T^d\to\mathbb T^d$ be a $C^{r+1}$ mapping and $0\in I\subset \R$ an interval. We set
		\begin{equation}
			\mathbf T(\omega)(\epsilon,x):=T_i(x)+\epsilon p(x),\quad\text{if}~\omega_0=i,
		\end{equation}
		where $x\in\mathbb T^d$, $\epsilon\in I$ and $\omega=(\omega_n)_{n\in\mathbb Z}\in\Omega$. 
	\end{example}
	
	\begin{example}\label{EX2}
		Let $q\in\mathbb N$, $\Omega=\{1,\dots,q\}^\mathbb Z$, endowed with a Bernoulli measure. Consider a $C^{r+1}$ Anosov diffeomorphism $T$ of $\mathbb{T}^d$. Moreover, consider $p_1,\dots,p_q$ $C^{r+1}$ mappings of $\mathbb T^d$ and $0\in I\subset \R$ an interval,  Then we define, for $\epsilon\in I$, $x\in\mathbb T^d$ and $\omega=(\omega_n)_{n\in\mathbb Z}\in\Omega$ the random map
		\begin{equation}
			\mathbf T(\omega)(\epsilon,x)= T(x)+\epsilon p_i(x), \quad \text{if}~\omega_0=i.
		\end{equation} 
	\end{example}
	
	\noindent In both Examples \ref{EX1} and \ref{EX2}, for each $\omega\in\Omega$, $\mathbf T(\omega)\in C^s(I,C^{r+1}(M,M))$. Furthermore, since for each $i\in\{1,\dots,q\}$, the set $\{\mathbf{T}(\omega)=T_i+\epsilon p\}$ (resp. $\{\mathbf{T}(\omega)=T+\epsilon p_i\}$) is the 1-cylinder $\{\omega_0=i\}$, one easily checks that the map is measurable.
	\begin{example}\label{EX3}
		We now consider the following setting: for $\delta>0$, $\omega\in B_{\mathbb{R}^d}(0,\delta)$ (that is randomly chosen w.r.t Lebesgue measure) and $\epsilon_0>0$, we consider a $C^s$-smooth curve of Anosov diffeomorphisms \[ I:=(-\epsilon_0,\epsilon_0) \ni \epsilon \to T_\epsilon\in C^{r+1}(\mathbb T^d,\mathbb T^d).\] 
		Finally, set 
		\[\mathbf T(\omega)(\epsilon,x):= T_\epsilon(x)+\omega, \quad x\in \mathbb T^d.\]
	\end{example}
	\noindent In this last instance, one easily checks that the map $\Omega \ni \omega \mapsto \mathbf T(\omega)\in C^s(I,C^{r+1}(M,M))$ is continuous and thus measurable.
	
	\section{Some abstract results}\label{P}

	\subsection{Quenched  statistical stability for random systems}\label{QSS}
	In this section we will formulate an abstract result regarding the statistical stability of certain random dynamical systems that in particular applies to random hyperbolic dynamics. 
	
	Let $(\Omega, \mathcal F, \mathbb P)$ be a probability space and consider an invertible  transformation $\sigma \colon \Omega \to \Omega$ which preserves $\mathbb P$. Furthermore, let $\mathbb P$ be ergodic.  
	
	Moreover, let $\mathcal B_w=(\mathcal B_w, \lVert \cdot \rVert_w)$ and $\mathcal B_s=(\mathcal B_s, \lVert \cdot \rVert_s)$ be two Banach spaces such that $\mathcal B_s$ is embedded in $\mathcal B_w$ and that $\lVert \cdot \rVert_w \le \lVert \cdot \rVert_s$ on $\mathcal B_s$.
	Suppose that for each $\omega \in \Omega$, $\mathcal L_\omega$ is a bounded operator both on $\mathcal B_w$ and $\mathcal B_s$.  In addition, assume that $\omega \to \mathcal L_\omega$ is strongly measurable on $\mathcal B_s$, i.e. that the map $\omega \mapsto \mathcal L_\omega h$ is measurable for each 
	$h\in \mathcal B_s$.
	For $\omega \in \Omega$
	and $n\in \N$, set
	\[
	\mathcal L_\omega^n:=\mathcal L_{\sigma^{n-1} \omega}\circ \ldots \circ \mathcal L_{\sigma \omega}\circ \mathcal L_\omega. 
	\]
	We consider a fixed, nonzero $\psi\in \mathcal B_s'$, that admits a bounded extension to $\mathcal B_w$ that we still denote by $\psi$,
	and assume that there exist $D, \lambda >0$ such that
	\begin{equation}\label{dec}
		\lVert \mathcal L_\omega^n h\rVert_s \le De^{-\lambda n} \lVert h\rVert_s, 
	\end{equation}
	for $\mathbb P$-a.e. $\omega \in \Omega$, $n\in \N$ and $h\in \mathcal B_s^0$, where 
	\begin{equation}\label{pz} 
		\mathcal B_s^0=\{h\in \mathcal B_s: \psi(h)=0\}.
	\end{equation}
	Obviously, $\mathcal B_s^0$ depends on the choice of $\psi$. However, this dependence has no bearings on our results (see Remark \ref{rem:hyp}), so we do not make it explicit in the notation itself. 
	
	Consider now an interval $I\subset \R$  around $0\in \R$ and suppose that for $\epsilon \in I$, we have  a family $(\mathcal L_{\omega, \epsilon})_{\omega \in \Omega}$ of bounded linear operators on spaces $\mathcal B_s$ and $\mathcal B_w$. Moreover, assume that $\omega \mapsto \mathcal L_{\omega, \epsilon}$ is strongly measurable on $\mathcal B_s$ for each $\epsilon \in I$.
	Analogously  to $\mathcal L_\omega^n$, for $\omega \in \Omega$, $\epsilon \in I$ and $n\in \N$, we define
	\[
	\mathcal L_{\omega, \epsilon}^n:=\mathcal L_{\sigma^{n-1} \omega, \epsilon}\circ \ldots \circ \mathcal L_{\sigma \omega, \epsilon}\circ \mathcal L_{\omega, \epsilon}. 
	\]
	We set $\mathcal L_{\omega, 0}=\mathcal L_\omega$ and we suppose  that 
	there exist $C>0$, $\lambda_1 \in (0,1)$ and a measurable $\Omega' \subset \Omega$ satisfying $\mathbb P(\Omega')=1$  such that for each $\epsilon \in I$:
	\begin{itemize}
		\item for each $\epsilon \in I$,  $\omega \in \Omega'$, $n\in \N$ and $h\in \mathcal B_s$,
		\begin{equation}\label{1}
			\lVert \mathcal L_{\omega, \epsilon}^nh\rVert_s \le C\lambda_1^n \lVert h\rVert_s+C\lVert h\rVert_w;
		\end{equation}
		\item for each $\epsilon \in I$, $\omega \in \Omega'$ and $h\in \mathcal B_s$,
		\begin{equation}\label{2}
			\lVert (\mathcal L_{\omega, \epsilon} -\mathcal L_\omega)h\rVert_w\le C\lvert \epsilon \rvert \cdot  \lVert h\rVert_s;
		\end{equation}
		\item for each $\epsilon \in I$, $\omega \in \Omega'$ and $n\in \N$,
		\begin{equation}\label{3}
			\lVert \mathcal L_{\omega, \epsilon}^n\rVert_w \le C;
		\end{equation}
		\item for each $\epsilon \in I$, $\omega \in \Omega'$, we have that 
		\begin{equation}\label{4}
			\psi(\mathcal L_{\omega, \epsilon} h)=\psi(h) \quad \text{for each $h\in \mathcal B_s$.}
		\end{equation}
	\end{itemize}
	We can assume without any loss of generality that  $\Omega'$ is contained in a full measure set on which~\eqref{dec} holds.
	\begin{remark}\label{rem:hyp}
		\begin{itemize}
			\item  Observe  that we can assume that  $\Omega'$ is $\sigma$-invariant since  we can replace 
			$\Omega'$ with $\Omega''=\bigcap_{k\in \Z}\sigma^k (\Omega')$  which is clearly $\sigma$-invariant and also satisfies $\mathbb P(\Omega'')=1$. Therefore, from now on we will assume that $\Omega'$ is $\sigma$-invariant. 
			\item We note that we can deal with the  more general situation when $\Omega'$ is allowed to depend on $\epsilon$. However, since the current framework is sufficient for applications we have in mind and for the case of simplicity, we will not explicitly deal with this case.
			\item The fact that almost every $\L_{\omega,\epsilon}$ share a left eigenvector is the reason why the dependence on $\psi$ of the space $\mathcal B_s^0$ has no consequence for us. In our examples, $\psi$ will be $\psi(h):=h(1)$ for a finite-order distribution $h$ (and where $1$ denotes the constant test function). 
		\end{itemize}
	\end{remark}
	
	We first show that the above assumptions imply that all the perturbed cocycles $(\mathcal L_{\omega, \epsilon})_{\omega \in \Omega}$ also satisfy the condition of the type~\eqref{dec} whenever $\lvert \epsilon \rvert$ is sufficiently small. More precisely, we have the following auxiliary result.
	\begin{proposition}\label{922}
		There exist $\epsilon_0, D'>0$ and $\lambda' >0$ such that
		\begin{equation}\label{dec1}
			\lVert \mathcal L_{\omega, \epsilon}^n h\rVert_s \le D'e^{-\lambda' n} \lVert h\rVert_s, 
		\end{equation}
		for $\epsilon \in I$ satisfying $\lvert \epsilon \rvert \le \epsilon_0$,  $\omega \in \Omega'$, $n\in \N$ and $h\in \mathcal B_s^0$.
	\end{proposition}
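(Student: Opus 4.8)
The plan is to leverage the exponential decay \eqref{dec} of the unperturbed cocycle on $\mathcal B_s^0$ together with the three stability estimates \eqref{1}, \eqref{2}, \eqref{3} for the perturbed cocycle, in a bootstrap argument that is classical for a single quasi-compact operator but which here must be carried out uniformly in $\omega$ and $\epsilon$ and in a way that respects the cocycle structure. Throughout we use that $\Omega'$ is $\sigma$-invariant (Remark \ref{rem:hyp}) and that $\mathcal B_s^0$ is $\mathcal L_{\omega,\epsilon}$-invariant for every $\omega,\epsilon$, which follows immediately from \eqref{4} and the definition \eqref{pz}.

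\emph{Step 1 (weak-norm comparison).} For $h\in\mathcal B_s^0$ and $n\in\N$, expand the telescoping identity
\[
\mathcal L_{\omega,\epsilon}^n-\mathcal L_\omega^n=\sum_{k=1}^{n}\mathcal L_{\sigma^k\omega,\epsilon}^{\,n-k}\,\bigl(\mathcal L_{\sigma^{k-1}\omega,\epsilon}-\mathcal L_{\sigma^{k-1}\omega}\bigr)\,\mathcal L_\omega^{\,k-1}.
\]
Estimate the $k$-th summand in $\|\cdot\|_w$ using \eqref{3} for the leftmost factor, \eqref{2} for the middle factor, and \eqref{dec} to control $\|\mathcal L_\omega^{k-1}h\|_s$ (here we use $h\in\mathcal B_s^0$); the resulting series is geometric. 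Since also $\|\cdot\|_w\le\|\cdot\|_s$, one obtains a constant $C'>0$, depending only on $C,D,\lambda$, such that
\[
\|\mathcal L_{\omega,\epsilon}^n h\|_w\le De^{-\lambda n}\|h\|_s+C'|\epsilon|\,\|h\|_s\qquad\text{for }h\in\mathcal B_s^0,\ \omega\in\Omega',\ n\in\N.
\]

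\emph{Step 2 (one-step strong contraction).} Fix $N$ with $C\lambda_1^N\le\tfrac12$ and set $a_k:=\|\mathcal L_{\omega,\epsilon}^{kN}h\|_s$ for $h\in\mathcal B_s^0$. Applying \eqref{1} at the base point $\sigma^{kN}\omega\in\Omega'$ to $\mathcal L_{\omega,\epsilon}^{kN}h$ and inserting the bound from Step 1 gives the recursion $a_{k+1}\le\tfrac12a_k+C\bigl(De^{-\lambda kN}+C'|\epsilon|\bigr)\|h\|_s$; iterating from $a_0=\|h\|_s$ yields
\[
a_k\le \tilde C\rho^{\,k}\|h\|_s+\hat C|\epsilon|\,\|h\|_s
\]
for some $\rho\in(0,1)$ and constants $\tilde C,\hat C>0$ independent of $\omega,\epsilon,h$. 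Choose $k_0$ with $\tilde C\rho^{k_0}\le\tfrac14$ and then $\epsilon_0>0$ (no larger than the one from Step 1) with $\hat C\epsilon_0\le\tfrac14$; writing $M_0:=k_0N$ we conclude $\|\mathcal L_{\omega,\epsilon}^{M_0}h\|_s\le\tfrac12\|h\|_s$ for all $|\epsilon|\le\epsilon_0$, $\omega\in\Omega'$, $h\in\mathcal B_s^0$.

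\emph{Step 3 (iteration) and the main difficulty.} Since $\Omega'$ is $\sigma$-invariant and $\mathcal B_s^0$ is invariant under every $\mathcal L_{\omega,\epsilon}$, factoring $\mathcal L_{\omega,\epsilon}^{mM_0}$ into $m$ blocks of length $M_0$ and applying Step 2 to each gives $\|\mathcal L_{\omega,\epsilon}^{mM_0}h\|_s\le 2^{-m}\|h\|_s$ for $h\in\mathcal B_s^0$; for a general $n=mM_0+\ell$ with $0\le\ell<M_0$ one bounds the last $\ell$ steps by $\|\mathcal L_{\omega,\epsilon}^\ell g\|_s\le C\lambda_1^\ell\|g\|_s+C\|g\|_w\le 2C\|g\|_s$ using \eqref{1}, which yields \eqref{dec1} with $\lambda'=(\ln2)/M_0$ and $D'=4C$. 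The main obstacle is Step 2: one must extract a single \emph{deterministic} time $M_0$ at which a genuine contraction in $\|\cdot\|_s$ holds on $\mathcal B_s^0$, with all constants uniform over $\omega\in\Omega'$ and over small $\epsilon$ — this uniformity is exactly what makes the block iteration of Step 3 work, since that step relies on the $\sigma$-invariance of $\Omega'$ rather than on any spectral information about individual operators.
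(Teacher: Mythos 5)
Your proof is correct and follows essentially the same route as the paper's: telescope $\mathcal L_{\omega,\epsilon}^n-\mathcal L_\omega^n$, combine \eqref{dec}, \eqref{1}, \eqref{2}, \eqref{3} to extract a finite-time contraction on $\mathcal B_s^0$ uniform in $\omega\in\Omega'$ and in small $\epsilon$, and then iterate over blocks using the $\sigma$-invariance of $\Omega'$ and the uniform bound $\|\mathcal L_{\omega,\epsilon}^n\|_s\le 2C$ from \eqref{1}. The one place you genuinely diverge is the intermediate weak-norm estimate: the paper bounds $\|\mathcal L_\omega^{k-1}h\|_s$ via \eqref{1} for general $h\in\mathcal B_s$, incurring a term linear in $n$ and a subsequent two-index $(n,m)$ tuning of $\epsilon_0$, whereas you apply \eqref{dec} directly to $\mathcal L_\omega^{k-1}h$ (legitimate since $\mathcal B_s^0$ is $\mathcal L_\omega$-invariant by \eqref{4}), turning the telescoping sum into a geometric series and making the affine recursion $a_{k+1}\le\tfrac12 a_k+b_k$ of your Step 2 cleaner to iterate; this is a modest but real simplification.
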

	
	\begin{proof}
		Let $\epsilon_0>0$ be such that
		\begin{equation}\label{nw}
			\frac{C^4}{1-\lambda_1}\epsilon_0<1/2,
		\end{equation}
		and take an arbitrary $\epsilon \in I$ satisfying $\lvert \epsilon \rvert \le \epsilon_0$.
		
		Since
		\[
		\mathcal L_{\omega, \epsilon}^n-\mathcal L_\omega^n=\sum_{k=1}^n  \mathcal L_{\sigma^k \omega, \epsilon}^{ n-k}(\mathcal L_{\sigma^{k-1} \omega, \epsilon}-\mathcal L_{\sigma^{k-1} \omega})\mathcal L_\omega^{k-1},
		\]
		it follows from~\eqref{1}, \eqref{2} and~\eqref{3} that
		\[
		\begin{split}
			\lVert (\mathcal L_{\omega, \epsilon}^n-\mathcal L_\omega^n) h\rVert_w &\le \sum_{k=1}^n \lVert \mathcal L_{\sigma^k \omega, \epsilon}^{ n-k}(\mathcal L_{\sigma^{k-1} \omega, \epsilon}-\mathcal L_{\sigma^{k-1} \omega})\mathcal L_\omega^{k-1}h\rVert_w\\
			&\le C\sum_{k=1}^n \lVert (\mathcal L_{\sigma^{k-1} \omega, \epsilon}-\mathcal L_{\sigma^{k-1} \omega})\mathcal L_\omega^{k-1}h\rVert_w\\
			&\le C^2 \lvert \epsilon \rvert \sum_{k=1}^n \lVert \mathcal L_\omega^{k-1}h\rVert_s\\
			&\le C^2 \lvert \epsilon \rvert \sum_{k=1}^n(C\lambda_1^{k-1}\lVert h\rVert_s+C\lVert h\rVert_w) \\
			&\le C^3 \lvert \epsilon \rvert \bigg{(}\frac{1}{1-\lambda_1} \lVert h\rVert_s+n \lVert h\rVert_w \bigg{)}, 
		\end{split}
		\]
		and thus
		\begin{equation}\label{aux}
			\lVert (\mathcal L_{\omega, \epsilon}^n-\mathcal L_\omega^n) h\rVert_w \le  C^3 \lvert \epsilon \rvert \bigg{(}\frac{1}{1-\lambda_1} \lVert h\rVert_s+n \lVert h\rVert_w \bigg{)},
		\end{equation}
		for  $n\in \N$, $\omega \in \Omega'$ and $h\in \mathcal B_s$. Thus, \eqref{dec}, \eqref{1} and~\eqref{aux} imply that 
		\[
		\begin{split}
			\lVert \mathcal L_{\omega, \epsilon}^{ n+m} h\rVert_s &=\lVert \mathcal L_{\sigma^m \omega, \epsilon}^n \mathcal L_{\omega, \epsilon}^m h\rVert_s  \\
			&\le C\lambda_1^n  \lVert \mathcal L_{\omega, \epsilon}^m h\rVert_s +C\lVert \mathcal L_{\omega, \epsilon}^m h\rVert_w\\
			&\le C\lambda_1^n (C\lambda_1^m \lVert h\rVert_s+C\lVert h\rVert_w)+C(\lVert \mathcal L_\omega^{ m} h\rVert_w+\lVert (\mathcal L_{\omega, \epsilon}^m-\mathcal L_\omega^m) h\rVert_w)\\
			&\le C^2\lambda_1^{n+m}\lVert h\rVert_s+C^2 \lambda_1^n \lVert h\rVert_s+CDe^{-\lambda m} \lVert h\rVert_s+C^4 \lvert \epsilon \rvert \bigg{(}\frac{1}{1-\lambda_1} +m \bigg{)} \lVert h\rVert_s,
		\end{split}
		\]
		for $n, m\in \N$, $\omega \in \Omega'$ and $h\in \mathcal B_s^0$.  Hence (recall also~\eqref{nw}), we can find (by decreasing $\epsilon_0$ if necessary)  $a\in (0, 1)$ and $N_0\in \N$ (independent of $\epsilon$ and $\omega$) such that 
		\begin{equation}\label{aux2}
			\lVert \mathcal L_{\omega, \epsilon}^{ N_0}h \rVert_s\le a \lVert h\rVert_s,
		\end{equation}
		for  $\omega \in \Omega'$ and $h\in \mathcal B_s^0$. 
		
		On the other hand, it follows  readily from~\eqref{1} that
		\begin{equation}\label{aux3}
			\lVert \mathcal L_{\omega, \epsilon}^n \rVert_s \le 2C \quad \text{for $n\in \N$ and  $\omega \in \Omega'$.}
		\end{equation}
		Take now an arbitrary $n\in \N$ and write it as $n=mN_0+k$ for $m, k \in \N \cup \{0\}$, $0\le k<N_0$.  It follows from~\eqref{aux2} and~\eqref{aux3} that 
		\[
		\begin{split}
			\lVert \mathcal L_{\omega, \epsilon}^n h\rVert_s =\lVert \mathcal L_{\omega, \epsilon}^{mN_0+k}h\rVert_s &\le 2C a^m \lVert h\rVert_s  \\
			&=2Ce^{-m \log a^{-1}}\lVert h\rVert_s  \\
			&=2Ce^{\frac{k}{N_0}\log a^{-1}}e^{-\frac{n}{N_0}\log a^{-1}}\lVert h\rVert_s \\
			&\le 2Ce^{\log a^{-1}}e^{-\frac{n}{N_0}\log a^{-1}}\lVert h\rVert_s,
		\end{split}
		\]
		for $\omega \in \Omega'$, $n\in \N$, $h\in \mathcal B_s^0$. We conclude that~\eqref{dec1}  holds with 
		\[ \lambda'=\log a^{-1}/N_0>0 \quad  \text{and} \quad   D'=2Ce^{\log a^{-1}}>0,\] 
		which are independent on $\epsilon $. The proof of the proposition is completed. 
	\end{proof}
	We are now in position to establish the existence of a random fixed point for the cocycle $(\mathcal L_{\omega, \epsilon})_{\omega \in \Omega}$ whenever $\lvert \epsilon \rvert \le \epsilon_0$.
	\begin{proposition}\label{836}
		For each $\epsilon \in I$ satisfying $\lvert \epsilon \rvert \le \epsilon_0$, there exists a unique family $(h_\omega^\epsilon)_{\omega \in \Omega'} \subset \mathcal B_s$ such that:
		\begin{itemize}
			\item $\omega \mapsto h_\omega^\epsilon$ is measurable and bounded, i.e. 
			\begin{equation}\label{x2}
				\sup_{\omega \in \Omega'} \lVert h_\omega^\epsilon \rVert_s<\infty;
			\end{equation}
			\item for $\omega \in \Omega'$,
			\begin{equation}\label{x3}
				\psi(h_\omega^\epsilon) =1;
			\end{equation}
			\item for  $\omega \in \Omega'$,
			\begin{equation}\label{x1}
				\mathcal L_{\omega, \epsilon} h_\omega^\epsilon =h_{\sigma \omega}^\epsilon.
			\end{equation}
		\end{itemize}
	\end{proposition}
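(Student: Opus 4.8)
The plan is to construct $h_\omega^\epsilon$ as a limit of iterates and then verify the three properties. First, I would fix some reference element $g \in \mathcal B_s$ with $\psi(g) = 1$ (such $g$ exists since $\psi \neq 0$; in the applications one may take $g$ to be the SRB measure of $T_{\omega,0}$ for a convenient $\omega$, but any fixed choice works by the assumption that $\psi(\mathcal L_{\omega,\epsilon} h) = \psi(h)$). Then for $\omega \in \Omega'$ define
\[
h_\omega^\epsilon := \lim_{n \to \infty} \mathcal L_{\sigma^{-n}\omega, \epsilon}^{n} g,
\]
with the limit taken in $\mathcal B_s$. The key point is that this is a Cauchy sequence: for $m > n$ one writes $\mathcal L_{\sigma^{-m}\omega,\epsilon}^{m} g - \mathcal L_{\sigma^{-n}\omega,\epsilon}^{n} g = \mathcal L_{\sigma^{-n}\omega,\epsilon}^{n}\bigl(\mathcal L_{\sigma^{-m}\omega,\epsilon}^{m-n} g - g\bigr)$, and since $\psi\bigl(\mathcal L_{\sigma^{-m}\omega,\epsilon}^{m-n} g - g\bigr) = 0$ by~\eqref{4}, the element $\mathcal L_{\sigma^{-m}\omega,\epsilon}^{m-n} g - g$ lies in $\mathcal B_s^0$, so Proposition~\ref{922} gives $\bigl\lVert \mathcal L_{\sigma^{-n}\omega,\epsilon}^{n}(\mathcal L_{\sigma^{-m}\omega,\epsilon}^{m-n} g - g)\bigr\rVert_s \le D' e^{-\lambda' n}\bigl\lVert \mathcal L_{\sigma^{-m}\omega,\epsilon}^{m-n} g - g\bigr\rVert_s$. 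One still needs the inner norm to be controlled; applying~\eqref{aux3} (the bound $\lVert \mathcal L_{\omega,\epsilon}^n\rVert_s \le 2C$ from the proof of Proposition~\ref{922}) gives $\bigl\lVert \mathcal L_{\sigma^{-m}\omega,\epsilon}^{m-n} g - g\bigr\rVert_s \le (2C+1)\lVert g\rVert_s$, hence the tail is bounded by $D'(2C+1)\lVert g\rVert_s e^{-\lambda' n}$, which is summable and gives a genuine Cauchy sequence. Completeness of $\mathcal B_s$ then yields $h_\omega^\epsilon$.

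Next I would check the three required properties. Measurability of $\omega \mapsto h_\omega^\epsilon$ follows because each finite iterate $\omega \mapsto \mathcal L_{\sigma^{-n}\omega,\epsilon}^{n} g$ is measurable (by strong measurability of $\omega \mapsto \mathcal L_{\omega,\epsilon}$ and composition/invertibility of $\sigma$) and a pointwise limit of measurable maps into a Banach space is measurable. The uniform bound~\eqref{x2} follows from the estimate just derived: $\lVert h_\omega^\epsilon \rVert_s \le \lVert g\rVert_s + D'(2C+1)\lVert g\rVert_s$, independent of $\omega$. For~\eqref{x3}, $\psi$ is bounded on $\mathcal B_s$ and $\psi(\mathcal L_{\sigma^{-n}\omega,\epsilon}^{n} g) = \psi(g) = 1$ for all $n$ by repeated application of~\eqref{4}, so passing to the limit gives $\psi(h_\omega^\epsilon) = 1$. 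For the equivariance~\eqref{x1}, apply the bounded operator $\mathcal L_{\omega,\epsilon}$ to the defining limit: $\mathcal L_{\omega,\epsilon} h_\omega^\epsilon = \lim_n \mathcal L_{\omega,\epsilon}\mathcal L_{\sigma^{-n}\omega,\epsilon}^{n} g = \lim_n \mathcal L_{\sigma^{-n}(\sigma\omega),\epsilon}^{n+1} g = h_{\sigma\omega}^\epsilon$, where the middle equality is just the cocycle identity and the last is the defining limit for $\sigma\omega$ (a reindexing of a convergent sequence).

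Finally, uniqueness: if $(\tilde h_\omega^\epsilon)_{\omega\in\Omega'}$ is another such family, then $d_\omega := h_\omega^\epsilon - \tilde h_\omega^\epsilon$ satisfies $\psi(d_\omega) = 0$, so $d_\omega \in \mathcal B_s^0$, is uniformly bounded in $\mathcal B_s$, and satisfies $\mathcal L_{\sigma^{-n}\omega,\epsilon}^{n} d_{\sigma^{-n}\omega} = d_\omega$ by equivariance. Proposition~\ref{922} then gives $\lVert d_\omega \rVert_s \le D' e^{-\lambda' n}\lVert d_{\sigma^{-n}\omega}\rVert_s \le D' e^{-\lambda' n}\sup_{\omega'\in\Omega'}\lVert d_{\omega'}\rVert_s \to 0$, so $d_\omega = 0$. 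The main obstacle is the Cauchy estimate in the first step — specifically, making sure the pullback sequence $\mathcal L_{\sigma^{-n}\omega,\epsilon}^n g$ (not the forward iterates) is the right object and that one can legitimately feed $\mathcal B_s^0$-elements into Proposition~\ref{922}; the rest is bookkeeping. One should also record that $\Omega'$ is $\sigma$-invariant (noted in Remark~\ref{rem:hyp}) so that $\sigma^{-n}\omega \in \Omega'$ throughout, and that~\eqref{x2} as stated bounds $\sup_{\omega\in\Omega'}$, which is exactly what the argument delivers.
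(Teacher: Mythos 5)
Your proof is correct and reaches the result via the same underlying mechanism as the paper — Proposition~\ref{922} as the contraction estimate, the uniform bound \eqref{aux3}, and the invariance of $\Omega'$ — but the packaging differs in a way worth noting. The paper sets up a Banach space $\mathcal Y$ of bounded measurable maps $v\colon \Omega' \to \mathcal B_s$, a closed affine subset $\mathcal Z = \{v : \psi(v(\omega))=1\}$, and a transfer-like operator $\mathbb L^\epsilon$ with $(\mathbb L^\epsilon v)(\omega) = \mathcal L_{\sigma^{-1}\omega,\epsilon} v(\sigma^{-1}\omega)$, then shows $(\mathbb L^\epsilon)^N$ contracts $\mathcal Z$ and invokes the Banach fixed-point theorem to get existence, uniqueness, measurability, and the uniform bound all at once. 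You instead fix $\omega$, show directly that the pullback sequence $\mathcal L_{\sigma^{-n}\omega,\epsilon}^n g$ is Cauchy in $\mathcal B_s$, and verify the three properties for the resulting limit by hand. These are the same argument in disguise: the paper's Picard iteration starting from the constant function $v_0 \equiv g$ produces precisely your pullback iterates, $((\mathbb L^\epsilon)^n v_0)(\omega) = \mathcal L_{\sigma^{-n}\omega,\epsilon}^n g$. The paper's phrasing is slightly more economical since the contraction principle hands over uniqueness and boundedness for free (and this $\mathcal Y/\mathcal Z$ machinery is reused verbatim for $\mathcal B_{ss}$ in Proposition~\ref{836x}), while your version is more concrete and makes the role of the exponential decay in the pullback direction explicit. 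You correctly handle the measurability question (which is exactly as delicate in both approaches and hinges on strong measurability of $\omega \mapsto \mathcal L_{\omega,\epsilon}$), and you correctly flag that $\sigma$-invariance of $\Omega'$ is needed to keep $\sigma^{-n}\omega \in \Omega'$ throughout.
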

	
	\begin{proof}
		Let  $\mathcal Y$ denote the set of all measurable functions $v\colon \Omega' \to \mathcal B_s$ such that 
		\[
		\lVert v\rVert_\infty=\sup_{\omega \in \Omega'} \lVert v(\omega)\rVert_s<\infty.
		\]
		Then, $(\mathcal Y, \lVert \cdot \rVert_\infty)$ is a Banach space.   Set
		\[
		\mathcal Z:=\{v\in \mathcal Y: \psi(v(\omega))=1 \ \text{for $\omega \in \Omega'$} \}.
		\]
		Observe that $\mathcal Z$ is nonempty. Indeed, since $\psi$ is nonzero, there exists $g\in \mathcal B_s$ such that $\psi(g)=1$. Set $v_0\colon \Omega' \to \mathcal B_s$ by $v_0(\omega)=g$ for $\omega \in \Omega'$. Then, $v_0\in \mathcal Z$. 
		We claim that $\mathcal Z$ is a closed subset of $\mathcal Y$. Indeed, let $(v_n)_n$ be a sequence in $\mathcal Z$ that converges to some $v\in \mathcal Y$. Then, we have that 
		\[
		\lvert \psi(v_n(\omega))-\psi(v(\omega))\rvert \le \lVert \psi \rVert_s\cdot \lVert v_n(\omega)-v(\omega)\rVert_s\le \lVert \psi \rVert_s \cdot \lVert v_n-v\rVert_\infty,
		\]
		for $n\in \N$ and  $\omega \in \Omega'$, where $\|\psi\|_s$ denotes the norm of $\psi \in \mathcal B_s'$. Hence, $\psi(v(\omega))=1$ for  $\omega \in \Omega'$ and thus $v\in \mathcal Z$. 
		
		For $\lvert \epsilon \rvert \le \epsilon_0$, we define a linear operator $\mathbb L^\epsilon \colon \mathcal Y \to \mathcal Y$ by
		\[
		(\mathbb L^\epsilon v)(\omega)=\mathcal L_{\sigma^{-1} \omega, \epsilon} v(\sigma^{-1}\omega), \quad \omega \in \Omega'. 
		\]
		It follows from~\eqref{aux3} (together with our assumption that $\omega \mapsto \mathcal L_{\omega, \epsilon}$ is strongly measurable on $\mathcal B_s$ for each $\epsilon$) that $\mathbb L^\epsilon$ is a well-defined and  bounded operator. 
		Moreover, $\mathbb L^\epsilon \mathcal Z\subset \mathcal Z$. Indeed, for each $v\in \mathcal Z$ we have  (using~\eqref{4}) that 
		\[
		\psi((\mathbb L^\epsilon v)(\omega))=\psi(\mathcal L_{\sigma^{-1} \omega, \epsilon} v(\sigma^{-1}\omega))=\psi(v(\sigma^{-1}\omega))=1,
		\]
		for  $\omega \in \Omega'$. Thus, $\mathbb L^\epsilon v \in \mathcal Z$.
		
		Let us now choose $N\in \N$ such that $D'e^{-\lambda'N}<1$.  It follows from~\eqref{dec1} that 
		\[
		\begin{split}
			\lVert (\mathbb L^\epsilon)^Nv_1-(\mathbb L^\epsilon)^Nv_2\rVert_\infty &=\sup_{\omega \in \Omega'} \lVert \mathcal L_{\sigma^{-N} \omega, \epsilon}^N (v_1(\sigma^{-N} \omega)-v_2(\sigma^{-N} \omega))\rVert_s \\
			&\le D'e^{-\lambda'N}\sup_{\omega \in \Omega'} \lVert v_1(\sigma^{-N} \omega)-v_2(\sigma^{-N} \omega)\rVert_s \\
			&\le D'e^{-\lambda' N} \lVert v_1-v_2\rVert_\infty, 
		\end{split}
		\]
		for $\lvert \epsilon \rvert \le \epsilon_0$ and $v_1, v_2\in \mathcal Z$.  Hence,  $(\mathbb L^\epsilon)^N$  is a contraction on $\mathcal Z$ and therefore, $\mathbb L^\epsilon$   has a unique fixed point $v^\epsilon \in \mathcal Z$. 
		Thus,  the family  $(h_\omega^\epsilon)_{\omega \in \Omega'}$ defined by $h_\omega^\epsilon :=v^\epsilon (\omega)$ satisfies~\eqref{x2}, \eqref{x3} and~\eqref{x1}.
		
		In order to establish the uniqueness, it is sufficient to note that each family $(h_\omega^\epsilon)_{\omega \in \Omega'}$ satisfying~\eqref{x2}, \eqref{x3} and~\eqref{x1} gives rise to a fixed point of $\mathbb L^\epsilon$ in $\mathcal Z$,  which is unique. The proof of the proposition is completed. 
	\end{proof}
	Set
	\[
	h_\omega :=h_\omega^0 \quad \omega \in \Omega'. 
	\]
	The following is our statistical stability result.
	\begin{theorem}\label{sst}
		Let $\epsilon\in I$, $|\epsilon|\le \epsilon_0$. Then
		\begin{equation}\label{sb}
			\sup_{\omega \in \Omega'}	\lVert h_\omega^{\epsilon} -h_\omega \rVert_{w} \le C|\epsilon||\log(|\epsilon|)|,
		\end{equation}
		where $C>0$ is independent on $\epsilon$.
	\end{theorem}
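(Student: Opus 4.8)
The plan is to run a ``split the backward orbit'' argument, exploiting the backward equivariance of the two families together with the exponential decay \eqref{dec} on $\mathcal B_s^0$ and the first-order estimate \eqref{2}. Since by Remark \ref{rem:hyp} we may take $\Omega'$ to be $\sigma$-invariant, for every $\omega\in\Omega'$ and $n\in\N$, iterating \eqref{x1} (and its $\epsilon=0$ counterpart) backwards gives $h_\omega^\epsilon=\mathcal L_{\sigma^{-n}\omega,\epsilon}^n h_{\sigma^{-n}\omega}^\epsilon$ and $h_\omega=\mathcal L_{\sigma^{-n}\omega}^n h_{\sigma^{-n}\omega}$. Adding and subtracting $\mathcal L_{\sigma^{-n}\omega}^n h_{\sigma^{-n}\omega}^\epsilon$ yields the decomposition
\begin{equation*}
h_\omega^\epsilon-h_\omega=\big(\mathcal L_{\sigma^{-n}\omega,\epsilon}^n-\mathcal L_{\sigma^{-n}\omega}^n\big)h_{\sigma^{-n}\omega}^\epsilon+\mathcal L_{\sigma^{-n}\omega}^n\big(h_{\sigma^{-n}\omega}^\epsilon-h_{\sigma^{-n}\omega}\big).
\end{equation*}
The key observation is that $\psi\big(h_{\sigma^{-n}\omega}^\epsilon-h_{\sigma^{-n}\omega}\big)=1-1=0$ by \eqref{x3}, so $h_{\sigma^{-n}\omega}^\epsilon-h_{\sigma^{-n}\omega}\in\mathcal B_s^0$ and \eqref{dec} applies to the last term, which therefore decays exponentially in $n$; the first term is of first order in $\epsilon$ and will be shown to grow at most linearly in $n$.

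Before estimating I would record the a priori bound
\begin{equation*}
M:=\sup\big\{\lVert h_\omega^\epsilon\rVert_s:\lvert\epsilon\rvert\le\epsilon_0,\ \omega\in\Omega'\big\}<\infty.
\end{equation*}
For each fixed $\epsilon$ this is \eqref{x2}; uniformity in $\epsilon$ follows either directly from the contraction-mapping construction in the proof of Proposition \ref{836} (the fixed point $v^\epsilon$ of $\mathbb L^\epsilon$ satisfies $\lVert v^\epsilon-v_0\rVert_\infty\le(1-D'e^{-\lambda'N})^{-1}\lVert(\mathbb L^\epsilon)^N v_0-v_0\rVert_\infty$, bounded uniformly via \eqref{aux3}), or as follows: fix $g\in\mathcal B_s$ with $\psi(g)=1$, write $h_{\sigma^{-n}\omega}^\epsilon=g+(h_{\sigma^{-n}\omega}^\epsilon-g)$ with $h_{\sigma^{-n}\omega}^\epsilon-g\in\mathcal B_s^0$, apply $\mathcal L_{\sigma^{-n}\omega,\epsilon}^n$, and bound the two pieces by \eqref{1} and \eqref{dec1} respectively; letting $n\to\infty$ gives $\lVert h_\omega^\epsilon\rVert_s\le C\lVert g\rVert_w$, uniformly in $\epsilon$ and $\omega$.

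Now I would estimate the two terms. For the second, \eqref{dec} and $\lVert\cdot\rVert_w\le\lVert\cdot\rVert_s$ give $\big\lVert\mathcal L_{\sigma^{-n}\omega}^n(h_{\sigma^{-n}\omega}^\epsilon-h_{\sigma^{-n}\omega})\big\rVert_w\le De^{-\lambda n}\lVert h_{\sigma^{-n}\omega}^\epsilon-h_{\sigma^{-n}\omega}\rVert_s\le 2MDe^{-\lambda n}$. For the first, the telescoping identity $\mathcal L_{\omega,\epsilon}^n-\mathcal L_\omega^n=\sum_{k=1}^n\mathcal L_{\sigma^k\omega,\epsilon}^{n-k}(\mathcal L_{\sigma^{k-1}\omega,\epsilon}-\mathcal L_{\sigma^{k-1}\omega})\mathcal L_\omega^{k-1}$ combined with \eqref{1}, \eqref{2}, \eqref{3} — that is, exactly estimate \eqref{aux} of the proof of Proposition \ref{922} — gives $\big\lVert(\mathcal L_{\sigma^{-n}\omega,\epsilon}^n-\mathcal L_{\sigma^{-n}\omega}^n)h_{\sigma^{-n}\omega}^\epsilon\big\rVert_w\le C^3\lvert\epsilon\rvert\big(\tfrac{1}{1-\lambda_1}\lVert h_{\sigma^{-n}\omega}^\epsilon\rVert_s+n\lVert h_{\sigma^{-n}\omega}^\epsilon\rVert_w\big)\le C^3M\lvert\epsilon\rvert\big(\tfrac{1}{1-\lambda_1}+n\big)$. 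Hence, for every $n\in\N$,
\begin{equation*}
\sup_{\omega\in\Omega'}\lVert h_\omega^\epsilon-h_\omega\rVert_w\le 2MDe^{-\lambda n}+C^3M\lvert\epsilon\rvert\Big(\tfrac{1}{1-\lambda_1}+n\Big).
\end{equation*}
It remains to optimize in $n$: taking $n=n(\epsilon):=\lceil\lambda^{-1}\lvert\log\lvert\epsilon\rvert\rvert\rceil$ (which is $\ge 1$ once $\epsilon_0<1$) gives $e^{-\lambda n(\epsilon)}\le\lvert\epsilon\rvert$ and $n(\epsilon)\le\lambda^{-1}\lvert\log\lvert\epsilon\rvert\rvert+1$, so, after possibly shrinking $\epsilon_0$ so that $\lvert\log\lvert\epsilon\rvert\rvert\ge 1$ on $\{0<\lvert\epsilon\rvert\le\epsilon_0\}$, the right-hand side is at most $C\lvert\epsilon\rvert\lvert\log\lvert\epsilon\rvert\rvert$ with $C$ independent of $\epsilon$ (and $\epsilon=0$ is trivial). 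This is \eqref{sb}.

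I expect the only step that is not completely routine to be the \emph{uniform} a priori bound $M<\infty$ (uniformity in $\epsilon$, as opposed to the per-$\epsilon$ bound \eqref{x2}); once this is in hand the argument is the standard backward-orbit splitting at the logarithmic time scale $n(\epsilon)\sim\lvert\log\lvert\epsilon\rvert\rvert$, and every remaining inequality is immediate from the cocycle estimates \eqref{dec}, \eqref{1}–\eqref{3} and Proposition \ref{922}.
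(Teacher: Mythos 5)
Your proof is correct and follows essentially the same route as the paper's: the same add-and-subtract decomposition of $h_\omega^\epsilon-h_\omega$, the same two estimates (exponential decay on $\mathcal B_s^0$ via \eqref{dec}, telescoping plus \eqref{1}--\eqref{3} for the cocycle difference), and the same optimization at the logarithmic time scale $n\sim|\log|\epsilon||/\lambda$. The uniform a priori bound $M<\infty$ that you flag as the only nontrivial ingredient is exactly the paper's Lemma \ref{prop:905}, which it proves by the same contraction-mapping argument you sketch first.
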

	
	Before we establish Theorem~\ref{sst}, we need the following auxiliary result. Let $h^\epsilon$ denote the family $(h_\omega^\epsilon)_{\omega \in \Omega}$ given by Proposition~\ref{836}.
	\begin{lemma}\label{prop:905}
		We have that
		\begin{equation}\label{905}
			\sup_{\lvert \epsilon \rvert \le \epsilon_0}\sup_{\omega \in \Omega'} \lVert h_\omega^\epsilon \rVert_s < \infty. 
		\end{equation}
	\end{lemma}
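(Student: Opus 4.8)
The plan is to bootstrap the already-known bound \eqref{x2} --- which holds for each fixed $\epsilon$ but a priori with a constant depending on $\epsilon$ --- into an $\epsilon$-uniform one, by feeding the fixed-point relation \eqref{x1} into the uniform Lasota--Yorke estimate \eqref{1} together with the uniform exponential decay on $\mathcal B_s^0$ supplied by Proposition \ref{922}. Fix once and for all an integer $N\in\N$ with $D'e^{-\lambda'N}\le 1/2$, write $M_0:=\sup_{\omega\in\Omega'}\lVert h_\omega\rVert_s$, which is finite by \eqref{x2} applied at $\epsilon=0$, and, for $\lvert\epsilon\rvert\le\epsilon_0$, write $M_\epsilon:=\sup_{\omega\in\Omega'}\lVert h_\omega^\epsilon\rVert_s$, which is finite by Proposition \ref{836} but not yet known to be bounded in $\epsilon$.

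First I would iterate \eqref{x1} $N$ times to get $h_\omega^\epsilon=\mathcal L_{\sigma^{-N}\omega,\epsilon}^N h_{\sigma^{-N}\omega}^\epsilon$ for every $\omega\in\Omega'$ (recall that $\Omega'$ may be taken $\sigma$-invariant, cf.\ Remark \ref{rem:hyp}). Then I would decompose $h_{\sigma^{-N}\omega}^\epsilon = h_{\sigma^{-N}\omega} + \bigl(h_{\sigma^{-N}\omega}^\epsilon - h_{\sigma^{-N}\omega}\bigr)$; since $\psi(h_\omega^\epsilon)=\psi(h_\omega)=1$ by \eqref{x3}, the bracketed term lies in $\mathcal B_s^0$. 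To the image of the first summand I would apply the Lasota--Yorke inequality \eqref{1} (no $\psi$-constraint is needed there), bounding it by $C\lambda_1^N\lVert h_{\sigma^{-N}\omega}\rVert_s + C\lVert h_{\sigma^{-N}\omega}\rVert_w \le 2CM_0$, using $\lVert\cdot\rVert_w\le\lVert\cdot\rVert_s$ and $\lambda_1<1$. To the image of the second summand I would apply the uniform decay \eqref{dec1} from Proposition \ref{922}, bounding it by $D'e^{-\lambda'N}\lVert h_{\sigma^{-N}\omega}^\epsilon - h_{\sigma^{-N}\omega}\rVert_s \le \tfrac12\bigl(\lVert h_{\sigma^{-N}\omega}^\epsilon\rVert_s + M_0\bigr)$.

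Combining the two bounds and taking the supremum over $\omega\in\Omega'$ (using that $\sigma$ permutes $\Omega'$, so that $\sup_{\omega\in\Omega'}\lVert h_{\sigma^{-N}\omega}^\epsilon\rVert_s = M_\epsilon$) yields $M_\epsilon \le 2CM_0 + \tfrac12 M_\epsilon + \tfrac12 M_0$. Since $M_\epsilon<\infty$ by Proposition \ref{836}, the term $\tfrac12 M_\epsilon$ can be absorbed on the left, giving $M_\epsilon\le(4C+1)M_0$ for all $\lvert\epsilon\rvert\le\epsilon_0$, which is precisely \eqref{905}. The only point requiring care is that the absorption step presupposes $M_\epsilon<\infty$, but this is exactly what Proposition \ref{836} guarantees, so there is no genuine obstacle: the argument is a short bootstrap, structurally parallel to the one in the proof of Proposition \ref{922}, and the resulting constant is completely explicit.
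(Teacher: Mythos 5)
Your argument is correct, but it takes a genuinely different route from the paper's. The paper's proof stays entirely inside the fixed-point machinery of Proposition~\ref{836}: it writes $h^\epsilon=\lim_{k\to\infty}(\mathbb L^\epsilon)^{kN}u$ for an arbitrary $u\in\mathcal Z$ (Banach contraction), picks $k_0$ with $\lVert h^\epsilon-(\mathbb L^\epsilon)^{k_0N}u\rVert_\infty<1$, and then uses the $\epsilon$-uniform bound $\lVert\mathcal L_{\omega,\epsilon}^n\rVert_s\le 2C$ from~\eqref{aux3} (a consequence of~\eqref{1}) to conclude $\lVert h^\epsilon\rVert_\infty\le 2C\lVert u\rVert_\infty+1$. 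Your argument instead works directly with the equivariance relation $h_\omega^\epsilon=\mathcal L_{\sigma^{-N}\omega,\epsilon}^N h_{\sigma^{-N}\omega}^\epsilon$, splits $h_{\sigma^{-N}\omega}^\epsilon$ into $h_{\sigma^{-N}\omega}$ plus a $\mathcal B_s^0$-part (exploiting the normalization~\eqref{x3}), applies~\eqref{1} to the first piece and the uniform decay~\eqref{dec1} to the second, and then absorbs the half of $M_\epsilon$ that reappears, using the a priori finiteness of $M_\epsilon$ from Proposition~\ref{836}. What each buys: the paper's proof is slightly shorter and makes it transparent that only the uniform Lasota--Yorke constant matters; your decomposition-plus-absorption argument is structurally parallel to the proof of Theorem~\ref{sst} itself (same split into an $\epsilon$-uniformly decaying $\mathcal B_s^0$-part and a ``nice'' part), produces a bound $M_\epsilon\le(4C+1)M_0$ referencing only the unperturbed family, and does not need to go back through the contraction-mapping setup. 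Both are sound; neither supersedes the other.
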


	\begin{proof}
		We will use the same notation as in the proof of Proposition~\ref{836}. Take an arbitrary  $u\in \mathcal Z$.  It follows from Banach's contraction principle that 
		\[
		h^\epsilon =\lim_{k\to \infty} ( \mathbb L^\epsilon)^{kN} u,
		\]
		for $\lvert \epsilon \rvert \le \epsilon_0$.  Fix now any $\epsilon$ such that $\lvert \epsilon \rvert \le \epsilon_0$. There exists $k_0\in \N$ such that
		\[
		\lVert h^\epsilon -( \mathbb L^\epsilon)^{k_0N} u \rVert_\infty <1.
		\]
		Hence,  using~\eqref{1} we have that
		\[
		\lVert h^\epsilon \rVert_\infty \le 1+ \lVert ( \mathbb L^\epsilon)^{k_0N} u \rVert_\infty \le 2C\lVert u\rVert_\infty+1,
		\]
		which readily implies the conclusion of the lemma. 
	\end{proof}
	We are now in  a position to prove Theorem~\ref{sst}.
	\begin{proof}[Proof of Theorem~\ref{sst}]
		Take an arbitrary $\epsilon \in I$ such that $|\epsilon|\le \epsilon_0$. 
		Observe that
		\begin{equation}\label{tde}
			\begin{split}
				\lVert h_\omega^{\epsilon} -h_\omega \rVert_w &=\lVert \mathcal L_{\sigma^{-n} \omega, \epsilon}^n h_{\sigma^{-n} \omega}^{\epsilon} -\mathcal L_{\sigma^{-n} \omega}^n h_{\sigma^{-n} \omega}\rVert_w \\
				&\le \lVert \mathcal L_{\sigma^{-n} \omega, \epsilon}^n h_{\sigma^{-n} \omega}^{\epsilon} -\mathcal L_{\sigma^{-n} \omega}^n h_{\sigma^{-n} \omega}^{\epsilon}\rVert_w+\lVert \mathcal L_{\sigma^{-n} \omega}^n (h_{\sigma^{-n} \omega}^{\epsilon}-h_{\sigma^{-n} \omega})\rVert_w,
			\end{split}
		\end{equation}
		for each $n\in \N$ and  $\omega \in \Omega'$.  It follows from~\eqref{dec} and~\eqref{905} that there exists $\tilde D>0$ such that 
		\begin{equation}\label{928}
			\lVert \mathcal L_{\sigma^{-n} \omega}^n (h_{\sigma^{-n} \omega}^{\epsilon}-h_{\sigma^{-n} \omega})\rVert_w \le \lVert \mathcal L_{\sigma^{-n} \omega}^n (h_{\sigma^{-n} \omega}^{\epsilon}-h_{\sigma^{-n} \omega})\rVert_s \le \tilde De^{-\lambda n},
		\end{equation}
		for $n\in \N$ and $\omega \in \Omega'$.
		
		On the other hand, it follows from~\eqref{1}, \eqref{2} and~\eqref{3} that 
		\[
		\begin{split}
			\lVert \mathcal L_{\sigma^{-n} \omega, \epsilon}^n h_{\sigma^{-n} \omega}^{\epsilon} -\mathcal L_{\sigma^{-n} \omega}^n h_{\sigma^{-n} \omega}^{\epsilon}\rVert_w &\le \sum_{j=1}^n  \lVert \mathcal L_{\sigma^{-n+j} \omega}^{n-j} (\mathcal L_{\sigma^{-n+j-1}\omega}-\mathcal L_{\sigma^{-n+j-1}\omega, \epsilon})\mathcal L_{\sigma^{-n} \omega, \epsilon}^{ j-1}h_{\sigma^{-n} \omega}^{\epsilon} \rVert_w\\
			&\le C\sum_{j=1}^n \lVert  (\mathcal L_{\sigma^{-n+j-1}\omega}-\mathcal L_{\sigma^{-n+j-1}\omega, \epsilon})\mathcal L_{\sigma^{-n} \omega, \epsilon}^{ j-1}h_{\sigma^{-n} \omega}^{\epsilon}\rVert_w\\
			&\le C^2\lvert \epsilon \rvert \sum_{j=1}^n \lVert  \mathcal L_{\sigma^{-n} \omega, \epsilon}^{ j-1}h_{\sigma^{-n} \omega}^{\epsilon} \rVert_s\\
			&\le 2nC^3\lvert \epsilon \rvert \cdot \lVert  h_{\sigma^{-n} \omega}^{\epsilon}\rVert_s.
		\end{split}
		\]
		Hence, by~\eqref{905} we have that 
		\begin{equation}\label{929}
			\lVert \mathcal L_{\sigma^{-n} \omega, \epsilon}^n h_{\sigma^{-n} \omega}^{\epsilon} -\mathcal L_{\sigma^{-n} \omega}^n h_{\sigma^{-n} \omega}^{\epsilon}\rVert_w \le 2nC^3 \lvert \epsilon\rvert  \sup_{|\epsilon|\le\epsilon_0} \sup_{\omega \in \Omega'} \lVert h_\omega^{\epsilon} \lVert_s,
		\end{equation}
		for $\omega \in \Omega'$ and $n\in \N$. We conclude from~\eqref{tde}, \eqref{928} and~\eqref{929} that
		\[
		\begin{split}
			\sup_{\omega \in \Omega'} \lVert h_\omega^{\epsilon} -h_\omega \rVert_w &\le 2nC^3 \lvert \epsilon \rvert  \sup_{|\epsilon|\le\epsilon_0} \sup_{\omega \in \Omega'} \lVert h_\omega^{\epsilon} \lVert_s+\tilde De^{-\lambda n},
		\end{split}
		\]
		for  $n\in \N$. Taking $n=\big{\lfloor} \frac{\lvert \log (\lvert \epsilon \rvert) \rvert}{\lambda}\big{\rfloor}$, we conclude that~\eqref{sb} holds.
		
	\end{proof}

	\subsection{Quenched linear response for random dynamics}\label{T42}
	Observe that Theorem~\ref{sst} gives the continuity (in the appropriate sense) of the map $\epsilon \mapsto (h_\omega^\epsilon)_{\omega \in \Omega}$ in $\epsilon=0$. We are now concerned with formulating sufficient conditions under which the same map is differentiable in $\epsilon=0$.
	
	Besides requiring the existence of spaces $\mathcal B_w$ and $\mathcal B_s$ as in Subsection~\ref{QSS}, we also require the existence of a third space $\mathcal B_{ss}=(\mathcal B_{ss}, \lVert \cdot \rVert_{ss})$ that can be embedded in $\mathcal B_s$
	and such that $\lVert \cdot \rVert_s \le \lVert \cdot \rVert_{ss}$ on $\mathcal B_{ss}$. As in Subsection~\ref{QSS}, we assume that $\psi$ is a nonzero functional on $\mathcal B_s$, and we shall also assume that it admits a bounded extension to $\mathcal B_w$. We still denote its restriction (resp. extension) to $\mathcal B_{ss}$ (resp. $\mathcal B_w$) by $\psi$.
	Furthermore, we let $(\mathcal L_{\omega,\epsilon})_{\omega \in \Omega,\epsilon\in I}$ be a family such that each
	$\mathcal L_{\omega,\epsilon}$ is a bounded operator on each of those three spaces.  In addition, suppose that $\omega \mapsto \mathcal L_{\omega, \epsilon}$ is strongly measurable on both  $\mathcal B_s$ and $\mathcal B_{ss}$ for each $\epsilon \in I$.
	
	Besides~\eqref{dec}, we also require that 
	\begin{equation}\label{1011}
		\lVert \mathcal L_\omega^n h\rVert_{ss} \le De^{-\lambda n} \lVert h\rVert_{ss}, 
	\end{equation}
	for $\mathbb P$-a.e. $\omega \in \Omega$, $n\in \N$ and $h\in \mathcal B_{ss}^0$, where
	\[
	\mathcal B_{ss}^0=\{h\in \mathcal B_{ss}: \psi(h)=0\}.
	\]
	We define $\mathcal B_s^0$ and $\mathcal B_w^0$ in a similar manner. In particular, $\mathcal B_s^0$ is the same as in~\eqref{pz}.

	In addition, we also assume that there exist $C>0$, $\lambda_1 \in (0, 1)$ and   a measurable  $\Omega' \subset \Omega$ with the property that $\mathbb P(\Omega')=1$ and:
	\begin{itemize}
		\item for each $\epsilon \in I$,  $\omega \in \Omega'$, $n\in \N$ and $h\in \mathcal B_s$, \eqref{1} holds;
		\item for each $\epsilon \in I$, $\omega \in \Omega'$ and $h\in \mathcal B_s$, \eqref{2} holds; 
		\item for each $\epsilon \in I$, $\omega \in \Omega'$ and $n\in \N$, \eqref{3} holds;
		
		\item  for each $\epsilon \in I$,  $\omega \in \Omega'$, $n\in \N$ and $h\in \mathcal B_{ss}$,
		\begin{equation}\label{1x}
			\lVert \mathcal L_{\omega, \epsilon}^n h\rVert_{ss} \le C\lambda_1^n \lVert h\rVert_{ss}+C\lVert h\rVert_s;
		\end{equation}
		\item for each $\epsilon \in I$,  $\omega \in \Omega'$ and $h\in \mathcal B_{ss}$,
		\begin{equation}\label{2x}
			\lVert (\mathcal L_{\omega, \epsilon} -\mathcal L_\omega)h\rVert_{s}\le C\lvert \epsilon \rvert \lVert h\rVert_{ss};
		\end{equation}
		\item for each $\epsilon \in I$ and $\omega \in \Omega'$, we have that for $h\in \mathcal B_{s}$ (and thus also for $h\in \mathcal B_{ss}$)
		\begin{equation}\label{4x}
			\psi(\mathcal L_{\omega, \epsilon} h)=\psi(h).
		\end{equation}
	\end{itemize}
	As before we can assume that $\Omega'$ is contained in a full-measure set on which~\eqref{dec} and~\eqref{1011} hold and  that $\Omega'$ is $\sigma$-invariant.
	
	The following is a direct consequence of Proposition~\ref{922} (applied for the pairs $(\mathcal B_s, \mathcal B_{ss})$ and $(\mathcal B_w, \mathcal B_s)$).
	\begin{lemma}\label{2:56}
		There exist $\epsilon_0, D'>0$ and $\lambda' >0$ such that for $\epsilon \in I$ satisfying $\lvert \epsilon \rvert \le \epsilon_0$,  $\omega \in \Omega'$, $n\in \N$ we have that
		\begin{equation}\label{dec1x}
			\lVert \mathcal L_{\omega,\epsilon}^n h\rVert_{ss} \le D'e^{-\lambda' n} \lVert h\rVert_{ss} \quad \text{for $h\in \mathcal B_{ss}^0$,} 
		\end{equation}
		and
		\begin{equation}\label{dec1x2}
			\lVert \mathcal L_{\omega,\epsilon}^n h\rVert_{s} \le D'e^{-\lambda' n} \lVert h\rVert_{s} \quad \text{for $h\in \mathcal B_{s}^0$.} 
		\end{equation}
	\end{lemma}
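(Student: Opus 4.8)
The plan is to invoke Proposition~\ref{922} twice, once for each of the two pairs of spaces mentioned in the statement, after observing that the present assumptions translate verbatim into the hypotheses of that proposition.

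First, applied to the pair $(\mathcal B_w,\mathcal B_s)$ with the functional $\psi$, Proposition~\ref{922} is nothing but the assertion~\eqref{dec1x2}: its standing hypotheses are exactly~\eqref{dec}, \eqref{1}, \eqref{2}, \eqref{3}, \eqref{4}, all of which are in force here (and $\Omega'$ is contained in a full-measure set on which~\eqref{dec} holds). This yields~\eqref{dec1x2} with some constants $\epsilon_0', D', \lambda'>0$.

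Second, we apply Proposition~\ref{922} with $(\mathcal B_s,\mathcal B_{ss})$ in the role of $(\mathcal B_w,\mathcal B_s)$, and with $\psi$ restricted to $\mathcal B_{ss}$ in the role of $\psi$; this restriction is still a nonzero bounded functional since $\lVert\cdot\rVert_s\le\lVert\cdot\rVert_{ss}$, and it admits a bounded extension to the ``weak'' space $\mathcal B_s$, namely $\psi$ itself. Under this identification the hypothesis~\eqref{dec} becomes~\eqref{1011}, \eqref{1} becomes~\eqref{1x}, \eqref{2} becomes~\eqref{2x}, and~\eqref{4} becomes~\eqref{4x}; all of these hold by assumption, and $\Omega'$ lies in a full-measure set on which~\eqref{1011} holds. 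The analogue of~\eqref{3} required here is the uniform bound $\lVert \mathcal L_{\omega,\epsilon}^n\rVert_s\le 2C$, which is not literally one of our assumptions but follows at once from~\eqref{1}, since $\lVert \mathcal L_{\omega,\epsilon}^n h\rVert_s\le C\lambda_1^n\lVert h\rVert_s+C\lVert h\rVert_w\le 2C\lVert h\rVert_s$ using $\lVert\cdot\rVert_w\le\lVert\cdot\rVert_s$ and $\lambda_1\in(0,1)$. Hence Proposition~\ref{922} applies and delivers~\eqref{dec1x}, with some constants $\epsilon_0'', D'', \lambda''>0$ and $h$ ranging over $\mathcal B_{ss}^0$.

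It then suffices to set $\epsilon_0:=\min(\epsilon_0',\epsilon_0'')$, to enlarge $D'$ to $\max(D',D'')$ and shrink $\lambda'$ to $\min(\lambda',\lambda'')$, so that~\eqref{dec1x} and~\eqref{dec1x2} both hold with these common constants for $\lvert\epsilon\rvert\le\epsilon_0$ and $\omega\in\Omega'$. There is no genuine obstacle in this argument; the only point requiring a moment's thought is the verification, just carried out, that the uniform weak-norm bound demanded by Proposition~\ref{922} for the pair $(\mathcal B_s,\mathcal B_{ss})$ is indeed available, even though it is not among the explicitly listed hypotheses of Subsection~\ref{T42}.
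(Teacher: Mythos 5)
Your proof is correct and matches the paper's own argument, which simply states that the lemma is a direct consequence of Proposition~\ref{922} applied to the pairs $(\mathcal B_s, \mathcal B_{ss})$ and $(\mathcal B_w, \mathcal B_s)$. You went a step further and explicitly verified that the uniform $\mathcal B_s$-operator-norm bound required by Proposition~\ref{922} for the pair $(\mathcal B_s,\mathcal B_{ss})$ follows from~\eqref{1}, a small point the paper leaves implicit (it is in fact the estimate~\eqref{aux3} established inside the proof of Proposition~\ref{922}).
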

	By applying  Proposition~\ref{836} for $\mathcal B_{ss}$ instead of $\mathcal B_s$, we deduce the following result. 
	\begin{proposition}\label{836x}
		For each $\epsilon$ satisfying $\lvert \epsilon \rvert \le \epsilon_0$, there exists a unique family $(h_\omega^\epsilon)_{\omega \in \Omega'} \subset \mathcal B_{ss}$ such that:
		\begin{itemize}
			\item $\omega \mapsto h_\omega^\epsilon$ is measurable and  bounded, i.e.
			\begin{equation}\label{x2x}
				\sup_{\omega \in \Omega'} \lVert h_\omega^\epsilon \rVert_{ss}<\infty;
			\end{equation}
			\item for  $\omega \in \Omega'$,
			\begin{equation}\label{x3x}
				\psi(h_\omega^\epsilon )=1;
			\end{equation}
			\item for $\omega \in \Omega'$,
			\begin{equation}\label{x1x}
				\mathcal L_{\omega, \epsilon} h_\omega^\epsilon =h_{\sigma \omega}^\epsilon.
			\end{equation}
		\end{itemize}
	\end{proposition}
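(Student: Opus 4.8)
The plan is to deduce Proposition~\ref{836x} by reapplying the argument of Proposition~\ref{836} \emph{verbatim}, with the pair $(\mathcal B_s,\mathcal B_w)$ there replaced by the pair $(\mathcal B_{ss},\mathcal B_s)$ here. To do so one simply checks that every hypothesis used in the proof of Proposition~\ref{836} has an exact counterpart in the present, one-notch-stronger set-up: the uniform exponential decay \eqref{dec1} on the kernel of $\psi$ --- which in Proposition~\ref{836} was supplied by Proposition~\ref{922} --- is now supplied by Lemma~\ref{2:56}, namely by estimate \eqref{dec1x}; the role of \eqref{1} is played by \eqref{1x}, from which, exactly as \eqref{aux3} was derived, one gets a uniform bound $\lVert \mathcal L_{\omega,\epsilon}^n\rVert_{ss}\le 2C$ for all $n\in\N$ and $\omega\in\Omega'$; the role of \eqref{4} is played by \eqref{4x}; and $\omega\mapsto\mathcal L_{\omega,\epsilon}$ is strongly measurable on $\mathcal B_{ss}$ by assumption.

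With these ingredients in hand the construction is identical. Let $\mathcal Y_{ss}$ be the Banach space of measurable $v\colon\Omega'\to\mathcal B_{ss}$ with $\lVert v\rVert_\infty:=\sup_{\omega\in\Omega'}\lVert v(\omega)\rVert_{ss}<\infty$, and let $\mathcal Z_{ss}:=\{v\in\mathcal Y_{ss}:\psi(v(\omega))=1\ \text{for all }\omega\in\Omega'\}$. This set is nonempty: $\psi$ is nonzero on $\mathcal B_{ss}$ --- this is implicit in the standing assumptions, and holds in all our applications, where $\psi(h)=h(1)$ and the SRB measure of $T_{\omega,0}$ belongs to $\mathcal B_{ss}$ (had it failed, the conclusion \eqref{x3x} with $h_\omega^\epsilon\in\mathcal B_{ss}$ would be impossible) --- so one may pick $g\in\mathcal B_{ss}$ with $\psi(g)=1$ and take the constant function $v_0\equiv g$; and $\mathcal Z_{ss}$ is closed in $\mathcal Y_{ss}$ because $\psi$ is bounded on $\mathcal B_{ss}$. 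For $\lvert\epsilon\rvert\le\epsilon_0$ define $\mathbb L^\epsilon\colon\mathcal Y_{ss}\to\mathcal Y_{ss}$ by $(\mathbb L^\epsilon v)(\omega):=\mathcal L_{\sigma^{-1}\omega,\epsilon}v(\sigma^{-1}\omega)$; it is well defined and bounded by the bound $\lVert\mathcal L_{\omega,\epsilon}^n\rVert_{ss}\le 2C$ and strong measurability, and $\mathbb L^\epsilon\mathcal Z_{ss}\subset\mathcal Z_{ss}$ by \eqref{4x}. Choosing $N\in\N$ with $D'e^{-\lambda'N}<1$, estimate \eqref{dec1x} shows $(\mathbb L^\epsilon)^N$ is a contraction on $\mathcal Z_{ss}$ (the increments $v_1(\sigma^{-N}\omega)-v_2(\sigma^{-N}\omega)$ lie in $\mathcal B_{ss}^0$), so $\mathbb L^\epsilon$ has a unique fixed point in $\mathcal Z_{ss}$; setting $h_\omega^\epsilon$ to be its value at $\omega$ yields \eqref{x2x}, \eqref{x3x} and \eqref{x1x}. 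Uniqueness within $\mathcal B_{ss}$ follows as in Proposition~\ref{836}: any family satisfying \eqref{x2x}, \eqref{x3x}, \eqref{x1x} is a fixed point of $\mathbb L^\epsilon$ in $\mathcal Z_{ss}$.

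It remains to observe that this family is consistent with the one produced by Proposition~\ref{836}. Since $\mathcal B_{ss}$ embeds in $\mathcal B_s$ with $\lVert\cdot\rVert_s\le\lVert\cdot\rVert_{ss}$, the family just constructed is in particular a bounded measurable family in $\mathcal B_s$ satisfying \eqref{x2}, \eqref{x3}, \eqref{x1}; by the uniqueness statement in Proposition~\ref{836} it must therefore coincide with $(h_\omega^\epsilon)_{\omega\in\Omega'}$ constructed there, so no conflict of notation arises. I do not expect any genuine obstacle: the content of the proof is entirely the bookkeeping of transferring the hypotheses, the analytic work having already been carried out in Propositions~\ref{922} and~\ref{836} and in Lemma~\ref{2:56}.
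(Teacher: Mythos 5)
Your proposal is correct and takes essentially the same approach as the paper: the paper's entire proof consists of the single remark that one applies Proposition~\ref{836} with $\mathcal B_{ss}$ in place of $\mathcal B_s$ (and $\mathcal B_s$ in place of $\mathcal B_w$), using \eqref{dec1x}, \eqref{1x}, \eqref{4x} as the stand-ins for \eqref{dec1}, \eqref{1}, \eqref{4}, which is precisely what you spell out. Your closing paragraph verifying consistency with the family already produced by Proposition~\ref{836} is a sensible clarification the paper leaves implicit, but it does not constitute a different route.
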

	
	Let us now introduce some additional assumptions. We will suppose that for $\omega \in \Omega'$, there exists a bounded linear operator $\hat{\mathcal L}_\omega \colon \mathcal B_{ss} \to \mathcal B_s$, admitting a bounded extension (which will also be denoted by $\hat\L_{\omega}$) from $\mathcal B_s$ to $\mathcal B_w$, and such that:
	
	\begin{equation}\label{qx}
		\left \{
		\begin{aligned}
			\sup_{\omega \in \Omega'} \lVert \hat{\mathcal L}_\omega\rVert_{\mathcal B_{ss} \to \mathcal B_s}&<\infty,\\
			\sup_{\omega \in \Omega'} \lVert \hat{\mathcal L}_\omega\rVert_{\mathcal B_{s} \to \mathcal B_w}&<\infty,
		\end{aligned}
		\right.
	\end{equation}
	and  we suppose that there is a function $\alpha:I\to\mathbb R_+$, $\lim_{\epsilon \to 0} \alpha(\epsilon)=0$ such that for $\omega \in \Omega'$, 
	\begin{equation}\label{lim}
		\bigg{\lVert } \frac{1}{\epsilon}(\mathcal L_{\omega, \epsilon}-\mathcal L_\omega)h-\hat{\mathcal L}_\omega h \bigg{\rVert}_{w}\le \alpha(\epsilon)\|h\|_{ss} \quad \text{for $h\in \mathcal B_{ss}$ and $\epsilon \in I\setminus \{0\}$.}
	\end{equation}
	
	We emphasize that the inequality \eqref{lim} only holds in $\mathcal{B}_w$-topology. Obviously, $\hat{\mathcal L}_\omega \mathcal B_{ss}^0\subset \mathcal B_s^0$, for  $\omega \in \Omega'$, but it also follows from \eqref{lim} and boundedness of $\psi$ on $\mathcal B_w$ that $\hat\L_\omega:\mathcal B_{ss}\to\mathcal B_s^0$.
	
	Finally, we assume that for $\omega \in \Omega'$ and every $n\in \N$, 
	\begin{equation}\label{3x}
		\lVert \mathcal L_{\omega}^ nh\rVert_{w}\le D'e^{-\lambda'n}\|h\|_w \quad \text{for $h\in \mathcal B_{w}^0$.}
	\end{equation}
	
	We continue to denote $h_\omega^0$ simply by $h_\omega$. For $\omega \in \Omega'$, set 
	\begin{equation}\label{EQ2}
		\hat{h}_\omega:=\sum_{j=0}^\infty \mathcal L_{\sigma^{-j}\omega}^j \hat{\mathcal L}_{\sigma^{-(j+1)} \omega}h_{\sigma^{-(j+1)}\omega}.
	\end{equation}
	It follows from~\eqref{dec}, \eqref{x2x}, \eqref{qx} and the previous discussion that $\hat{h}_\omega \in \mathcal B_s^0$ for  $\omega \in \Omega'$. In addition,
	\begin{equation}\label{sup0}
		\sup_{\omega \in \Omega'}\lVert \hat h_\omega\rVert_s <\infty. 
	\end{equation}
	
	The following is our linear response result.
	\begin{theorem}\label{1043}
		We have that 
		\begin{equation}\label{eq:linresp}
			\lim_{\epsilon \to 0}\sup_{\omega \in \Omega'}\bigg{\lVert} \frac{1}{\epsilon}(h_\omega^{\epsilon}-h_\omega) -\hat{h}_\omega \bigg{\rVert}_{w}=0.
		\end{equation}
		
	\end{theorem}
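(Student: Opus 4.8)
The plan is to prove \eqref{eq:linresp} by the pullback/telescoping scheme already used for Theorem~\ref{sst}, now keeping track of the first-order term; throughout write $\omega_{-k}:=\sigma^{-k}\omega$. The one genuinely new ingredient, and the only delicate step, is to first upgrade statistical stability one rung up the scale of spaces. Theorem~\ref{sst} is stated for $(\mathcal{B}_s,\mathcal{B}_w)$, but its proof applies verbatim with $(\mathcal{B}_{ss},\mathcal{B}_s)$ in place of $(\mathcal{B}_s,\mathcal{B}_w)$: \eqref{1011}, \eqref{1x}, \eqref{2x} and \eqref{4x} play the roles of \eqref{dec}, \eqref{1}, \eqref{2} and \eqref{4}; the bound $\lVert\mathcal{L}_{\omega,\epsilon}^n\rVert_s\le 2C$ (immediate from \eqref{1}, since $\lambda_1<1$ and $\lVert\cdot\rVert_w\le\lVert\cdot\rVert_s$) plays the role of \eqref{3}; Lemma~\ref{2:56} supplies the decay of the perturbed cocycle on $\mathcal{B}_{ss}^0$; and Lemma~\ref{prop:905}, applied with $(\mathcal{B}_s,\mathcal{B}_{ss})$ in place of $(\mathcal{B}_w,\mathcal{B}_s)$, gives $\sup_{|\epsilon|\le\epsilon_0}\sup_\omega\lVert h_\omega^\epsilon\rVert_{ss}<\infty$. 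Hence Theorem~\ref{sst} yields
\[
\beta(\epsilon):=\sup_{\omega\in\Omega'}\lVert h_\omega^\epsilon-h_\omega\rVert_s\le C|\epsilon|\,|\log(|\epsilon|)|\longrightarrow 0\qquad(\epsilon\to 0),
\]
i.e. $\mathcal{B}_s$-statistical stability; this is not available from Theorem~\ref{sst} alone, because the perturbation operator is controlled by \eqref{2} only as a map $\mathcal{B}_s\to\mathcal{B}_w$.

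Next I would telescope. From $h_\omega^\epsilon-h_\omega=\mathcal{L}_{\omega_{-n},\epsilon}^n h_{\omega_{-n}}^\epsilon-\mathcal{L}_{\omega_{-n}}^n h_{\omega_{-n}}$, expanding the difference of the two $n$-fold products by the standard telescoping (keeping the unperturbed operators to the left of each single-step perturbation difference), then using equivariance \eqref{x1x} to collapse the remaining perturbed products, $\mathcal{L}_{\omega_{-n},\epsilon}^{n-i-1}h_{\omega_{-n}}^\epsilon=h_{\omega_{-i-1}}^\epsilon$, and finally dividing by $\epsilon$ and subtracting the series \eqref{EQ2} for $\hat h_\omega$ (re-indexed by $i$), one gets
\[
\frac{1}{\epsilon}(h_\omega^\epsilon-h_\omega)-\hat h_\omega=\sum_{i=0}^{n-1}\mathcal{L}_{\omega_{-i}}^i\Bigl[\tfrac{1}{\epsilon}(\mathcal{L}_{\omega_{-i-1},\epsilon}-\mathcal{L}_{\omega_{-i-1}})h_{\omega_{-i-1}}^\epsilon-\hat{\mathcal{L}}_{\omega_{-i-1}}h_{\omega_{-i-1}}\Bigr]+\frac{1}{\epsilon}\mathcal{L}_{\omega_{-n}}^n(h_{\omega_{-n}}^\epsilon-h_{\omega_{-n}})-\sum_{i=n}^{\infty}\mathcal{L}_{\omega_{-i}}^i\hat{\mathcal{L}}_{\omega_{-i-1}}h_{\omega_{-i-1}}.
\]
Since $h_{\omega_{-n}}^\epsilon-h_{\omega_{-n}}\in\mathcal{B}_s^0$ and each $\hat{\mathcal{L}}_{\omega_{-i-1}}h_{\omega_{-i-1}}\in\mathcal{B}_s^0$ with $\mathcal{B}_s$-norm bounded uniformly in $\omega$ (by \eqref{qx} and $\sup_\omega\lVert h_\omega\rVert_{ss}<\infty$, see \eqref{x2x}), estimate \eqref{dec} bounds the last two terms in $\mathcal{B}_w$ by $De^{-\lambda n}|\epsilon|^{-1}\beta(\epsilon)$ and $\mathrm{const}\cdot e^{-\lambda n}$ respectively; choosing $n=n(\epsilon):=\lceil 2|\log(|\epsilon|)|/\lambda\rceil$, so that $e^{-\lambda n}\le|\epsilon|^2$, makes both tend to $0$ uniformly in $\omega\in\Omega'$ (for the first term, mere boundedness of $\beta$, from Lemma~\ref{prop:905}, already suffices).

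It remains to control the finite sum. For each $i$, split the bracket as $(A)_i+(B)_i$, with $(A)_i:=\bigl[\tfrac{1}{\epsilon}(\mathcal{L}_{\omega_{-i-1},\epsilon}-\mathcal{L}_{\omega_{-i-1}})-\hat{\mathcal{L}}_{\omega_{-i-1}}\bigr]h_{\omega_{-i-1}}$ and $(B)_i:=\tfrac{1}{\epsilon}(\mathcal{L}_{\omega_{-i-1},\epsilon}-\mathcal{L}_{\omega_{-i-1}})(h_{\omega_{-i-1}}^\epsilon-h_{\omega_{-i-1}})$. Both belong to $\mathcal{B}_w^0$: $\psi$ annihilates $(\mathcal{L}_{\omega,\epsilon}-\mathcal{L}_\omega)g$ by \eqref{4}, and $\hat{\mathcal{L}}_\omega$ maps $\mathcal{B}_{ss}$ into $\mathcal{B}_s^0$ (the remark after \eqref{lim}). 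Hence the unperturbed decay \eqref{3x} on $\mathcal{B}_w^0$ applies to $\mathcal{L}_{\omega_{-i}}^i(A)_i$ and $\mathcal{L}_{\omega_{-i}}^i(B)_i$; bounding $\lVert(A)_i\rVert_w$ by \eqref{lim} (using $\sup_\omega\lVert h_\omega\rVert_{ss}<\infty$) and $\lVert(B)_i\rVert_w$ by \eqref{2}, and summing the geometric series in $i$, gives
\[
\Bigl\lVert\sum_{i=0}^{n-1}\mathcal{L}_{\omega_{-i}}^i(A)_i\Bigr\rVert_w\le\frac{D'\sup_\omega\lVert h_\omega\rVert_{ss}}{1-e^{-\lambda'}}\,\alpha(\epsilon),\qquad\Bigl\lVert\sum_{i=0}^{n-1}\mathcal{L}_{\omega_{-i}}^i(B)_i\Bigr\rVert_w\le\frac{D'C}{1-e^{-\lambda'}}\,\beta(\epsilon),
\]
uniformly in $\omega\in\Omega'$; the first tends to $0$ because $\alpha(\epsilon)\to 0$ by hypothesis, the second because $\beta(\epsilon)\to 0$ by the first step. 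Combining these two bounds with the two remainder bounds via the triangle inequality in the displayed identity proves \eqref{eq:linresp}.

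I expect the main obstacle to be precisely the treatment of $(B)_i$: the naive telescoping leaves a term in which $h_\omega^\epsilon-h_\omega$ is composed with the perturbation difference, which by \eqref{2} can only absorb a $\mathcal{B}_s$-norm, so one genuinely needs $\mathcal{B}_s$-statistical stability — and the right way to obtain it is to feed the abstract machinery of Subsection~\ref{QSS} one level higher in the nested triple $\mathcal{B}_{ss}\subset\mathcal{B}_s\subset\mathcal{B}_w$. Once $\beta(\epsilon)\to 0$ is in hand, everything else is routine bookkeeping with the two exponential rates $\lambda,\lambda'$ and the logarithmic cutoff $n(\epsilon)$.
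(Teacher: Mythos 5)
Your proof is correct and, modulo cosmetic bookkeeping, follows the same strategy as the paper: telescope the pullback identity, split the bracket into an ``$\alpha(\epsilon)$-small'' part and a ``statistical-stability'' part, apply the exponential decay on $\mathcal B_w^0$ from \eqref{3x} to sum the series, and obtain the needed $\mathcal B_s$-statistical stability by running the abstract Theorem~\ref{sst} one rung up, for the pair $(\mathcal B_{ss},\mathcal B_s)$ --- which is exactly what the paper does via \eqref{355}. The only (immaterial) differences are that the paper derives an infinite-series identity \eqref{EQ1} and compares it directly to \eqref{EQ2}, avoiding your explicit $n$-dependent remainder, and it splits the bracket as $\bigl(\tfrac{1}{\epsilon}\tilde{\mathcal L}-\hat{\mathcal L}\bigr)h^\epsilon+\hat{\mathcal L}(h^\epsilon-h)$ rather than your $\bigl(\tfrac{1}{\epsilon}\tilde{\mathcal L}-\hat{\mathcal L}\bigr)h+\tfrac{1}{\epsilon}\tilde{\mathcal L}(h^\epsilon-h)$; both splits reduce the second piece to an $O(\beta(\epsilon))$ bound, via $\lVert\hat{\mathcal L}\rVert_{\mathcal B_s\to\mathcal B_w}$ in the paper and via \eqref{2} in yours.
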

	
	\begin{proof}
		Let us begin by introducing some auxiliary notation.  Set
		\[
		\tilde{h}^{\epsilon}_\omega:=h_\omega^{\epsilon}-h_\omega  \quad \text{and} \quad \tilde{\mathcal L}_{\omega, \epsilon}:=\mathcal L_{\omega, \epsilon}-\mathcal L_\omega. 
		\]
		It follows easily from~\eqref{x1x} that
		\[
		\tilde{h}_\omega^{\epsilon}-\mathcal L_{\sigma^{-1} \omega} \tilde{h}_{\sigma^{-1} \omega}^{\epsilon} =\tilde{\mathcal L}_{\sigma^{-1} \omega, \epsilon} h_{\sigma^{-1} \omega}^\epsilon,
		\]
		and thus 
		\begin{equation}\label{EQ1}
			\tilde{h}_\omega^{\epsilon}=\sum_{j=0}^\infty \mathcal L_{\sigma^{-j} \omega}^ j\tilde{\mathcal L}_{\sigma^{-(j+1)}\omega, \epsilon} h_{\sigma^{-(j+1)} \omega}^\epsilon,
		\end{equation}
		for $\omega \in \Omega'$.
		By~\eqref{EQ2} and~\eqref{EQ1}, we have that 
		\begin{equation}\label{cent}
			\begin{split}
				\bigg{\lVert} \frac{1}{\epsilon}\tilde{h}_\omega^{\epsilon} -\hat{h}_\omega \bigg{\rVert}_{w} &=\bigg{\lVert}\frac{1}{\epsilon}\sum_{j=0}^\infty \mathcal L_{\sigma^{-j} \omega}^ j\tilde{\mathcal L}_{\sigma^{-(j+1)}\omega, \epsilon} h_{\sigma^{-(j+1)} \omega}^\epsilon-\hat{h}_\omega \bigg{\rVert}_{w} \\
				&\le \bigg{\lVert}\sum_{j=0}^\infty\mathcal L_{\sigma^{-j} \omega}^j \bigg{(} \frac{1}{\epsilon} \tilde{\mathcal L}_{\sigma^{-(j+1)}\omega, \epsilon} -\hat{\mathcal L}_{\sigma^{-(j+1)} \omega} \bigg{)}h_{\sigma^{-(j+1)}\omega}^\epsilon \bigg{\rVert}_{w} \\
				&\phantom{\le}+\bigg{\lVert} \sum_{j=0}^\infty\mathcal L_{\sigma^{-j} \omega}^j\hat{\mathcal L}_{\sigma^{-(j+1)} \omega} \bigg{(}h_{\sigma^{-(j+1)}\omega}^\epsilon-h_{\sigma^{-(j+1)}\omega}\bigg{)} \bigg{\rVert}_{w}.
			\end{split}
		\end{equation}
		By applying Lemma \ref{prop:905}, we have 
		\[\sup_{|\epsilon|\le\epsilon_0}\sup_{\omega \in \Omega'}\|h_\omega^\epsilon\|_{ss}<\infty. \] 
		This, together with  \eqref{lim} and  \eqref{3x}  implies that 
		\begin{equation}\label{357c}
			\begin{split}
				& \bigg{\lVert}\sum_{j=0}^\infty\mathcal L_{\sigma^{-j} \omega}^j \bigg{(} \frac{1}{\epsilon} \tilde{\mathcal L}_{\sigma^{-(j+1)}\omega, \epsilon} -\hat{\mathcal L}_{\sigma^{-(j+1)} \omega} \bigg{)}h_{\sigma^{-(j+1)}\omega}^\epsilon \bigg{\rVert}_{w} \\  
				&\le \sum_{j=0}^\infty  D'e^{-\lambda' j}\bigg{\lVert}  \bigg{(} \frac{1}{\epsilon} \tilde{\mathcal L}_{\sigma^{-(j+1)}\omega, \epsilon} -\hat{\mathcal L}_{\sigma^{-(j+1)} \omega} \bigg{)}h_{\sigma^{-(j+1)}\omega}^\epsilon \bigg{\rVert}_w\\
				&\le \tilde D\alpha(\epsilon)\sup_{|\epsilon|\le\epsilon_0}\sup_{\omega\in\Omega'}\|h_\omega^\epsilon\|_{ss},
			\end{split}
		\end{equation}
		for $\omega \in \Omega'$, where $\tilde D>0$ doesn't depend on $\omega$ and $\epsilon$.
		On the other hand, we have by \eqref{qx} and \eqref{3x} that 
		\[
		\begin{split}
			& \bigg{\lVert} \sum_{j=0}^\infty\mathcal L_{\sigma^{-j} \omega}^j\hat{\mathcal L}_{\sigma^{-(j+1)} \omega} \bigg{(}h_{\sigma^{-(j+1)}\omega}^\epsilon-h_{\sigma^{-(j+1)}\omega}\bigg{)} \bigg{\rVert}_{w} \\
			&\le \sum_{j=0}^{\infty}D'e^{-\lambda'j}\bigg{\lVert} \hat{\mathcal L}_{\sigma^{-(j+1)} \omega} \left(h_{\sigma^{-(j+1)}\omega}^\epsilon-h_{\sigma^{-(j+1)}\omega} \right) \bigg{\rVert}_w \\
			&\le\sup_{\omega \in \Omega'}\|\hat\L_\omega\|_{\mathcal B_s\to \mathcal B_w}\sum_{j=0}^{\infty}D'e^{-\lambda'j}\bigg{\lVert} h_{\sigma^{-(j+1)}\omega}^\epsilon-h_{\sigma^{-(j+1)}\omega}\bigg{\rVert}_s.
		\end{split}
		\]
		
		Now, our assumptions ensure that we may apply Theorem \ref{sst} for the pair $(\mathcal B_{s},\mathcal B_{ss})$. Hence, we get
		\begin{equation}\label{355}
			\bigg{\lVert}\sum_{j=0}^\infty\mathcal L_{\sigma^{-j} \omega}^j\hat{\mathcal L}_{\sigma^{-(j+1)} \omega}\left(h_{\sigma^{-(j+1)}\omega}^\epsilon- h_{\sigma^{-(j+1)}\omega}\right)\bigg{\rVert}_w\le C'|\epsilon||\log|\epsilon|
		\end{equation}
		for  $\omega \in \Omega'$, where $C'>0$ is independent on $\omega$ and $\epsilon$.
		It follows readily from~\eqref{cent}, \eqref{357c} and~\eqref{355} that~\eqref{eq:linresp} holds, which completes the proof of the theorem.
	\end{proof}

	\begin{remark}\label{rem:MET}
		The purpose of this remark is to interpret Theorem~\ref{sst} (as well as Theorem~\ref{1043})  in the context of the multiplicative ergodic theory. 
		In order to do so, we first need to introduce two additional assumptions. Namely, we require that:
		\begin{itemize}
			\item  $\mathcal B_s$ is separable;
			\item the inclusion $\mathcal B_s \hookrightarrow \mathcal B_w$ is compact. 
		\end{itemize}
		By $\Lambda (\epsilon)\in \mathbb R\cup \{-\infty\}$,  we will denote the \emph{largest Lyapunov exponent} of the cocycle $(\mathcal L_{\omega, \epsilon})_{\omega \in \Omega}$, for $\epsilon \in I$. We stress that the existence of $\Lambda (\epsilon)$ is a direct consequence of~\eqref{1} (applied to $n=1$) and the subadditive  ergodic theorem. Moreover, we recall that 
		\[
		\Lambda(\epsilon)=\lim_{n\to \infty} \frac 1 n \log \lVert \mathcal L_{\omega, \epsilon}^n \rVert_s, \quad \text{for $\mathbb P$-a.e $\omega \in \Omega$.}
		\]
		By using~\eqref{1} together with Proposition~\ref{836}, it is easy to show (see the proof of~\cite[Lemma 3.5.]{DFGTV}) that $\Lambda (\epsilon)=0$, for $\epsilon \in I$ with $|\epsilon| \le \epsilon_0$. Moreover, for each such $\epsilon$, the cocycle $(\mathcal L_{\omega, \epsilon})_{\omega \in \Omega}$ is quasi-compact (in the sense of~\cite[Definition 2.7.]{GTQ}). Hence, it follows from the multiplicative ergodic theorem (see~\cite[Theorem A.]{GTQ}) that for each $\epsilon \in I$ with $|\epsilon| \le \epsilon_0$, there exists:
		\begin{itemize} \item  $1\le l=l(\epsilon)\le \infty$ and a sequence of \emph{exceptional
				Lyapunov exponents}
			\[0=\Lambda (\epsilon)=\lambda_1(\epsilon)>\lambda_2(\epsilon)>\ldots>\lambda_l(\epsilon)>\kappa (\epsilon)\] or in the case
			$l=\infty$, 
			\[0=\Lambda (\epsilon)=\lambda_1(\epsilon)>\lambda_2(\epsilon)>\ldots \quad  \text{with
				$\lim_{n\to\infty} \lambda_n(\epsilon)=\kappa (\epsilon)$};
			\]
			\item  a unique measurable Oseledets splitting
			$$\mathcal{B}_s=\left(\bigoplus_{j=1}^l Y_j^\epsilon(\omega)\right)\oplus V^\epsilon(\omega),$$
			where each component of the splitting is equivariant under $\mathcal{L}_{\omega, \epsilon}$, that is,
			$\mathcal L_{\omega, \epsilon}(Y_j^\epsilon(\omega))= Y_j^\epsilon(\sigma\omega)$ and
			$\mathcal L_{\omega, \epsilon}(V^\epsilon(\omega))\subset V^\epsilon(\sigma\omega)$. The subspaces $Y_j^\epsilon(\omega)$ are finite-dimensional and for each $y\in Y_j^\epsilon(\omega)\setminus\{0\}$,
			\[\lim_{n\to\infty}\frac 1n\log\|\mathcal L_{\omega, \epsilon}^n y\|=\lambda_j(\epsilon).\]
			Moreover, for $y\in V(\omega)$, $\lim_{n\to\infty}\frac 1n\log\|\mathcal
			L_{\omega, \epsilon}^n y\|\le \kappa(\epsilon)$.
		\end{itemize}
		
		It follows easily from Proposition~\ref{922} (see the proof of~\cite[Proposition 3.6.]{DFGTV}) that $Y_1^\epsilon (\omega)$ is one-dimensional and is  spanned by $h_\omega^\epsilon$, for each $\epsilon \in I$ such that $|\epsilon| \le \epsilon_0$.
		%
		%
		
		Hence, Theorem~\ref{sst} can be interpreted as a regularity result for the top-Oseledets space of $(\mathcal L_{\omega, \epsilon})_{\omega \in \Omega}$. Namely, it shows that it is continuous in appropriate sense in $\epsilon=0$.  Taking into account that Lyapunov exponents and corresponding Oseledets subspaces represent nonautonomous versions of the classical notions of
		an eigenvalue and the corresponding eigenspace, we conclude that Theorem~\ref{sst} is a natural extension of statistical stability results concerned with deterministic systems.  In a similar manner, Theorem~\ref{1043} can be viewed as a nonautonomous version of the linear response results.
	\end{remark}
	\section{Proof of the Main Theorem}\label{PMT}
	In this section, we prove Theorem \ref{thm:main}, by showing that the assumptions of our abstract Theorems \ref{sst} and \ref{1043} are satisfied. 
	
	We place ourselves in the context of Subsection \ref{subsec:reghyp}: we fix a small enough interval $0\in I\subset \mathbb R$, and we consider a $C^s$ mapping $\mathcal T:I\to C^{r+1}(M,M)$, such that $T_0:=\mathcal T(0)(\cdot)$ is a $C^{r+1}$, transitive Anosov diffeomorphism. 
	\par\noindent We now let $\Delta>0$ and consider $V:=B_{C^s(I,C^{r+1}(M,M))}(\mathcal T,\Delta)$. One has the following lemma:
	\begin{lemma}\label{lemma:controversial}
		There exists $C>0$, depending only on $\mathcal T$ and $\Delta$, such that for any $\mathcal S\in V$, any $\epsilon\in I$,
		\begin{equation}
			d_{C^{r+1}}(S_\epsilon,S_0)\le C|\epsilon|.
		\end{equation}
	\end{lemma}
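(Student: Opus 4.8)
The plan is to reduce everything to the chart representations, where $C^{r+1}(M,M)$ is locally a genuine Banach space of $\mathbb{R}^d$-valued maps, and then to apply the elementary mean value inequality, exploiting that $s>1$.

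First I would recall, from the set-up of Subsection~\ref{subsec:reghyp}, that since $\mathcal{S}\in V$ all the maps $S_\epsilon$ ($\epsilon\in I$) and $T_0$ can be read in the same charts $(\psi_i)_{i=1,\dots,N}$, with admissible index sets $\mathcal{J}(i)=\mathcal{J}_{\mathcal{S}}(i)$ independent of $\epsilon$ and of $\mathcal{S}$. Writing $(S_\epsilon)_{ij}:=\psi_j^{-1}\circ S_\epsilon\circ\psi_i:(-r_i,r_i)^d\to\mathbb{R}^d$ and $E_{ij}:=C^{r+1}((-r_i,r_i)^d,\mathbb{R}^d)$, one then has
\[
d_{C^{r+1}}(S_\epsilon,S_0)=\sum_{i=1}^N\sum_{j\in\mathcal{J}(i)}\|(S_\epsilon)_{ij}-(S_0)_{ij}\|_{C^{r+1}},
\]
and, by the way the metric on $C^s(I,C^{r+1}(M,M))$ near $\mathcal{T}$ (and hence the ball $V$) is built, $\epsilon\mapsto(S_\epsilon)_{ij}$ is a $C^s$ curve into the Banach space $E_{ij}$, with
\[
\|(S)_{ij}\|_{C^s(I,E_{ij})}\le\|(T)_{ij}\|_{C^s(I,E_{ij})}+\Delta
\]
(these $C^s$-norms being finite since $I$ may be taken bounded and $\mathcal{T}$ is $C^s$ on it).

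The second step is the one-variable estimate. Since $s>1$, each curve $\epsilon\mapsto(S_\epsilon)_{ij}$ is in particular $C^1$ with values in $E_{ij}$, so $(S_\epsilon)_{ij}-(S_0)_{ij}=\int_0^\epsilon\partial_t(S_t)_{ij}\,dt$ in $E_{ij}$ and therefore
\[
\|(S_\epsilon)_{ij}-(S_0)_{ij}\|_{C^{r+1}}\le|\epsilon|\sup_{t\in I}\|\partial_t(S_t)_{ij}\|_{C^{r+1}}\le|\epsilon|\,\|(S)_{ij}\|_{C^s(I,E_{ij})}.
\]
Summing over the finitely many admissible pairs $(i,j)$ and inserting the bound on $\|(S)_{ij}\|_{C^s(I,E_{ij})}$ from the previous step gives $d_{C^{r+1}}(S_\epsilon,S_0)\le C|\epsilon|$ with $C:=\sum_{i=1}^N\sum_{j\in\mathcal{J}(i)}\big(\|(T)_{ij}\|_{C^s(I,E_{ij})}+\Delta\big)$, which depends only on $\mathcal{T}$ and $\Delta$, the charts, their radii and the Lebesgue number $\delta$ being fixed once and for all.

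The only genuinely delicate point — and the reason the set-up is arranged so carefully — is implicit in the first step: $C^{r+1}(M,M)$ is not a vector space, so one must make precise the meaning of a $C^s$ curve into $C^{r+1}(M,M)$ and of a ball in $C^s(I,C^{r+1}(M,M))$, and check that differentiation in $\epsilon$ commutes with passing to chart representatives. This is exactly what the common charts and the fixed combinatorial data $\mathcal{J}(i)$ provide: near $\mathcal{T}$, the metric on $C^s(I,C^{r+1}(M,M))$ is, by the same definition used for $d_{C^{r+1}}$, the sum over admissible $(i,j)$ of the $C^s(I,E_{ij})$-distances between chart representatives, so one is reduced to the elementary Banach-space computation above. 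Granted that reduction, I expect no further obstacle.
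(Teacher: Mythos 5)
Your proof is correct and takes essentially the same route as the paper's: decompose $d_{C^{r+1}}(S_\epsilon,S_0)$ into chart representatives, write $S_{ij}(\epsilon,\cdot)-S_{ij}(0,\cdot)=\int_0^\epsilon\partial_\eta S_{ij}(\eta,\cdot)\,d\eta$ using $s>1$, and bound the integrand uniformly by a constant depending only on $\mathcal T$ and $\Delta$. Your extra paragraph making explicit why one may reduce to a genuine Banach-space computation in charts is a helpful clarification the paper leaves implicit, but it does not change the argument.
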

	\begin{proof}
		From the discussion in Subsection \ref{subsec:reghyp}, it follows that for any $\mathcal S\in V$,
		\[d_{C^{r+1}}(S_\epsilon,S_0)=\sum_{i=1}^N\sum_{j\in\mathcal J(i)}\|S_{ij}(\epsilon,\cdot)-S_{ij}(0,\cdot)\|_{C^{r+1}}.\]
		where we denoted $S_{ij}(\epsilon,\cdot)=\psi_j^{-1}\circ S_\epsilon\circ\psi_i$ for $j\in\mathcal J(i)$. From the mean value theorem, one gets $S_{ij}(\epsilon,\cdot)-S_{ij}(0,\cdot)=\int_0^\epsilon\partial_\epsilon S_{ij}(\eta,\cdot) d\eta$, and hence
		\begin{align*}
			\|S_{ij}(\epsilon,\cdot)-S_{ij}(0,\cdot)\|_{C^{r+1}}&\le\int_0^\epsilon\|\partial_\epsilon S_{ij}(\eta,\cdot)\|_{C^{r+1}}d\eta\\
			&\le C(\mathcal T,\Delta)|\epsilon| 
		\end{align*}
		from which the conclusion follows.
	\end{proof}
	We will consider the following triplet of Banach spaces:
	\begin{equation}\label{Triplet}
		\mathcal B_{ss}=\mathcal B^{3,1}(T_0,M) \hookrightarrow \mathcal B_s=\mathcal B^{2,2}(T_0,M) \hookrightarrow \mathcal B_w=\mathcal B^{1,3}(T_0,M).
	\end{equation}
	We consider a measurable map  $\mathbf{T}:\Omega\to V$, and we write $T_{\omega,\epsilon}=\mathbf{T}(\omega)(\epsilon)(\cdot)$. 
	Finally, we let $\psi$ be defined by $\psi(h)=h(1)$, which is a bounded functional on all three spaces in~\eqref{Triplet}.
	\begin{proof}[Proof of Theorem \ref{thm:main}]
		\par\noindent
		\begin{enumerate}
			\item By Lemma \ref{lemma:controversial} we have, for $\epsilon>0$, that $d_{C^{r+1}}(T_{\omega},T_{\omega,\epsilon})\le C|\epsilon|$, with $C$ independent of $\epsilon$ and $\omega$. Hence,  \cite[Lemma 7.1]{GL} implies that  \eqref{2} and \eqref{2x} hold.
			\item Since $T$ is transitive, the deterministic transfer operator associated to $T$ has a spectral gap on all three spaces $\mathcal B_{ss}, \mathcal B_s$ and $\mathcal B_w$.\footnote{Observe that $\mathcal B_w$ is compactly embedded into $\mathcal B^{0,4}$.}
			Consequently, it follows from~\cite[Proposition 2.10]{CR} that by shrinking $\delta$ is necessary, we have that \eqref{dec}, \eqref{1011} and \eqref{3x} hold. 
			\item The uniform Lasota-Yorke inequalities \eqref{1}, \eqref{3} and \eqref{1x} may be established arguing as in~\cite[Section 3.2]{DFGTV} or~\cite[Section 7]{GL}.
			\item By arguments analogous to those in~\cite[Subsection 3.1.]{DFGTV}, one has that the cocycle $(\mathcal L_{\omega, \epsilon})_{\omega \in \Omega}$ is strongly measurable on $\mathcal B_s$ and $\mathcal B_{ss}$.
		\end{enumerate}
		The previous argument are enough to apply Proposition \ref{836} and Theorem \ref{sst} to our situation, giving us an equivariant family $(h_{\omega}^\epsilon)_{\omega \in \Omega} \subset \mathcal B_{ss}$, that satisfies our statistical stability estimate \eqref{sb} with respect to the norm $\|\cdot \|_{2,2}$. We note that (see~\cite[Proposition 3.3.]{DFGTV}) that for $\epsilon \in I$, $h_{\omega}^\epsilon$ is actually a positive probability  measure on $M$ for $\mathbb P$-a.e. $\omega \in \Omega$.
		
		\medskip
		
		What is now left to do is to establish the existence and required properties of the ``derivative operator''. Following the lines of \cite[Section 9]{GL}, we will systematically abuse notations and ignore coordinates charts.

		Denote by $g_\omega(\epsilon,\cdot):=\frac{1}{|\det(DT_{\omega,\epsilon})|}$ the weight of the transfer operator $\L_{\omega,\epsilon}$. Under our assumptions, when viewed in coordinates, the maps $\epsilon\mapsto g_\omega(\epsilon,\cdot)\in C^r(M,\R^\ast)$ and $\epsilon\mapsto T_{\omega, \epsilon}(\cdot)^{-1}$ are of class $C^s$, $s>1$. In particular, we may, for $\phi\in C^r(M,\R)$, differentiate $\L_{\omega,\epsilon}\phi$ w.r.t $\epsilon$ and obtain:
		\begin{align}
			\partial_\epsilon[\L_{\omega,\epsilon}\phi]&=\L_{\omega,\epsilon}\left(J_{\omega,\epsilon}\phi+v_{\omega,\epsilon}\phi\right)\\
			\partial_\epsilon^2[\L_{\omega,\epsilon}\phi]&=\L_{\omega,\epsilon}\left(J_{\omega,\epsilon}^2\phi+J_{\omega,\epsilon}(v_{\omega,\epsilon}\phi)+v_{\omega,\epsilon}(J_{\omega,\epsilon}\phi)+v_{\omega,\epsilon}(v_{\omega,\epsilon}\phi)+[\partial_\epsilon J_{\omega,\epsilon}]\cdot \phi+\partial_{\epsilon}[v_{\omega,\epsilon}\phi]\right),
		\end{align}
		where
		\begin{align}\label{eq:formaldiff}
			v_{\omega,\epsilon}\phi&:=-D\phi(\cdot) \cdot [DT_{\omega,\epsilon}(\cdot)]^{-1}\cdot\partial_\epsilon T_{\omega}(\epsilon,\cdot)\\
			J_{\omega,\epsilon}&:=\frac{\partial_\epsilon g_\omega(\epsilon,\cdot)+v_{\omega,\epsilon}g_{\omega}(\epsilon,\cdot)}{g_\omega(\epsilon,\cdot)}.
		\end{align}	
		Note that both of the above expressions are, together with their first $s$-derivatives w.r.t. $\epsilon$, in $C^{r-1}(M,\R)$. We will also denote by $v_{\omega,\epsilon}$ the $C^r$ vector field associated with the operator $v_{\omega,\epsilon}$. As noted in Section \ref{Pre}, multiplication by $J_{\omega,\epsilon}$ and the action of $v_{\omega,\epsilon}$ induce bounded operator from $\B^{i,j}$ to itself (resp. $\B^{i,j}$ to $\B^{i-1,j+1}$), where $i+j<r$, and the same goes for their derivatives w.r.t. $\epsilon$.
		\par Furthermore, by our Assumption \ref{hyp:reg}, $J_{\omega,\epsilon}$ and $v_{\omega,\epsilon}$, as well as their derivatives w.r.t. $\epsilon$, are bounded uniformly in $\omega$ and $\epsilon$, i.e
		\begin{align*}
			\max\left(\sup_{\omega\in\Omega}\sup_{\epsilon\in I}\|J_{\omega,\epsilon}\|_{C^{r-1}},\sup_{\omega\in\Omega}\sup_{\epsilon\in I}\|\partial_\epsilon J_{\omega,\epsilon}\|_{C^{r-1}}\right)&<\infty\\
			\max\left(\sup_{\omega\in\Omega}\sup_{\epsilon\in I}\|v_{\omega,\epsilon}\|_{C^r},\sup_{\omega\in\Omega}\sup_{\epsilon\in I}\|\partial_\epsilon v_{\omega,\epsilon}\|_{C^r}\right)&<\infty
		\end{align*}
		For $\phi\in C^r(M,\R)$, set
		\begin{equation}
			\hat\L_\omega\phi:=\partial_\epsilon[\L_{\omega,\epsilon}\phi] \big{\rvert}_{\epsilon=0}=\L_{\omega}\left(J_{\omega,0}\phi+v_{\omega,0}\phi\right).
		\end{equation}
		By our previous discussion,  we conclude that~\eqref{qx} holds.

		On the other hand, using Taylor's formula we conclude that  for $|\epsilon|$ small enough, 
		\begin{equation*}
			\L_{\omega,\epsilon}\phi-\L_{\omega}\phi-\epsilon\hat\L_\omega\phi=\int_0^\epsilon\int_0^\eta \partial_\epsilon^2[\L_{\omega,\epsilon}\phi] \big{\rvert}_{\epsilon=\xi}\, d\xi \, d\eta.
		\end{equation*}	
		By \eqref{eq:formaldiff} and the discussion below, \[\|\partial_\epsilon^2[\L_{\omega,\epsilon}\phi] \big{\rvert}_{\epsilon=\xi}\|_{w}\le C\|\phi\|_{ss},\]
		where $C>0$ independent of both $\omega$ and $\epsilon$. Hence \eqref{lim} is satisfied, and we may apply Theorem \ref{1043}, which gives us that the map $\epsilon\in I\mapsto h_{\omega}^\epsilon\in L^\infty(\Omega,\B_w)$ is differentiable at $\epsilon=0$. Moreover,  \begin{equation}\label{hath}
			\hat h_\omega:=[\partial_\epsilon h_{\omega}^\epsilon] \big{\rvert}_{\epsilon=0}=\sum_{n=0}^\infty\L^{(n)}_{\sigma^{-n}\omega}\hat \L_{\sigma^{-n-1}\omega}h_{\sigma^{-n-1}\omega}.\end{equation}

		To obtain \eqref{eq:quenchedlinresp}, we notice that, by density of smooth functions in $\B^{i,j}$ and \eqref{lim}, $\hat\L_\omega$, as a bounded operator from $\B^{i,j}$ to $\B^{i-1,j+1}$, admits the representation\footnote{In fact, this formula defines a bounded operator from $\mathcal D'_j$ to $\mathcal D'_{j+1}$, but we won't need it.}
		\[(\hat\L_\omega f)(\phi):=f(\partial_\epsilon[\phi\circ T_{\omega,\epsilon}] \rvert_{\epsilon=0}),\]
		for any $f\in \B^{i,j}$ and $\phi\in C^r(M, \R)$. 
		Then, for $\phi\in C^{r}(M,\R)$ we have that 
		\begin{align*}
			\partial_\epsilon\left[\int_M\phi dh_{\omega}^\epsilon\right] \bigg{\rvert}_{\epsilon=0}&=\partial_\epsilon\left[h_{\omega}^\epsilon(\phi)\right] \rvert_{\epsilon=0}\\
			&=\hat h_\omega(\phi)\\
			&=\sum_{n=0}^\infty\L^{(n)}_{\sigma^{-n}\omega}\hat \L_{\sigma^{-n-1}\omega}h_{\sigma^{-n-1}\omega}(\phi)\\
			&=\sum_{n=0}^\infty \hat \L_{\sigma^{-n-1}\omega}h_{\sigma^{-n-1}\omega}(\phi\circ T^{(n)}_{\sigma^{-n}\omega})\\
			&=\sum_{n=0}^\infty h_{\sigma^{-n-1}\omega}\left(\partial_\epsilon\left[\phi\circ T^{(n)}_{\sigma^{-n}\omega}\circ T_{\sigma^{-n-1}\omega,\epsilon}\right] \bigg{\rvert}_{\epsilon=0}\right),
		\end{align*} 
		which gives \eqref{eq:quenchedlinresp}. This completes the proof of Theorem~\ref{thm:main}.
	\end{proof}

	\section{Applications}\label{sec:app}
	In this section, we will present two applications of our main result. Let us assume that the assumptions in Hypothesis~\ref{hyp:reg} hold. 
	We consider the triplet of spaces given by~\eqref{Triplet}.  Furthermore, for $\epsilon \in I$ sufficiently close to $0$, let $(h_\omega^\epsilon)_{\omega \in \Omega}\subset \mathcal B_{ss}$ be as in Section~\ref{PMT}.  By shrinking $I$ if necessary, we can assume that $h_\omega^\epsilon$ exists for $\epsilon \in I$
	and  $\omega \in \Omega$.
	Moreover, recall that  $h_\omega^\epsilon$ is a probability measure on $M$ for  
	$\omega \in \Omega$ (see Section~\ref{PMT}). As before, we write $h_\omega$ instead of $h_\omega^0$.
	\subsection{Annealed linear response for hyperbolic dynamics}
	As a first application, we establish a form of an annealed linear response. 
	
	For $F \in L^\infty(\Omega,C^r(M))$ and $\epsilon\in I$, we set 
	\begin{equation}\label{eq:R}
		R(\epsilon, F)=\int_\Omega \int_M F(\omega,x)\, dh_{\omega}^\epsilon(x) \, d \mathbb P(\omega).
	\end{equation} 
	The following is our annealed linear response result. 
	\begin{theorem}\label{thm:annealedresp}
		The map $R:I\times L^\infty(\Omega,C^r(M))\to\R$ is differentiable at every $(0, F)$, $F \in L^\infty(\Omega,C^r(M))$. Furthermore, one has
		\begin{align}\label{eq:annealedresp}
			\partial_\epsilon[R(\epsilon,F)] \big \rvert_{\epsilon=0}&=\sum_{n=0}^\infty\int_\Omega\int_M \partial_\epsilon \left[F_\omega\circ T^{(n)}_{\sigma^{-n}\omega}\circ T_{\sigma^{-n-1}\omega,\epsilon}\right] \bigg{\rvert}_{\epsilon=0}\, dh_{\sigma^{-n-1}\omega} \, d \mathbb P(\omega).
		\end{align}
	\end{theorem}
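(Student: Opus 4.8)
The strategy is to obtain the statement by integrating the quenched linear response of Theorem~\ref{1043} (equivalently, the formula \eqref{eq:quenchedlinresp} established in the proof of Theorem~\ref{thm:main}) over $\Omega$, the decisive input being that the convergence there is \emph{uniform} in $\omega$. Fix $F\in L^\infty(\Omega,C^r(M))$. Since $R(\epsilon,\cdot)$ is linear in its second argument for each fixed $\epsilon$, the directional derivative of $R$ at $(0,F)$ in a direction $(0,G)$ is exactly $R(0,G)$, so the only candidate for the Fréchet derivative of $R$ at $(0,F)$ is the bounded linear map $(\eta,G)\mapsto \eta\cdot\partial_\epsilon R(\epsilon,F)\big{\rvert}_{\epsilon=0}+R(0,G)$. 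Thus what has to be shown is: (i) the partial derivative $\partial_\epsilon R(\epsilon,F)\big{\rvert}_{\epsilon=0}$ exists and equals the right-hand side of \eqref{eq:annealedresp}; and (ii) with this choice the first-order remainder is $o(|\eta|+\|G\|_{L^\infty(\Omega,C^r(M))})$.

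For (i), write $\frac1\epsilon\big(R(\epsilon,F)-R(0,F)\big)=\int_\Omega \frac1\epsilon\big(h_\omega^\epsilon-h_\omega\big)(F_\omega)\,d\mathbb P(\omega)$. As $r>4$ and elements of $\mathcal B_w=\mathcal B^{1,3}$ are distributions of order at most $3$, one has $|g(F_\omega)|\le C\|g\|_w\|F\|_{L^\infty(\Omega,C^r(M))}$ for every $g\in\mathcal B_w$; together with Theorem~\ref{1043}, which gives $\sup_{\omega\in\Omega'}\big\|\frac1\epsilon(h_\omega^\epsilon-h_\omega)-\hat h_\omega\big\|_w\to0$, this yields $\frac1\epsilon(h_\omega^\epsilon-h_\omega)(F_\omega)\to\hat h_\omega(F_\omega)$ uniformly in $\omega\in\Omega'$, and since $\mathbb P(\Omega')=1$, uniform convergence on a probability space gives $\partial_\epsilon R(\epsilon,F)\big{\rvert}_{\epsilon=0}=\int_\Omega \hat h_\omega(F_\omega)\,d\mathbb P(\omega)$. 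It then remains to unfold $\hat h_\omega(F_\omega)$: for each $\omega\in\Omega'$ this is exactly \eqref{eq:quenchedlinresp} applied with the (now $\omega$-dependent, but for fixed $\omega$ perfectly admissible) test function $\phi=F_\omega\in C^r(M)$, i.e.
\[ \hat h_\omega(F_\omega)=\sum_{n=0}^\infty \int_M \partial_\epsilon\big[F_\omega\circ T^{(n)}_{\sigma^{-n}\omega}\circ T_{\sigma^{-n-1}\omega,\epsilon}\big]\big{\rvert}_{\epsilon=0}\,dh_{\sigma^{-n-1}\omega}; \]
alternatively one re-runs the last display in the proof of Theorem~\ref{thm:main}, using \eqref{hath}, the duality $(\mathcal L_T h)(\varphi)=h(\varphi\circ T)$ and the identity $(\hat{\mathcal L}_\omega f)(\phi)=f(\partial_\epsilon[\phi\circ T_{\omega,\epsilon}]\big{\rvert}_{\epsilon=0})$ proved there. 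Each summand is bounded by $De^{-\lambda n}\sup_{\omega}\|\hat{\mathcal L}_\omega\|_{\mathcal B_{ss}\to\mathcal B_s}\sup_{\omega}\|h_\omega\|_{ss}\|F\|_{L^\infty(\Omega,C^r(M))}$ by \eqref{dec}, \eqref{qx} and \eqref{x2x}, a summable, $\omega$-uniform bound, so Fubini--Tonelli lets me interchange $\sum_n$ and $\int_\Omega$ and reach \eqref{eq:annealedresp}; the same bound shows the series on the right-hand side of \eqref{eq:annealedresp} is well defined (and $\omega\mapsto\hat h_\omega(F_\omega)$ is measurable, being the pairing of the measurable maps $\omega\mapsto\hat h_\omega\in\mathcal B_s$ and $\omega\mapsto F_\omega\in C^3(M)$).

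For (ii), decompose the first-order remainder as $\big(R(\epsilon,F)-R(0,F)-\epsilon\,\partial_\epsilon R(\epsilon,F)\big{\rvert}_{\epsilon=0}\big)+\big(R(\epsilon,F+G)-R(\epsilon,F)-R(0,G)\big)$; by linearity in the second variable the second bracket equals $R(\epsilon,G)-R(0,G)$. The first bracket is $o(|\epsilon|)$ by (i). For the second, using the statistical stability estimate \eqref{sb} of Theorem~\ref{sst},
\[ |R(\epsilon,G)-R(0,G)|=\Big|\int_\Omega (h_\omega^\epsilon-h_\omega)(G_\omega)\,d\mathbb P\Big|\le C\Big(\sup_{\omega\in\Omega'}\|h_\omega^\epsilon-h_\omega\|_w\Big)\|G\|_{L^\infty(\Omega,C^r(M))}\le C|\epsilon||\log|\epsilon||\,\|G\|_{L^\infty(\Omega,C^r(M))}, \]
so dividing by $|\epsilon|+\|G\|_{L^\infty(\Omega,C^r(M))}$ and using $\|G\|/(|\epsilon|+\|G\|)\le1$ bounds the quotient by $C|\epsilon||\log|\epsilon||\to0$. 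Hence the remainder is $o(|\epsilon|+\|G\|)$ and $R$ is Fréchet differentiable at $(0,F)$ with the derivative displayed above, whose $\epsilon$-component is \eqref{eq:annealedresp}.

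I do not expect a serious conceptual obstacle here: everything reduces to Theorems~\ref{sst} and~\ref{1043} plus routine estimates. The one point that requires care is the uniformity in $\omega$, since it is precisely what makes the two interchanges legitimate — the limit/integral swap in step (i), and, after unfolding $\hat h_\omega$, the $\sum_n$/$\int_\Omega$ swap — and this is exactly what the $\sup_{\omega\in\Omega'}$-bounds in those theorems and in \eqref{qx}, \eqref{x2x} are designed to provide; the remaining work is bookkeeping with the full-measure $\sigma$-invariant set $\Omega'$ and the observation that the order of the distributions in $\mathcal B_w$ (namely $3$) is strictly below $r$, so $C^r(M)$ observables pair continuously with everything in sight.
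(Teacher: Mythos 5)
Your proposal is correct and follows essentially the same route as the paper: the same candidate Fréchet derivative $(\eta,G)\mapsto \eta\int_\Omega \hat h_\omega(F_\omega)\,d\mathbb P+\int_\Omega h_\omega(G_\omega)\,d\mathbb P$, the same splitting of the remainder into a term controlled by Theorem~\ref{1043} and one controlled by Theorem~\ref{sst}, and the same unfolding of $\hat h_\omega$ via duality to reach \eqref{eq:annealedresp}. You merely spell out the $\sum_n/\int_\Omega$ interchange and the $o(|\epsilon|+\|G\|)$ bookkeeping that the paper leaves implicit (referring the reader to the proof of \eqref{eq:quenchedlinresp}), so the two arguments coincide in substance.
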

	
	\begin{remark}
		The previous result can be interpreted as \emph{linear response for the stationary measure of the skew-product} \[S_\epsilon(\omega,x):=(\sigma\omega,T_{\omega,\epsilon}x),\]
		acting on $\Omega\times M$. Indeed, the stationary measure $\mu_\epsilon$ of this skew-product classically admits the disintegration along fibers
		\[\mu_\epsilon(A\times B)=\int_{A} h^\epsilon_{\omega}(B)\,d\mathbb P(\omega),\]
		for measurable $A\subset \Omega$, $B\subset M$.
		In particular, this justifies the ``annealed" terminology, since in the i.i.d case, the measure defined on $M$ by $\tilde\mu_\epsilon(\cdot)=\mu_\epsilon(\Omega\times\cdot)$ corresponds to the invariant measure of the Markov chain associated with our cocycle. 
		\\We also point out that one may use this interpretation to establish linear response for a class of \emph{deterministic partially hyperbolic skew-products}: let us set $\Omega=\mathbb S^1$, $\mathbb P=\text{Lebesgue}$, and $\sigma(\omega)=\omega+\alpha \mod 1$ for some $\alpha\in\R\backslash\mathbb Q$. Then, consider a family $(T_{\omega,\epsilon})_{\omega\in\mathbb S^1,\epsilon\in I}$ of Anosov diffeomorphisms of $\mathbb T^2$, e.g.
		\[T_{\omega,\epsilon}(x_1,x_2):= 
		\begin{pmatrix}
			2 & 1\\
			1 & 1
		\end{pmatrix}
		\begin{pmatrix}
			x_1\\
			x_2
		\end{pmatrix}
		+
		\begin{pmatrix}
			\omega\\
			\omega
		\end{pmatrix}
		+\epsilon
		\begin{pmatrix}
			\sin 2\pi x_1\\
			\sin2\pi x_2
		\end{pmatrix}
		\]
		This system clearly satisfies our Hypothesis \ref{hyp:reg} (notice that it belongs to the class of Examples \ref{EX3}), and the skew-product $S_\epsilon$ acting on $\mathbb S^1\times\mathbb T^2\simeq \mathbb T^3$ is clearly a partially hyperbolic system (with central direction tangent to the first coordinate), exhibiting linear response by Theorem \ref{thm:annealedresp} and the previous discussion.
	\end{remark}
	
	\begin{proof}[Proof of Theorem \ref{thm:annealedresp}] 
		Fix an arbitrary $F_0 \in L^\infty(\Omega,C^r(M))$. We claim that the derivative of $R$ in $(0, F_0)$ is given by
		\begin{equation}\label{DL}
			DR(0, F_0)(\epsilon, H)=\epsilon\int_\Omega \hat h_\omega (F_0(\omega))\, d\mathbb P(\omega)+\int_\Omega h_\omega( H(\omega))\, d\mathbb P(\omega), 
		\end{equation}
		for $(\epsilon, H)\in \R \times L^\infty(\Omega,C^r(M))$, where $\hat h_\omega$ is given by~\eqref{hath}.  Indeed, observe that 
		\[
		\begin{split}
			& R(\epsilon, F_0+H)-R(0, F_0)-\epsilon\int_\Omega \hat h_\omega (F_0(\omega))\, d\mathbb P(\omega)-\int_\Omega h_\omega( H(\omega))\, d\mathbb P(\omega) \\
			&=\int_\Omega (h_\omega^\epsilon-h_\omega-\epsilon \hat h_\omega )(F_0(\omega))\, d\mathbb P(\omega)+\int_\Omega (h_\omega^\epsilon-h_\omega)(H(\omega))\, d\mathbb P(\omega).
		\end{split}
		\]
		Furthermore, the continuous embedding $\mathcal B^{p,q}\hookrightarrow \mathcal D'_q$ entails that there is $C>0$ (independent on both $\omega$ and $\epsilon$) such that
		\[
		\bigg |\frac{1}{\epsilon} \int_\Omega (h_\omega^\epsilon-h_\omega-\epsilon \hat h_\omega )(F_0(\omega))\, d\mathbb P(\omega) \bigg | \le C \lVert F_0\rVert_{L^\infty(\Omega,C^r(M))} \cdot \sup_{\omega \in \Omega}\bigg{\lVert} \frac{1}{\epsilon}(h_\omega^{\epsilon}-h_\omega) -\hat{h}_\omega \bigg{\rVert}_{w},
		\]
		and thus Theorem~\ref{1043} implies that 
		\[
		\lim_{\epsilon \to 0} \frac{1}{\epsilon} \int_\Omega (h_\omega^\epsilon-h_\omega-\epsilon \hat h_\omega )(F_0(\omega))\, d\mathbb P(\omega)=0.
		\]
		In addition, 
		\[
		\bigg |\int_\Omega (h_\omega^\epsilon-h_\omega)(H(\omega))\, d\mathbb P(\omega)\bigg | \le C\lVert H\rVert_{L^\infty(\Omega,C^r(M))} \cdot \sup_{\omega \in \Omega} \|h_\omega^\epsilon-h_\omega \|_w, 
		\]
		and consequently by applying  Theorem \ref{sst} (for the pair $(\mathcal B_{s},\mathcal B_{w})$), we obtain that 
		\[
		\lim_{(\epsilon,H)\to (0,0)} \frac{1}{\lVert H\rVert_{L^\infty(\Omega,C^0(M))}}\bigg |\int_\Omega (h_\omega^\epsilon-h_\omega)(H(\omega))\, d\mathbb P(\omega)\bigg |=0.
		\]
		Thus, \eqref{DL} holds and the proof of the theorem is completed. In order to establish~\eqref{eq:annealedresp}, one can argue as in  the proof of formula \eqref{eq:quenchedlinresp}.
	\end{proof}
	\subsection{Regularity of the variance in the central limit theorem for random hyperbolic dynamics}\label{sec:var}
	In this section, we provide an application of Theorem~\ref{1043} to the problem of the regularity of the variance (under suitable perturbations) in the quenched version of the central limit theorem for random hyperbolic dynamics. 
	
	Let $F$ be as in the previous subsection. For $\omega \in \Omega$ and $\epsilon \in I$, set
	\[f_{\omega,\epsilon}:= F_\omega- h_{\omega}^\epsilon (F_\omega)=F_\omega -\int_M F_\omega \, dh_\omega^\epsilon.\] 
	Set 
	\begin{equation}\label{def:var}
		\Sigma^2_\epsilon :=\int_\Omega\int_M f_{\omega,\epsilon}^2(x)dh_{\omega}^\epsilon(x)d\mathbb P(\omega)+2\sum_{n=1}^\infty\int_\Omega\int_M f_{\omega,\epsilon}(x)f_{\sigma^n\omega,\epsilon}(T^n_{\omega,\epsilon}x)dh_{\omega}^\epsilon(x)d\mathbb P(\omega).
	\end{equation}
	Observe that $\Sigma^2_\epsilon \ge 0$ and that $\Sigma^2_\epsilon$ doesn't depend on $\omega$. 
	It is proved in~\cite[Theorem B]{DFGTV} that if $\Sigma_\epsilon^2>0$,  the process $(f_{\omega,\epsilon}\circ T^n_{\omega,\epsilon})$ satisfies $\mathbb P$-a.s a quenched central limit theorem. More precisely,  for every bounded and continuous $\phi:\mathbb R\to\mathbb R$ and $\mathbb P$-a.e $\omega\in\Omega$, we have that 
	\[\lim_{n\to\infty}\int \phi\left(\frac{S_n(f_{\omega,\epsilon})}{\sqrt n}\right )  dh_{\omega}^\epsilon=\int\phi d\mathcal N(0,\Sigma^2_\epsilon), \]
	where \[S_n(f_{\omega, \epsilon}):=\sum_{k=0}^{n-1}f_{\sigma^k\omega, \epsilon}\circ T_{\omega,\epsilon}^k,\]
	and $\mathcal N(0, \Sigma^2_\epsilon)$ denotes the normal distribution with parameters $0$ and $\Sigma_\epsilon$.
	Our goal  is to establish the following result. 
	\begin{theorem}\label{variance}
		Under the above assumptions, the map $\epsilon\mapsto\Sigma^2_\epsilon$ is differentiable at $\epsilon=0$.
	\end{theorem}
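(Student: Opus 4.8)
The plan is to express $\Sigma^2_\epsilon$ through the transfer operator cocycle and differentiate, invoking Theorems~\ref{sst} and~\ref{1043} (for the pairs $(\mathcal B_w,\mathcal B_s)$ and $(\mathcal B_s,\mathcal B_{ss})$, exactly as in Section~\ref{PMT}). Write $c_{\omega,\epsilon}:=h_\omega^\epsilon(F_\omega)$, so that $f_{\omega,\epsilon}=F_\omega-c_{\omega,\epsilon}$, and put $\nu_{\omega,\epsilon}:=f_{\omega,\epsilon}\,h_\omega^\epsilon$. Since $r>4$, multiplication by a $C^r$ function is bounded on $\mathcal B_{ss}=\mathcal B^{3,1}$, so $\nu_{\omega,\epsilon}\in\mathcal B_{ss}$; and --- the crucial consequence of the centering --- $\psi(\nu_{\omega,\epsilon})=(\nu_{\omega,\epsilon})(1)=h_\omega^\epsilon(f_{\omega,\epsilon})=0$, hence $\nu_{\omega,\epsilon}\in\mathcal B_{ss}^0$ with $\sup_{\omega,|\epsilon|\le\epsilon_0}\|\nu_{\omega,\epsilon}\|_{ss}<\infty$ by Lemma~\ref{prop:905}. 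Using the duality $(\mathcal L_{\omega,\epsilon}^n\mu)(\varphi)=\mu(\varphi\circ T_{\omega,\epsilon}^n)$, the $\sigma$-invariance of $\mathbb P$, and Lemma~\ref{2:56} (which gives $\|\mathcal L_{\omega,\epsilon}^n\nu\|_{ss}\le D'e^{-\lambda'n}\|\nu\|_{ss}$ on $\mathcal B_{ss}^0$, ensuring convergence of all series in sight), one rewrites
\[
\Sigma^2_\epsilon=\int_\Omega h_\omega^\epsilon(f_{\omega,\epsilon}^2)\,d\mathbb P+2\int_\Omega W_\omega^\epsilon(f_{\omega,\epsilon})\,d\mathbb P,\qquad W_\omega^\epsilon:=\sum_{n\ge1}\mathcal L_{\sigma^{-n}\omega,\epsilon}^n\,\nu_{\sigma^{-n}\omega,\epsilon}\in\mathcal B_{ss}^0,
\]
with $\sup_{\omega,|\epsilon|\le\epsilon_0}\|W_\omega^\epsilon\|_{ss}<\infty$. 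The first integrand equals $h_\omega^\epsilon(F_\omega^2)-c_{\omega,\epsilon}^2$; as $F_\omega^2\in C^r$ pairs continuously with $\mathcal B_w=\mathcal B^{1,3}$, Theorem~\ref{1043} makes $\epsilon\mapsto h_\omega^\epsilon(F_\omega^2)$ and $\epsilon\mapsto c_{\omega,\epsilon}$ differentiable at $0$, uniformly in $\omega$ and with uniformly bounded derivatives; hence the first integral is differentiable at $\epsilon=0$.

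So everything reduces to showing that $\epsilon\mapsto W_\omega^\epsilon$ is differentiable at $0$ in $\mathcal B_w$, uniformly in $\omega$, with $\sup_\omega\|\hat W_\omega\|_w<\infty$ for the derivative $\hat W_\omega$: the theorem then follows by the product rule (note $\partial_\epsilon f_{\omega,\epsilon}|_0=-\hat h_\omega(F_\omega)$ is a scalar, so $W_\omega^0(\partial_\epsilon f_{\omega,\epsilon}|_0)=-\hat h_\omega(F_\omega)\,\psi(W_\omega^0)=0$) together with uniform convergence of the difference quotients under $\int_\Omega$. Before that, record three preliminary facts. First, $\epsilon\mapsto\nu_{\omega,\epsilon}$ is differentiable at $0$ in $\mathcal B_w$, uniformly in $\omega$, with derivative $\hat\nu_\omega:=f_{\omega,0}\hat h_\omega-\hat h_\omega(F_\omega)h_\omega$; moreover $\psi(\hat\nu_\omega)=0$, $\hat\nu_\omega\in\mathcal B_s^0$, $\sup_\omega\|\hat\nu_\omega\|_s<\infty$, and $\|\nu_{\omega,\epsilon}-\nu_{\omega,0}\|_s\le C|\epsilon|\,|\log|\epsilon||$ by Theorem~\ref{sst}. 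Second, arguing exactly as in Lemma~\ref{2:56} but with the auxiliary space $\mathcal B^{0,4}$ (legitimate since $r>4$) one has $\|\mathcal L_{\omega,\epsilon}^n h\|_w\le D'e^{-\lambda'n}\|h\|_w$ for $h\in\mathcal B_w^0$, $|\epsilon|\le\epsilon_0$, $\omega\in\Omega'$. Third, combining this with the telescoping identity $\mathcal L_{\omega,\epsilon}^n-\mathcal L_\omega^n=\sum_{k=1}^n\mathcal L_{\sigma^k\omega,\epsilon}^{n-k}(\mathcal L_{\sigma^{k-1}\omega,\epsilon}-\mathcal L_{\sigma^{k-1}\omega})\mathcal L_\omega^{k-1}$, with~\eqref{2} and~\eqref{dec}, gives the refined bound $\|(\mathcal L_{\omega,\epsilon}^n-\mathcal L_\omega^n)g\|_w\le C|\epsilon|\,n\,e^{-\mu n}\|g\|_s$ for $g\in\mathcal B_s^0$, where $\mu:=\min(\lambda,\lambda')$ --- retaining both the factor $|\epsilon|$ and the exponential decay in $n$, which is what the argument needs.

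To differentiate $W_\omega^\epsilon$, write each summand as $\mathcal L_{\sigma^{-n}\omega,\epsilon}^n\nu_{\sigma^{-n}\omega,\epsilon}-\mathcal L_{\sigma^{-n}\omega}^n\nu_{\sigma^{-n}\omega,0}=(\mathrm{i})+(\mathrm{ii})+(\mathrm{iii})$ with $(\mathrm i)=\mathcal L_{\sigma^{-n}\omega}^n(\nu_{\sigma^{-n}\omega,\epsilon}-\nu_{\sigma^{-n}\omega,0})$, $(\mathrm{ii})=(\mathcal L_{\sigma^{-n}\omega,\epsilon}^n-\mathcal L_{\sigma^{-n}\omega}^n)\nu_{\sigma^{-n}\omega,0}$, $(\mathrm{iii})=(\mathcal L_{\sigma^{-n}\omega,\epsilon}^n-\mathcal L_{\sigma^{-n}\omega}^n)(\nu_{\sigma^{-n}\omega,\epsilon}-\nu_{\sigma^{-n}\omega,0})$. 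By the refined bound and Theorem~\ref{sst}, $\|(\mathrm{iii})\|_w\le C|\epsilon|^2|\log|\epsilon||\,n\,e^{-\mu n}$, which is $o(\epsilon)$ after $\sum_n$. For $(\mathrm i)$: since $\mathcal L_\omega^n$ contracts on $\mathcal B_w^0$ by~\eqref{3x} and $\nu_{\sigma^{-n}\omega,\epsilon}-\nu_{\sigma^{-n}\omega,0}-\epsilon\hat\nu_{\sigma^{-n}\omega}=o(\epsilon)$ in $\mathcal B_w$ uniformly, $(\mathrm i)=\epsilon\,\mathcal L_{\sigma^{-n}\omega}^n\hat\nu_{\sigma^{-n}\omega}+\rho_n$ with $\|\mathcal L_{\sigma^{-n}\omega}^n\hat\nu_{\sigma^{-n}\omega}\|_w\le Ce^{-\lambda'n}$ and $\|\rho_n\|_w\le D'e^{-\lambda'n}\,o(\epsilon)$ uniformly. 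The main work is $(\mathrm{ii})$: telescope it as in the preliminary, isolate the first-order part $\epsilon\,\Phi_n\nu_{\sigma^{-n}\omega,0}$, $\Phi_n:=\sum_{k=1}^n\mathcal L_{\sigma^{-n+k}\omega}^{n-k}\hat{\mathcal L}_{\sigma^{-n+k-1}\omega}\mathcal L_{\sigma^{-n}\omega}^{k-1}$, and bound the remainder by the two families $\mathcal L_{\sigma^{-n+k}\omega,\epsilon}^{n-k}(\mathcal L_{\sigma^{-n+k-1}\omega,\epsilon}-\mathcal L_{\sigma^{-n+k-1}\omega}-\epsilon\hat{\mathcal L}_{\sigma^{-n+k-1}\omega})\mathcal L_{\sigma^{-n}\omega}^{k-1}\nu_{\sigma^{-n}\omega,0}$ and $(\mathcal L_{\sigma^{-n+k}\omega,\epsilon}^{n-k}-\mathcal L_{\sigma^{-n+k}\omega}^{n-k})\,\epsilon\,\hat{\mathcal L}_{\sigma^{-n+k-1}\omega}\mathcal L_{\sigma^{-n}\omega}^{k-1}\nu_{\sigma^{-n}\omega,0}$; using $\|\mathcal L_{\omega,\epsilon}-\mathcal L_\omega-\epsilon\hat{\mathcal L}_\omega\|_{\mathcal B_{ss}\to\mathcal B_w}\le C\epsilon^2$ (which is~\eqref{lim} with $\alpha(\epsilon)=C|\epsilon|$, established in the proof of Theorem~\ref{thm:main}), the decay of the cocycle on $\mathcal B_{ss}^0$, $\mathcal B_s^0$ and $\mathcal B_w^0$, the refined bound, and $\hat{\mathcal L}_\omega:\mathcal B_{ss}\to\mathcal B_s^0$ bounded uniformly (here again one uses that $\nu_{\sigma^{-n}\omega,0}\in\mathcal B_{ss}^0$), one obtains, after $\sum_k$, a remainder of $\mathcal B_w$-norm $\le C\epsilon^2\,n^2e^{-\mu n}$, while $\|\Phi_n\nu_{\sigma^{-n}\omega,0}\|_w\le Cn e^{-\mu n}$.

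Summing over $n\ge1$ then gives $W_\omega^\epsilon-W_\omega^0=\epsilon\,\hat W_\omega+o(\epsilon)$ in $\mathcal B_w$ uniformly in $\omega$, where $\hat W_\omega:=\sum_{n\ge1}\big(\mathcal L_{\sigma^{-n}\omega}^n\hat\nu_{\sigma^{-n}\omega}+\Phi_n\nu_{\sigma^{-n}\omega,0}\big)$ converges in $\mathcal B_w$ (summands $\le Ce^{-\lambda'n}+Cne^{-\mu n}$) with $\sup_\omega\|\hat W_\omega\|_w<\infty$; this is the required differentiability of $W^\epsilon$, and the theorem follows. I expect the main obstacle to be term $(\mathrm{ii})$: one must estimate $(\mathcal L_{\sigma^{-n}\omega,\epsilon}^n-\mathcal L_{\sigma^{-n}\omega}^n-\epsilon\Phi_n)\nu_{\sigma^{-n}\omega,0}$ by a quantity that is \emph{simultaneously} summable in $n$ and $o(\epsilon)$, and this is possible only because the centering $f_{\omega,\epsilon}=F_\omega-h_\omega^\epsilon(F_\omega)$ forces $\nu_{\omega,\epsilon}$ and all its increments into the subspaces $\mathcal B_\bullet^0$, on which both the perturbed and the unperturbed cocycles decay exponentially, and because the telescoping estimates are arranged so as to keep a power of $|\epsilon|$ alongside each exponential factor.
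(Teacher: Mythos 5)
Your proof is correct and follows essentially the same route as the paper: both reduce the matter to the centered densities $f_{\omega,\epsilon}h_\omega^\epsilon\in\mathcal B_{ss}^0$, split the difference quotient of the correlation terms into a density-variation part, a cocycle-variation part handled by telescoping with \eqref{lim} and the refined bound keeping $|\epsilon|$ together with exponential decay on the kernels of $\psi$, and a test-function part that vanishes or is controlled by Theorem~\ref{1043}, concluding by summability in $n$. Your only departures are cosmetic: you shift indices by $\sigma$-invariance, package the series as a single $\mathcal B_w$-valued map $W_\omega^\epsilon$ differentiated uniformly in $\omega$ (where the paper argues term by term and invokes dominated convergence), and isolate an explicit cross term $(\mathrm{iii})$ instead of the paper's term $(I)$.
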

	We start the proof by making  few remarks related to the map $\epsilon\mapsto (f_{\omega,\epsilon})_{\omega \in \Omega}\in C^r(M)$:
	\begin{itemize}
		\item For each $\epsilon$, $\omega \mapsto f_{\omega, \epsilon}$  is an element of $L^\infty(\Omega,C^r(M))$. Moreover,  by Lemma~\ref{prop:905} we have that 
		\begin{equation}\label{bnd}
			\sup_{|\epsilon|\le \epsilon_0}\esssup_{\omega\in \Omega}\|f_{\omega,\epsilon}\|_{C^r}\leq (1+\sup_{|\epsilon|\le\epsilon_0}\esssup_{\omega\in\Omega}\|h_\omega^\epsilon\|_{ss})\esssup_{\omega\in\Omega}\|F_\omega\|_{C^r}.
		\end{equation}
		\item it is differentiable at $\epsilon=0$. Indeed, we have 
		\[\frac{1}{\epsilon}\left( f_{\omega,\epsilon}-f_\omega\right)= \frac{1}{\epsilon} ( h_\omega-h_\omega^\epsilon) (F_\omega),\]
		which yields
		\begin{equation}\label{TET} 
			\esssup_{\omega\in\Omega}\left|\frac{1}{\epsilon}\left( f_{\omega,\epsilon}-f_\omega\right)+\hat h_\omega (F_\omega)\right|\to 0,  \end{equation}
		as $\epsilon\to 0$, via Theorem \ref{1043}. Here, we write $f_\omega$ instead of $f_{\omega, 0}$.
	\end{itemize}
	
	The above observations together with Theorem~\ref{1043} easily imply the following lemma.
	\begin{lemma}
		The map
		\[
		\epsilon \mapsto \int_\Omega\int_M f_{\omega,\epsilon}^2(x)dh_{\omega}^\epsilon(x)d\mathbb P(\omega)
		\]
		is differentiable at $\epsilon=0$.
	\end{lemma}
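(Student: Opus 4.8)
The plan is to differentiate $G(\epsilon):=\int_\Omega\int_M f_{\omega,\epsilon}^2(x)\,dh_\omega^\epsilon(x)\,d\mathbb P(\omega)$ by estimating its difference quotient directly, the key observation being that $f_{\omega,\epsilon}-f_\omega$ is a \emph{constant} function of $x\in M$. For $\omega\in\Omega'$ and $|\epsilon|\le\epsilon_0$ set $c_{\omega,\epsilon}:=(h_\omega-h_\omega^\epsilon)(F_\omega)=h_\omega(F_\omega)-h_\omega^\epsilon(F_\omega)\in\mathbb R$, so that $f_{\omega,\epsilon}=f_\omega+c_{\omega,\epsilon}$. First I would split, for each $\omega$,
\[
h_\omega^\epsilon(f_{\omega,\epsilon}^2)-h_\omega(f_\omega^2)=\big(h_\omega^\epsilon(f_{\omega,\epsilon}^2)-h_\omega^\epsilon(f_\omega^2)\big)+\big((h_\omega^\epsilon-h_\omega)(f_\omega^2)\big)=:(A_\omega)+(B_\omega),
\]
all pairings being legitimate because $f_{\omega,\epsilon}^2,f_\omega^2\in C^r(M)\subset C^3(M)$ (as $r>4$), while $h_\omega^\epsilon-h_\omega\in\mathcal B_w=\mathcal B^{1,3}$ consists of distributions of order at most $3$ and $h_\omega^\epsilon\in\mathcal B_{ss}$ is a probability measure. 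All the $\omega$-integrals below make sense thanks to the measurability and uniform boundedness of $\omega\mapsto h_\omega^\epsilon$ and $\omega\mapsto f_{\omega,\epsilon}$ already established.

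For $(A_\omega)$: since $f_{\omega,\epsilon}^2-f_\omega^2=c_{\omega,\epsilon}\,(f_{\omega,\epsilon}+f_\omega)$ with $c_{\omega,\epsilon}$ a scalar, and since $h_\omega^\epsilon(f_{\omega,\epsilon})=0$ (because $h_\omega^\epsilon$ is a probability measure and $f_{\omega,\epsilon}=F_\omega-h_\omega^\epsilon(F_\omega)$) whereas $h_\omega^\epsilon(f_\omega)=h_\omega^\epsilon(F_\omega)-h_\omega(F_\omega)=-c_{\omega,\epsilon}$, one gets the clean identity $(A_\omega)=c_{\omega,\epsilon}\big(h_\omega^\epsilon(f_{\omega,\epsilon})+h_\omega^\epsilon(f_\omega)\big)=-c_{\omega,\epsilon}^2$. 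By Theorem~\ref{sst} applied to the pair $(\mathcal B_s,\mathcal B_w)$, together with the uniform bound~\eqref{bnd} on $\|F_\omega\|_{C^r}$, we have $\sup_{\omega\in\Omega'}|c_{\omega,\epsilon}|\le C|\epsilon|\,|\log(|\epsilon|)|$, hence $\frac1{|\epsilon|}\int_\Omega|(A_\omega)|\,d\mathbb P(\omega)\le C|\epsilon|\,|\log(|\epsilon|)|^2\to0$ as $\epsilon\to0$. For $(B_\omega)$: Theorem~\ref{1043} gives $\sup_{\omega\in\Omega'}\big\|\frac1\epsilon(h_\omega^\epsilon-h_\omega)-\hat h_\omega\big\|_w\to0$; pairing against the fixed test functions $f_\omega^2$, which are bounded in $C^3(M)$ uniformly in $\omega$ by~\eqref{bnd} (Leibniz), and using the continuous embedding $\mathcal B_w\hookrightarrow\mathcal D'_3$, we obtain $\sup_{\omega\in\Omega'}\big|\frac1\epsilon(B_\omega)-\hat h_\omega(f_\omega^2)\big|\to0$, and since $\mathbb P$ is a probability measure this uniform convergence passes under $\int_\Omega\,d\mathbb P$. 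Combining the two estimates,
\[
\lim_{\epsilon\to0}\frac{G(\epsilon)-G(0)}{\epsilon}=\int_\Omega\hat h_\omega(f_\omega^2)\,d\mathbb P(\omega),
\]
which establishes differentiability at $\epsilon=0$.

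I do not expect a genuine analytic obstacle here. The only delicate point is the bookkeeping of distribution orders, i.e.\ checking that $f_{\omega,\epsilon}^2$ and $f_\omega^2$ are admissible test functions against each of $\mathcal B_{ss}\subset\mathcal B_s\subset\mathcal B_w$ throughout (immediate from $r>4$), and the elementary but crucial observation that $f_{\omega,\epsilon}-f_\omega$ is \emph{constant} in $x$: this is what collapses the quadratic term to $-c_{\omega,\epsilon}^2=O(\epsilon^2\log^2|\epsilon|)$, which is negligible, while the linear contribution comes verbatim from the linear response Theorem~\ref{1043} applied to $\epsilon\mapsto h_\omega^\epsilon$.
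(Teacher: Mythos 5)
Your proof is correct, and it takes a genuinely different route from the paper. The paper's proof is a one-liner once Theorem~\ref{thm:annealedresp} is in place: it observes that the map in question equals $R(\epsilon,H(\epsilon))$ with $H(\epsilon)(\omega)=f_{\omega,\epsilon}^2$, checks that $H$ is differentiable at $0$ in $L^\infty(\Omega,C^r(M))$ (using the observations preceding the lemma, which themselves rest on Theorem~\ref{1043}), and invokes the chain rule together with the differentiability of $R$ established in Theorem~\ref{thm:annealedresp}. Your argument bypasses Theorem~\ref{thm:annealedresp} entirely: you split the fiberwise difference as $(A_\omega)+(B_\omega)$, where $A_\omega$ captures the change in the observable and $B_\omega$ the change in the measure, and you handle them with Theorem~\ref{sst} and Theorem~\ref{1043} respectively. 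The decisive simplification — that $f_{\omega,\epsilon}-f_\omega=c_{\omega,\epsilon}$ is constant in $x$, collapsing $A_\omega$ to $-c_{\omega,\epsilon}^2=O(\epsilon^2\log^2|\epsilon|)$ — is a nice observation that the paper does not make explicit (it is hidden inside the chain rule computation, where the corresponding cross term is killed by $h_\omega(f_\omega)=0$, as one can verify by unwinding~\eqref{DL}). Both proofs rely on the same two abstract inputs (Theorems~\ref{sst} and~\ref{1043}), and your final formula $G'(0)=\int_\Omega\hat h_\omega(f_\omega^2)\,d\mathbb P(\omega)$ agrees with what the paper's chain-rule computation yields. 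What the paper's route buys is systematization — it exhibits the lemma as an immediate corollary of the general annealed response statement — while your route buys a self-contained, elementary argument that makes the cancellation structure transparent, at the small cost of having to verify the distribution-order bookkeeping by hand (which you do correctly, since $r>4$ and $\mathcal B_w=\mathcal B^{1,3}$ consists of distributions of order at most~$3$).
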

	
	\begin{proof}
		For $\epsilon$ sufficiently close to $0$, let $H(\epsilon)\in L^\infty(\Omega,C^r(M))$ be defined by 
		\[
		H(\epsilon)(\omega)=f_{\omega, \epsilon}^2, \quad \omega \in \Omega.
		\]
		Then, the discussion preceding the statement of the lemma implies that the map $H$ is differentiable at $\epsilon=0$. Now the conclusion of the lemma follows from Theorem \ref{thm:annealedresp} and the simple observation that 
		\[
		\int_\Omega\int_M f_{\omega,\epsilon}^2(x)dh_{\omega}^\epsilon(x)d\mathbb P(\omega)=R(\epsilon, H(\epsilon)).
		\]
		with $R$ given by \eqref{eq:R}.
	\end{proof}

	We recall that (see Subsection~\ref{Pre}) that for $h\in \mathcal B^{p,q}$ and $f\in C^q (M)$ we can define $f\cdot h \in \mathcal B^{p,q}$ whose action as a distribution is given by
	\[
	(f\cdot h)(\phi)=h(f\phi), \quad \text{for $\phi \in C^q(M)$.}
	\]
	Moreover, there exists $C>0$ (depending only on $M$) such that 
	\[
	\lVert f\cdot h\rVert_{p,q}\le C \lVert h\rVert_{p,q} \cdot \lVert f\rVert_{C^q}.
	\]
	The above inequality will be frequently used in what follows and thus we will not explicitly refer to it. Moreover, in what follows, $C>0$ will denote a constant which is independent on all parameters ($\omega$, $n$ etc.) involved. 
	
	Observe that 
	\[( f_{\omega,\epsilon} \cdot  h_{\omega}^\epsilon )(f_{\sigma^n\omega,\epsilon}\circ T_{\omega,\epsilon}^{n})= \L_{\omega,\epsilon}^n (f_{\omega,\epsilon} \cdot h_{\omega}^\epsilon) (f_{\sigma^n\omega,\epsilon}). \]
	In addition,  $( f_{\omega,\epsilon} \cdot h_{\omega}^\epsilon)(1)=h_{\omega}^\epsilon (f_{\omega,\epsilon})=0$.
	We now write
	\begin{equation}
		\frac{1}{\epsilon}\left(\L^{n}_{\omega,\epsilon}(f_{\omega,\epsilon}\cdot h_{\omega}^\epsilon) (f_{\sigma^n\omega,\epsilon} )- \L_\omega^n(f_\omega \cdot h_\omega )(f_{\sigma^n\omega})\right)=(I)_{n,\omega, \epsilon}+(II)_{n,\omega, \epsilon}+(III)_{n,\omega, \epsilon},
	\end{equation}
	where
	\begin{align*}
		(I)_{n,\omega, \epsilon}&:=\L_\omega^n (f_\omega \cdot h_\omega) \bigg ( \frac{1}{\epsilon}\left(f_{\sigma^n\omega,\epsilon}-f_{\sigma^n\omega}\right) \bigg),\\
		(II)_{n,\omega, \epsilon}&:= \frac{1}{\epsilon}\left(\L_{\omega,\epsilon}^n-\L_{\omega}^n\right)(f_{\omega}\cdot h_\omega) (f_{\sigma^n\omega,\epsilon}),\\
		(III)_{n,\omega, \epsilon}&:= \L_{\omega,\epsilon}^n\bigg ( \frac{f_{\omega,\epsilon}\cdot h_{\omega}^\epsilon-f_\omega\cdot  h_\omega}{\epsilon}\bigg )(f_{\sigma^n\omega,\epsilon}).
	\end{align*}

	\begin{lemma}
		For each $n\in \N$, 
		\[
		\lim_{\epsilon \to 0}\esssup_{\omega \in \Omega}\bigg \lvert (I)_{n,\omega, \epsilon}-\hat h_{\sigma^n \omega}(F_{\sigma^n \omega})\L_\omega^n (f_\omega \cdot h_\omega) (1)\bigg \rvert=0.
		\]
		In addition, for $\epsilon$ sufficiently close to $0$, we have  that
		\[
		\esssup_{\omega \in \Omega} \lvert (I)_{n, \omega, \epsilon} \rvert \le Ce^{-\lambda n}.
		\]
	\end{lemma}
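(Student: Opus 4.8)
The plan is to exploit the special structure of the test function in
$(I)_{n,\omega,\epsilon}=\bigl[\L_\omega^n(f_\omega\cdot h_\omega)\bigr]\bigl(\tfrac1\epsilon(f_{\sigma^n\omega,\epsilon}-f_{\sigma^n\omega})\bigr)$.
Since $f_{\omega,\epsilon}=F_\omega-h_\omega^\epsilon(F_\omega)$ and $f_\omega=F_\omega-h_\omega(F_\omega)$ differ only by the additive constant $c_{n,\omega}(\epsilon):=(h_{\sigma^n\omega}-h_{\sigma^n\omega}^\epsilon)(F_{\sigma^n\omega})$, the function $f_{\sigma^n\omega,\epsilon}-f_{\sigma^n\omega}$ is precisely $c_{n,\omega}(\epsilon)$ times the constant function $1$ on $M$; hence, by linearity of the distribution $\L_\omega^n(f_\omega\cdot h_\omega)$,
\[
(I)_{n,\omega,\epsilon}=\frac{c_{n,\omega}(\epsilon)}{\epsilon}\,\bigl[\L_\omega^n(f_\omega\cdot h_\omega)\bigr](1).
\]

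The scalar $[\L_\omega^n(f_\omega\cdot h_\omega)](1)$ vanishes: since $1\circ T=1$ for every diffeomorphism, the transfer-operator duality gives $[\L_\omega^n(f_\omega\cdot h_\omega)](1)=(f_\omega\cdot h_\omega)(1)=h_\omega(f_\omega)$, and $h_\omega(f_\omega)=\int_M f_\omega\,dh_\omega=0$ because $f_\omega=F_\omega-h_\omega(F_\omega)$ has zero $h_\omega$-mean (as already noted, for $\epsilon=0$, in the discussion preceding the lemma; equivalently $\psi\circ\L_\omega^n=\psi$ by \eqref{4x} and $\psi(f_\omega\cdot h_\omega)=0$). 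Therefore $(I)_{n,\omega,\epsilon}=0$ for every $\epsilon\neq 0$, $\omega\in\Omega$, $n\in\N$, and both assertions follow at once — the displayed limit being the trivial identity $0=\hat h_{\sigma^n\omega}(F_{\sigma^n\omega})\cdot 0$ (the sign is immaterial here) and the exponential bound holding trivially.

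For an argument parallel to the forthcoming lemmas on $(II)_{n,\omega,\epsilon}$ and $(III)_{n,\omega,\epsilon}$ (which have genuinely nonzero limits), one can instead estimate directly, without invoking the vanishing above. By \eqref{dec}, $f_\omega\cdot h_\omega\in\mathcal B_s^0$, and Lemma~\ref{prop:905} together with \eqref{bnd} gives $\sup_\omega\|f_\omega\cdot h_\omega\|_s<\infty$, so $\|\L_\omega^n(f_\omega\cdot h_\omega)\|_s\le Ce^{-\lambda n}$ uniformly in $\omega$. Since $\mathcal B_s=\mathcal B^{2,2}$ consists of distributions of order at most $2$ and $F_\omega\in C^r\subset C^2$ (recall $r>4$), the pairing is controlled by $\|\L_\omega^n(f_\omega\cdot h_\omega)\|_s$ times the $C^2$-norm of the constant $\tfrac1\epsilon(f_{\sigma^n\omega,\epsilon}-f_{\sigma^n\omega})$, i.e. by $Ce^{-\lambda n}\cdot\tfrac1{|\epsilon|}|c_{n,\omega}(\epsilon)|$. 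By \eqref{TET}, $\tfrac1\epsilon(f_{\omega,\epsilon}-f_\omega)\to -\hat h_\omega(F_\omega)$ uniformly in $\omega$, so for $|\epsilon|$ small $\tfrac1{|\epsilon|}|c_{n,\omega}(\epsilon)|$ is bounded uniformly in $\omega$ (by $1+\esssup_\omega|\hat h_\omega(F_\omega)|<\infty$, using \eqref{sup0}); this yields $\esssup_\omega|(I)_{n,\omega,\epsilon}|\le Ce^{-\lambda n}$ for small $|\epsilon|$ and, letting $\epsilon\to0$, convergence of $(I)_{n,\omega,\epsilon}$ to $-\hat h_{\sigma^n\omega}(F_{\sigma^n\omega})[\L_\omega^n(f_\omega\cdot h_\omega)](1)$ uniformly in $\omega$, in agreement with the above.

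I expect no genuine obstacle here: the substance is the decoupling of $(I)_{n,\omega,\epsilon}$ into a scalar times a distribution that annihilates constants. The only point deserving care is the bookkeeping of the anisotropic norms and of the orders of the distributions, so that each pairing $\mathcal B^{p,q}\times C^q$ is legitimate — automatic since $r>4$ dominates every $q\in\{1,2,3\}$ occurring — together with extracting from \eqref{TET} the uniform-in-$\epsilon$ bound on $\tfrac1\epsilon(f_{\omega,\epsilon}-f_\omega)$.
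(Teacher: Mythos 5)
Your primary argument is correct and strictly simpler than what the paper does. You observe that $f_{\sigma^n\omega,\epsilon}-f_{\sigma^n\omega}=(h_{\sigma^n\omega}-h_{\sigma^n\omega}^\epsilon)(F_{\sigma^n\omega})$ is a constant function on $M$, and that the distribution $\L_\omega^n(f_\omega\cdot h_\omega)$ annihilates constants (since $\psi\circ\L_\omega^n=\psi$ by \eqref{4x} and $\psi(f_\omega\cdot h_\omega)=h_\omega(f_\omega)=0$, $h_\omega$ being a probability measure). Hence $(I)_{n,\omega,\epsilon}\equiv 0$ for all $\epsilon\neq0$, $\omega\in\Omega'$ and $n\in\N$, and the lemma collapses to the identity $0=\hat h_{\sigma^n\omega}(F_{\sigma^n\omega})\cdot 0$ together with the vacuous bound $0\le Ce^{-\lambda n}$.

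The paper's own proof does not use this vanishing. It treats $(I)_{n,\omega,\epsilon}$ with the same generic machinery it later applies to $(II)$ and $(III)$: \eqref{1011} for exponential decay of $\L_\omega^n(f_\omega\cdot h_\omega)$, \eqref{bnd} for a uniform bound on $f_\omega\cdot h_\omega$, \eqref{TET} for the uniform convergence of $\tfrac1\epsilon(f_{\omega,\epsilon}-f_\omega)$, and \eqref{sup0} to bound the limiting term — without ever noting that the scalar $\L_\omega^n(f_\omega\cdot h_\omega)(1)$ is zero. Your secondary argument is a faithful reproduction of exactly that route, and you correctly flag along the way that \eqref{TET} actually gives the limit $-\hat h_{\sigma^n\omega}(F_{\sigma^n\omega})\L_\omega^n(f_\omega\cdot h_\omega)(1)$ rather than $+$, which is immaterial only because this scalar vanishes; this sign mismatch is a small inaccuracy in the paper's statement that your shortcut renders harmless. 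As a comparison: the paper's uniform treatment keeps the three terms $(I)$, $(II)$, $(III)$ on an equal footing and is perhaps easier to copy-paste across the three lemmas, whereas your observation is structurally more informative, showing that the $(I)$ term in the decomposition contributes nothing at all — a fact the generic pairing argument conceals.
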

	
	\begin{proof}
		The first assertion follows directly from~\eqref{1011}, \eqref{bnd} and~\eqref{TET}. In addition, observe  that for $\epsilon$ sufficiently close to $0$, 
		\[
		\esssup_{\omega \in \Omega}  \bigg \lvert (I)_{n, \omega, \epsilon}- \hat h_{\sigma^n \omega}(F_{\sigma^n \omega})\L_\omega^n (f_\omega \cdot h_\omega) (1)\bigg \rvert  \le Ce^{-\lambda n}.
		\]
		On the other hand, \eqref{1011},  \eqref{sup0} and~\eqref{TET} imply that
		\[
		\esssup_{\omega \in \Omega} \bigg \lvert  \hat h_{\sigma^n \omega}(F_{\sigma^n \omega})\L_\omega^n (f_\omega \cdot h_\omega) (1) \bigg  \rvert \le Ce^{-\lambda n}.
		\]
		The above two estimates readily give the second assertion of the lemma.
	\end{proof}
	
	\begin{lemma}
		For each $n\in \N$, 
		\begin{equation}\label{a23x}
			\lim_{\epsilon \to 0} \esssup_{\omega\in\Omega}\left|(II)_{n,\omega, \epsilon}- \hat\L_{n,\omega}(f_\omega \cdot h_\omega) (f_{\sigma^n\omega})\right|= 0,
		\end{equation}
		where \[\hat\L_{n,\omega}=\sum_{k=1}^n\L_{\sigma^k\omega}^{n-k}\hat\L_{\sigma^{k-1}\omega}\L^{k-1}_{\omega}.\] 
		Furthermore, for $\epsilon$ sufficiently close to $0$, we have that
		\[
		\esssup_{\omega \in \Omega}\lvert (II)_{n, \omega, \epsilon} \rvert \le Cne^{-\lambda' n}.
		\]
	\end{lemma}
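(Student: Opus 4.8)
The plan is to control $(II)_{n,\omega,\epsilon}$ through the telescoping identity
\[
\mathcal L_{\omega,\epsilon}^n-\mathcal L_\omega^n=\sum_{k=1}^n \mathcal L_{\sigma^k\omega,\epsilon}^{n-k}\bigl(\mathcal L_{\sigma^{k-1}\omega,\epsilon}-\mathcal L_{\sigma^{k-1}\omega}\bigr)\mathcal L_\omega^{k-1},
\]
divided by $\epsilon$ and compared termwise with $\hat{\mathcal L}_{n,\omega}=\sum_{k=1}^n\mathcal L_{\sigma^k\omega}^{n-k}\hat{\mathcal L}_{\sigma^{k-1}\omega}\mathcal L_\omega^{k-1}$. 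As a preliminary step I would record that $g_\omega:=f_\omega\cdot h_\omega$ lies in $\mathcal B_{ss}^0$ with $\esssup_{\omega}\|g_\omega\|_{ss}<\infty$: membership in $\mathcal B_{ss}=\mathcal B^{3,1}$ follows since multiplication by $f_\omega\in C^r$ ($r>4$) is bounded there, the uniform bound from Lemma~\ref{prop:905} together with~\eqref{bnd}, and the mean--zero property from $\psi(g_\omega)=h_\omega(f_\omega)=0$.

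For the convergence~\eqref{a23x}, the $k$-th summand of $\tfrac1\epsilon(\mathcal L_{\omega,\epsilon}^n-\mathcal L_\omega^n)g_\omega-\hat{\mathcal L}_{n,\omega}g_\omega$ I would rewrite as
\[
\mathcal L_{\sigma^k\omega,\epsilon}^{n-k}\Bigl(\tfrac1\epsilon(\mathcal L_{\sigma^{k-1}\omega,\epsilon}-\mathcal L_{\sigma^{k-1}\omega})-\hat{\mathcal L}_{\sigma^{k-1}\omega}\Bigr)\mathcal L_\omega^{k-1}g_\omega+\bigl(\mathcal L_{\sigma^k\omega,\epsilon}^{n-k}-\mathcal L_{\sigma^k\omega}^{n-k}\bigr)\hat{\mathcal L}_{\sigma^{k-1}\omega}\mathcal L_\omega^{k-1}g_\omega.
\]
The first piece has $\|\cdot\|_w$-norm at most $C\alpha(\epsilon)\|\mathcal L_\omega^{k-1}g_\omega\|_{ss}$, by~\eqref{lim} and the uniform $\mathcal B_w$-boundedness~\eqref{3} of $\mathcal L_{\sigma^k\omega,\epsilon}^{n-k}$; the second piece, with $\hat{\mathcal L}_{\sigma^{k-1}\omega}\mathcal L_\omega^{k-1}g_\omega\in\mathcal B_s$ of norm uniformly bounded (by~\eqref{qx} and~\eqref{1011}), has $\|\cdot\|_w$-norm at most $C|\epsilon|\bigl(\tfrac1{1-\lambda_1}+n\bigr)$ by the intermediate bound~\eqref{aux} from the proof of Proposition~\ref{922}. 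Both go to $0$ uniformly in $\omega$ for $n$ fixed, so summing over $k$ gives $\esssup_\omega\|\tfrac1\epsilon(\mathcal L_{\omega,\epsilon}^n-\mathcal L_\omega^n)g_\omega-\hat{\mathcal L}_{n,\omega}g_\omega\|_w\to0$. I then pass to the pairing via
\[
(II)_{n,\omega,\epsilon}-\hat{\mathcal L}_{n,\omega}g_\omega(f_{\sigma^n\omega})=\Bigl[\tfrac1\epsilon(\mathcal L_{\omega,\epsilon}^n-\mathcal L_\omega^n)g_\omega-\hat{\mathcal L}_{n,\omega}g_\omega\Bigr](f_{\sigma^n\omega,\epsilon})+\hat{\mathcal L}_{n,\omega}g_\omega\bigl(f_{\sigma^n\omega,\epsilon}-f_{\sigma^n\omega}\bigr),
\]
using that elements of $\mathcal B_w=\mathcal B^{1,3}$ are distributions of order at most $3$, that $\esssup_\omega\|f_{\sigma^n\omega,\epsilon}\|_{C^r}\le C$ by~\eqref{bnd}, that $\hat{\mathcal L}_{n,\omega}g_\omega\in\mathcal B_s$ with norm bounded uniformly in $\omega$, and that $\esssup_\omega\|f_{\sigma^n\omega,\epsilon}-f_{\sigma^n\omega}\|_{C^r}\to0$ (since $f_{\omega,\epsilon}-f_\omega=(h_\omega-h_\omega^\epsilon)(F_\omega)$ is constant in $x$ and $\|h_\omega^\epsilon-h_\omega\|_w\le C|\epsilon||\log|\epsilon||$ by Theorem~\ref{sst}); both summands then vanish as $\epsilon\to0$, which is~\eqref{a23x}.

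For the uniform bound $\esssup_\omega|(II)_{n,\omega,\epsilon}|\le Cne^{-\lambda' n}$ I would not use the limit but estimate each telescoping term directly, exploiting $g_\omega\in\mathcal B_{ss}^0$: by~\eqref{1011}, $\|\mathcal L_\omega^{k-1}g_\omega\|_{ss}\le De^{-\lambda(k-1)}\|g_\omega\|_{ss}$; applying $\tfrac1\epsilon(\mathcal L_{\sigma^{k-1}\omega,\epsilon}-\mathcal L_{\sigma^{k-1}\omega})$, which is bounded $\mathcal B_{ss}\to\mathcal B_s$ uniformly by~\eqref{2x} and $\psi$-preserving (by~\eqref{4x}), keeps one in $\mathcal B_s^0$; then $\mathcal L_{\sigma^k\omega,\epsilon}^{n-k}$ contributes $D'e^{-\lambda'(n-k)}$ by~\eqref{dec1x2}. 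Taking (without loss of generality) $\lambda'\le\lambda$, the $k$-th term has $\|\cdot\|_s\ge\|\cdot\|_w$-norm $O(e^{-\lambda'(n-1)})$, so the sum is $O(ne^{-\lambda' n})$, and pairing against $f_{\sigma^n\omega,\epsilon}$ (bounded in $C^r$ by~\eqref{bnd}) yields the claim. I expect the only real difficulty to be bookkeeping: at each step one must identify precisely which of the three spaces a given intermediate object lies in, check that the mean--zero constraint is propagated (so that~\eqref{1011} and~\eqref{dec1x2} may be invoked), and keep every constant independent of $\omega$ and $\epsilon$ — which is where Lemma~\ref{prop:905},~\eqref{sup0} and~\eqref{bnd} are used. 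No essentially new idea beyond Theorems~\ref{sst} and~\ref{1043} should be required.
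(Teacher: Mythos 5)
Your proof is correct and follows essentially the same route as the paper's: the telescoping identity, the reduction to a $\|\cdot\|_w$-estimate of $\tfrac1\epsilon(\mathcal L_{\omega,\epsilon}^n-\mathcal L_\omega^n)(f_\omega\cdot h_\omega)-\hat{\mathcal L}_{n,\omega}(f_\omega\cdot h_\omega)$ via \eqref{1011}, \eqref{2x}, \eqref{lim}, \eqref{aux}, \eqref{bnd} and \eqref{dec1x2}, and the final pairing decomposition against $f_{\sigma^n\omega,\epsilon}$ together with Theorem~\ref{sst} are all the same. The only cosmetic differences are that you split the $k$-th summand as $x(y-b)+(x-a)b$ where the paper uses $(x-a)y+a(y-b)$, and that for the second assertion you estimate $(II)_{n,\omega,\epsilon}$ by a direct telescoping bound (exploiting $f_\omega\cdot h_\omega\in\mathcal B_{ss}^0$ and $\psi$-preservation) rather than combining the paper's~\eqref{252x}-type uniform bound with~\eqref{pa3}; both variants are equally valid.
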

	
	\begin{proof}
		In order to prove~\eqref{a23x}, we first claim that 
		\begin{equation}\label{252x}
			\left\|\frac{1}{\epsilon}\left(\L_{\omega,\epsilon}^n-\L_\omega^n\right)(f_\omega \cdot h_\omega)-\hat\L_{n,\omega}(f_\omega \cdot  h_\omega)\right\|_w\leq \tilde\alpha(\epsilon),
		\end{equation}
		with $\tilde\alpha(\epsilon)\to 0$ when $\epsilon\to 0$.
		Observe that 
		\begin{align*}
			\frac{1}{\epsilon}\left(\L_{\omega,\epsilon}^n-\L_\omega^n\right)=\sum_{k=1}^n \L_{\sigma^k\omega,\epsilon}^{n-k}\frac{\L_{\sigma^{k-1}\omega,\epsilon}-\L_{\sigma^{k-1}\omega}}{\epsilon}\L^{k-1}_{\omega},
		\end{align*}
		and therefore
		\begin{align*}
			&\frac{1}{\epsilon}\left(\L_{\omega,\epsilon}^n-\L_\omega^n\right)-\hat\L_{n,\omega}=\sum_{k=1}^n\left[\L_{\sigma^k\omega,\epsilon}^{n-k}\frac{\L_{\sigma^{k-1}\omega,\epsilon}-\L_{\sigma^{k-1}\omega}}{\epsilon}-\L_{\sigma^k\omega}^{n-k}\hat\L_{\sigma^{k-1}\omega}\right]\L^{k-1}_{\omega}\\
			&=\sum_{k=1}^n\left[\left(\L_{\sigma^k\omega,\epsilon}^{n-k}-\L_{\sigma^k\omega}^{n-k}\right)\frac{\L_{\sigma^{k-1}\omega,\epsilon}-\L_{\sigma^{k-1}\omega}}{\epsilon}+
			\L_{\sigma^k\omega}^{n-k}\left(\frac{\L_{\sigma^{k-1}\omega,\epsilon}-\L_{\sigma^{k-1}\omega}}{\epsilon}-\hat\L_{\sigma^{k-1}\omega}\right)\right]\L^{k-1}_{\omega}.
		\end{align*}
		By the arguments in the proof of Proposition~\ref{922},  \eqref{1011}, \eqref{2x}  and~\eqref{bnd}, we have that 
		\begin{equation}\label{pa1}
			\begin{split}
				&\left\|\left(\L_{\sigma^k\omega,\epsilon}^{n-k}-\L_{\sigma^k\omega}^{n-k}\right)\frac{\L_{\sigma^{k-1}\omega,\epsilon}-\L_{\sigma^{k-1}\omega}}{\epsilon}\L^{k-1}_{\omega}(f_\omega \cdot h_\omega)\right\|_w\\
				&\leq C|\epsilon|(n-k)\left\|\frac{\L_{\sigma^{k-1}\omega,\epsilon}-\L_{\sigma^{k-1}\omega}}{\epsilon}\L^{k-1}_{\omega}(f_\omega \cdot  h_\omega)\right\|_s\\
				&\leq C|\epsilon|(n-k)e^{-\lambda(k-1)}\esssup_{\omega\in\Omega}\|f_\omega \cdot h_\omega\|_{ss} \\
				&\le C|\epsilon |(n-k)e^{-\lambda k}.
			\end{split}
		\end{equation}

		Similarly, using \eqref{1011}, \eqref{x2x}, \eqref{lim}, \eqref{3x} and~\eqref{bnd}, we obtain that 
		\begin{equation}\label{pa2}
			\begin{split}
				&\left\|\L_{\sigma^k\omega}^{n-k}\left(\frac{\L_{\sigma^{k-1}\omega,\epsilon}-\L_{\sigma^{k-1}\omega}}{\epsilon}-\hat\L_{\sigma^{k-1}\omega}\right)\L^{k-1}_{\omega}(f_\omega \cdot h_\omega)\right\|_{w}\\
				&\leq Ce^{-\lambda'(n-k)}\left\|\left(\frac{\L_{\sigma^{k-1}\omega,\epsilon}-\L_{\sigma^{k-1}\omega}}{\epsilon}-\hat\L_{\sigma^{k-1}\omega}\right)\L^{k-1}_{\omega}(f_\omega \cdot  h_\omega)\right\|_{w}\\
				&\leq Ce^{-\lambda'(n-k)}\alpha(\epsilon)\|\L^{k-1}_{\omega}(f_\omega \cdot  h_\omega)\|_{ss}\\
				&\leq Ce^{-\lambda'n}\alpha(\epsilon)\esssup_{\omega\in\Omega}\|f_\omega \cdot  h_\omega\|_{ss} \\
				&\le C\alpha(\epsilon)e^{-\lambda' n}.
			\end{split}
		\end{equation}
		Then, \eqref{pa1} and~\eqref{pa2} imply~\eqref{252x}.  
		
		Furthermore, \eqref{dec}, \eqref{1011}, \eqref{x2x}, \eqref{qx} and~\eqref{bnd} imply that
		\begin{equation}\label{pa3}
			\begin{split}
				\lVert  \hat\L_{\omega,n}(f_\omega \cdot h_\omega) \rVert_w &\leq \sum_{k=1}^n\|\L_{\sigma^k\omega}^{n-k}\hat\L_{\sigma^{k-1}\omega}\L^{k-1}_{\omega}(f_\omega \cdot  h_\omega)\|_{s}\\
				&\leq C\sum_{k=1}^n e^{-\lambda(n-k)}\esssup_{\omega\in\Omega} (\|\hat\L_{\sigma^{k-1}\omega}\|_{B_{ss}\to B_s} \cdot \|\L^{k-1}_{\omega}(f_\omega \cdot  h_\omega)\|_{ss})\\
				&\leq C\sum_{k=1}^n e^{-\lambda(n-k)}e^{-\lambda(k-1)}\esssup_{\omega\in\Omega}\|f_\omega \cdot  h_\omega\|_{ss}\\
				&\leq Cne^{-\lambda n}.
			\end{split}
		\end{equation}
		Using Theorem~\ref{sst},  \eqref{bnd}, \eqref{252x} and~\eqref{pa3}, we have that 
		\[
		\begin{split}
			& \esssup_{\omega\in\Omega}\left|(II)_{n,\omega, \epsilon}- \hat\L_{n,\omega}(f_\omega \cdot h_\omega) (f_{\sigma^n\omega})\right| \\
			&\le \esssup_{\omega \in \Omega}\bigg |\frac{1}{\epsilon}\left(\L_{\omega,\epsilon}^n-\L_\omega^n\right)(f_\omega \cdot h_\omega)(f_{\sigma^n \omega, \epsilon})-\hat\L_{n,\omega}(f_\omega \cdot  h_\omega)(f_{\sigma^n \omega, \epsilon})\bigg | \\
			&\phantom{\le}+\esssup_{\omega \in \Omega} \bigg | \hat\L_{n,\omega}(f_\omega \cdot  h_\omega) (f_{\sigma^n \omega, \epsilon}-f_{\sigma^n \omega})\bigg |\\
			&\le  \tilde\alpha(\epsilon)\esssup_{\omega \in \Omega}\lVert f_{\sigma^n \omega, \epsilon}\rVert_{C^r}+Cne^{-\lambda n} \esssup_{\omega \in \Omega} | (h_\omega^\epsilon -h_\omega)(F_\omega)| \\
			&\le C\tilde \alpha (\epsilon)+Cn e^{-\lambda n} |\epsilon | |\log (|\epsilon |)|,
		\end{split}
		\]
		which implies the first assertion of the lemma.
		
		On the other hand, using~\eqref{3x} (which also persists under small perturbations),  \eqref{x2x}, \eqref{qx} and~\eqref{bnd}, we have that for $\epsilon$ sufficiently small, 
		\begin{equation}\label{o1}
			\esssup_{\omega \in \Omega}\left\|\left(\L_{\sigma^k\omega,\epsilon}^{n-k}-\L_{\sigma^k\omega}^{n-k}\right)\frac{\L_{\sigma^{k-1}\omega,\epsilon}-\L_{\sigma^{k-1}\omega}}{\epsilon}\L^{k-1}_{\omega}(f_\omega \cdot h_\omega)\right\|_w \le Ce^{-\lambda'n}.
		\end{equation}
		Moreover, from~\eqref{pa2} it follows that for $\epsilon$ sufficiently small,
		\begin{equation}\label{o2}
			\esssup_{\omega \in \Omega}\left\|\L_{\sigma^k\omega}^{n-k}\left(\frac{\L_{\sigma^{k-1}\omega,\epsilon}-\L_{\sigma^{k-1}\omega}}{\epsilon}-\hat\L_{\sigma^{k-1}\omega}\right)\L^{k-1}_{\omega}(f_\omega \cdot h_\omega)\right\|_{w} \le Ce^{-\lambda' n}.
		\end{equation}
		By~\eqref{o1} and~\eqref{o2}, we have that for sufficiently small $\epsilon$, 
		\[
		\esssup_{\omega \in \Omega}\left\|\frac{1}{\epsilon}\left(\L_{\omega,\epsilon}^n-\L_\omega^n\right)(f_\omega \cdot h_\omega)-\hat\L_{n,\omega}(f_\omega \cdot  h_\omega)\right\|_w \le Cne^{-\lambda' n}.
		\]
		The above estimate together with~\eqref{pa3} easily implies that the second assertion of the lemma also holds. 
	\end{proof}

	By using similar arguments, one can establish the following lemma.
	\begin{lemma}
		For each $n\in \N$,
		\[
		\lim_{\epsilon \to 0} \esssup_{\omega \in \Omega} \bigg \lvert (III)_{n,\omega, \epsilon} -  \L_{\omega}^n(\hat h_\omega (F_\omega)  h_\omega+f_\omega \cdot \hat h_\omega) (f_{\sigma^n\omega})\bigg \rvert =0.
		\]
		Moreover, for $\epsilon$ sufficiently small, we have that
		\[
		\esssup_{\omega \in \Omega}\lvert (III)_{n, \omega, \epsilon} \rvert \le Ce^{-\lambda' n}.
		\]
	\end{lemma}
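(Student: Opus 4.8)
The plan is to treat the three summands of $(III)_{n,\omega,\epsilon}$ in the same spirit as the two preceding lemmas, the only genuinely new input being a mild sharpening of Theorem~\ref{sst}. Set $g_{\omega,\epsilon}:=\tfrac{1}{\epsilon}\bigl(f_{\omega,\epsilon}\cdot h_\omega^\epsilon-f_\omega\cdot h_\omega\bigr)$, so that $(III)_{n,\omega,\epsilon}=\bigl(\L_{\omega,\epsilon}^n g_{\omega,\epsilon}\bigr)(f_{\sigma^n\omega,\epsilon})$. Because $(f_{\omega,\epsilon}\cdot h_\omega^\epsilon)(1)=h_\omega^\epsilon(f_{\omega,\epsilon})=0=(f_\omega\cdot h_\omega)(1)$, we have $g_{\omega,\epsilon}\in\mathcal B_{ss}^0\subset\mathcal B_s^0$, and the Leibniz splitting
\[
g_{\omega,\epsilon}=\frac{f_{\omega,\epsilon}-f_\omega}{\epsilon}\cdot h_\omega^\epsilon+f_\omega\cdot\frac{h_\omega^\epsilon-h_\omega}{\epsilon}
\]
identifies the limit: by \eqref{TET} the scalar $\tfrac{1}{\epsilon}(f_{\omega,\epsilon}-f_\omega)$ converges, uniformly in $\omega$, to $\dot f_\omega:=\partial_\epsilon f_{\omega,\epsilon}|_{\epsilon=0}$, and by Theorems~\ref{sst} and~\ref{1043} we have $h_\omega^\epsilon\to h_\omega$ and $\tfrac{1}{\epsilon}(h_\omega^\epsilon-h_\omega)\to\hat h_\omega$ in $\mathcal B_w$, uniformly in $\omega$. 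Hence the natural candidate for the $\mathcal B_w$-limit of $g_{\omega,\epsilon}$ is $\dot u_\omega:=\dot f_\omega\, h_\omega+f_\omega\cdot\hat h_\omega\in\mathcal B_s$, with $\sup_{\omega\in\Omega'}\|\dot u_\omega\|_s<\infty$ by Lemma~\ref{prop:905}, \eqref{sup0} and boundedness of multiplication by a $C^r$ function on $\mathcal B_s$; moreover $\dot u_\omega\in\mathcal B_w^0$, being the $\mathcal B_w$-limit of the $g_{\omega,\epsilon}\in\mathcal B_w^0$. Then $\L_\omega^n(\dot u_\omega)(f_{\sigma^n\omega})$ is the expression in the statement (observe that $\L_\omega^n(\dot f_\omega\, h_\omega)=\dot f_\omega\, h_{\sigma^n\omega}$ pairs to $0$ with $f_{\sigma^n\omega}$, so only the term $f_\omega\cdot\hat h_\omega$ actually contributes).

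The crucial---and only non-routine---point is the \emph{strong}-norm Lipschitz bound
\[
\sup_{\omega\in\Omega'}\|h_\omega^\epsilon-h_\omega\|_s\le C|\epsilon|\qquad\text{for all small }\epsilon,
\]
with $C$ independent of $\epsilon$. Theorem~\ref{sst} only yields this up to an extra factor $|\log|\epsilon||$, which would be fatal after dividing by $\epsilon$; the stronger bound, however, is visible directly from the series representation \eqref{EQ1}, which holds already in $\mathcal B_s$ (it follows from equivariance \eqref{x1x} alone, and its tail $\L_{\sigma^{-N}\omega}^N(h_{\sigma^{-N}\omega}^\epsilon-h_{\sigma^{-N}\omega})$ tends to $0$ in $\mathcal B_s^0$ by \eqref{dec} and Lemma~\ref{prop:905}). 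Indeed the $j$-th term of \eqref{EQ1} has $\mathcal B_s$-norm at most $De^{-\lambda j}\cdot C|\epsilon|\,\|h_{\sigma^{-(j+1)}\omega}^\epsilon\|_{ss}$ by \eqref{2x} and \eqref{dec}, and summing over $j$ (using Lemma~\ref{prop:905}) gives the claim. Combined with \eqref{bnd} and, again, boundedness of multiplication by a $C^r$ function on $\mathcal B_s$, the Leibniz splitting then yields $\sup_{\omega\in\Omega'}\|g_{\omega,\epsilon}\|_s\le C$, uniformly for small $\epsilon$.

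Granting this, both assertions follow as for $(I)_{n,\omega,\epsilon}$ and $(II)_{n,\omega,\epsilon}$. For the bound, \eqref{dec1x2} gives $\|\L_{\omega,\epsilon}^n g_{\omega,\epsilon}\|_s\le D'e^{-\lambda'n}\|g_{\omega,\epsilon}\|_s\le Ce^{-\lambda'n}$, and pairing with $f_{\sigma^n\omega,\epsilon}$---a $C^2$ function with $C^r$-norm essentially bounded by \eqref{bnd}---through the embedding $\mathcal B^{2,2}\hookrightarrow\mathcal D'_2$ gives $\esssup_{\omega}|(III)_{n,\omega,\epsilon}|\le Ce^{-\lambda'n}$. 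For the limit, fix $n$ and split
\[
(III)_{n,\omega,\epsilon}-\L_\omega^n(\dot u_\omega)(f_{\sigma^n\omega})=\bigl((\L_{\omega,\epsilon}^n-\L_\omega^n)g_{\omega,\epsilon}\bigr)(f_{\sigma^n\omega,\epsilon})+\bigl(\L_\omega^n(g_{\omega,\epsilon}-\dot u_\omega)\bigr)(f_{\sigma^n\omega,\epsilon})+\bigl(\L_\omega^n\dot u_\omega\bigr)(f_{\sigma^n\omega,\epsilon}-f_{\sigma^n\omega}).
\]
The first term is at most $Cn|\epsilon|$ by \eqref{aux} and the bound on $\|g_{\omega,\epsilon}\|_s$; the third is at most $Ce^{-\lambda'n}|\epsilon|$ by \eqref{3x} (note $\dot u_\omega\in\mathcal B_w^0$) and $\|f_{\sigma^n\omega,\epsilon}-f_{\sigma^n\omega}\|_{C^3}\le C|\epsilon|$ from \eqref{TET}; and the middle term is controlled by \eqref{3x} once one checks $\sup_{\omega\in\Omega'}\|g_{\omega,\epsilon}-\dot u_\omega\|_w\to 0$, which is immediate from the Leibniz splitting together with \eqref{TET}, Theorems~\ref{sst} and~\ref{1043}, and $\sup_\omega\|h_\omega^\epsilon\|_s<\infty$. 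For each fixed $n$ all three tend to $0$ as $\epsilon\to 0$, giving the first assertion. I expect the single real obstacle to be the strong-norm Lipschitz estimate of the second paragraph; once one realizes it can be extracted from \eqref{EQ1} rather than from Theorem~\ref{sst}, the rest merely repeats the earlier arguments.
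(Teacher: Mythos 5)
Your proof is correct, and since the paper disposes of this lemma with the single phrase ``by using similar arguments'', your write-up is in effect a completion rather than a reproduction. The overall template (Leibniz splitting of $g_{\omega,\epsilon}=\tfrac1\epsilon(f_{\omega,\epsilon}\cdot h_\omega^\epsilon-f_\omega\cdot h_\omega)$, identification of the limit via \eqref{TET} and Theorems \ref{sst} and \ref{1043}, telescoping in $\epsilon$ for fixed $n$, and the decay estimates \eqref{dec1x2}, \eqref{3x} for the uniform-in-$n$ bound) is exactly what the authors' ``similar arguments'' would entail. What you add, and what the earlier two lemmas genuinely do not supply, is the observation that the uniform bound $\esssup_\omega|(III)_{n,\omega,\epsilon}|\le Ce^{-\lambda' n}$ with $C$ independent of $\epsilon$ (which is what the dominated convergence step at the end of Section \ref{sec:var} requires) forces a bound $\sup_\omega\|g_{\omega,\epsilon}\|_s\le C$, hence an $O(|\epsilon|)$ estimate on $\sup_\omega\|h_\omega^\epsilon-h_\omega\|_s$; Theorem \ref{sst} only gives $O(|\epsilon|\,|\log|\epsilon||)$, which is fatal after division by $\epsilon$. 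Your extraction of the sharper Lipschitz bound directly from the series \eqref{EQ1} (whose convergence in $\mathcal B_s$ you correctly justify, the $j$-th term being $O(e^{-\lambda j}|\epsilon|)$ by \eqref{dec}, \eqref{2x} and \eqref{x2x}) is exactly the right fix and is the one non-routine ingredient of the argument. Two minor remarks: your limit candidate carries $\dot f_\omega\,h_\omega=-\hat h_\omega(F_\omega)\,h_\omega$ whereas the statement has $+\hat h_\omega(F_\omega)\,h_\omega$; as you observe, this term is annihilated upon pairing with $f_{\sigma^n\omega}$ (since $\L_\omega^n h_\omega=h_{\sigma^n\omega}$ and $h_{\sigma^n\omega}(f_{\sigma^n\omega})=0$), so the two expressions agree as numbers and the discrepancy is a harmless sign slip in the paper's formula. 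Also, for the middle term of your final splitting, \eqref{3} would suffice in place of \eqref{3x}, though your use of \eqref{3x} is legitimate since $g_{\omega,\epsilon}-\dot u_\omega\in\mathcal B_w^0$.
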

	The conclusion of Theorem~\ref{variance} follows from  previous lemmas and the dominated convergence theorem. 
	
	\begin{remark}
		In~\cite{DH} the authors have extended the results from~\cite{DFGTV} to the case of vector-valued observables. In particular, the quenched version of the central limit theorem for vector-valued observables was established. In this setting, the variance is a symmetric matrix which is in general positive semi-definite (for the central limit theorem to hold it needs to be positive-definite). One can easily establish the version of Theorem~\ref{variance} in this setting, essentially by repeating the arguments in the proof of Theorem~\ref{variance} for each matrix  component. 
	\end{remark}
	
	\section{Application to other type of random systems.} \label{sec:other}
	In this paper, we focused our efforts on studying (quenched) statistical stability and linear response for random compositions of Anosov diffeomorphisms. Nevertheless, our approach, or a slight modification thereof, is applicable to other types of random hyperbolic systems.
	\subsection{Random uniformly expanding dynamics.} In this subsection, let us describe the application of Theorems \ref{sst} and \ref{1043} to a simple class of fiberwise perturbations of random compositions of uniformly expanding maps of the unit circle $\mathbb S^1$. The setting is close to \cite[\S 6]{GS}: consider a family $(D_\epsilon)_{\epsilon\in I}$ of diffeomorphisms of $\mathbb S^1$ (where $0\in I\subset \R$ is an interval), satisfying\[D_\epsilon= \Id+\epsilon S,\]
	where $S:\mathbb S^1\to \R$ is  a $C^{4}$ map. Letting $(\Omega,\mathcal F,\mathbb P)$ be a probability space, endowed with an invertible, measure-preserving and ergodic map $\sigma:\Omega\circlearrowleft$. We consider a measurable map $\omega\in\Omega\mapsto T_{\omega}\in C^4(\mathbb S^1,\mathbb S^1)$ such that:
	\begin{enumerate}
		\item there exists $\lambda >1$ such that for $\mathbb P$-a.e. $\omega \in \Omega$, $\inf_{x\in\mathbb S^1}|T'_\omega(x)|\ge \lambda$;
		\item $\esssup_{\omega\in\Omega}\|T_\omega\|_{C^4}\le\Delta$ for some small $\Delta>0$.
	\end{enumerate}
	 We then set \[T_{\omega,\epsilon}:= D_\epsilon\circ T_\omega \quad \text{for $\epsilon \in I$ and $\omega \in \Omega$,}\] and we review the assumptions of Theorems \ref{sst} and \ref{1043} for the spaces $\B_{ss}=W^{3,1}(\mathbb S^1)$, $\B_s=W^{2,1}(\mathbb S^1)$ and $\B_w=W^{1,1}(\mathbb S^1)$:
	\begin{itemize}
		\item \eqref{1} and \eqref{3} are established in \cite[\S 5]{C}.
		\item \eqref{2} follows from \cite[Prop. 35]{GS}
		\item By applying~\cite[Prop. 2.10]{CR} (provided that $\Delta$ is sufficiently small),  we conclude that  \eqref{dec} holds on $(\B_{ss},\B_s,\B_w)$.
		\item To define the derivative operator $\hat\L_\omega$, we start by remarking that since $\L_{\omega,\epsilon}=\L_{D_\epsilon}\L_\omega$, one has (see~\cite[Eq. (51)]{GS}) that 
		\[\hat\L_\omega=\left[\frac{d\L_{D_\epsilon}}{d\epsilon}\right] \bigg \rvert_{\epsilon=0}\L_\omega=-(\L_\omega(\cdot)S)'.\]
		It is easy to see that $\hat\L_\omega$ defines a bounded operator from $\B_{ss}$ to $\B_s$ (resp. from $\B_s$ to $\B_w$) and satisfies \eqref{qx}.
		\\As for condition~\eqref{lim}, we have for $\phi\in\B_{s}$
		\begin{align*}
			\left\|\epsilon^{-1}(\L_{\omega,\epsilon}-\L_\omega)\phi-\hat\L_\omega\phi\right\|_w
			&\le \left\|\epsilon^{-1}(\L_{D_\epsilon}-\Id)+(\cdot S)'\right\|_{\B_{s}\to\B_w}\esssup_{\omega\in\Omega}\|\L_\omega\phi\|_{s}\\
			&\le C\alpha(\epsilon)\|\phi\|_{s}
		\end{align*}
		by using \eqref{1}, with $\alpha(\epsilon)=\left\|\epsilon^{-1}(\L_{D_\epsilon}-\Id)+(\cdot S)'\right\|_{\B_s\to\B_w}$,  which goes to $0$ as $\epsilon\to 0$ by \cite[Prop. 36]{GS}.
	\end{itemize}
	
	\subsection{Random piecewise hyperbolic dynamics.} Let us discuss the application of Theorem \ref{sst} to random compositions of close-by piecewise hyperbolic maps, defined on a two-dimensional compact Riemann manifold $X$, as described in \cite[\S10]{DFGTV} and \cite[\S2]{DL}. 
	It is noteworthy that one \emph{cannot} directly apply Theorem \ref{sst}, since, as noted in \cite[\S10.2.1]{DFGTV}, the transfer operator map $\omega\mapsto\L_\omega$ is not strongly measurable. Still, the conclusion of Theorem \ref{sst} hold; let us explain why.
	\par\noindent In \cite[\S2.4]{DL}, the set $\Gamma_A$ of maps $T$ satisfying the assumptions of \cite[\S2]{DL}, with second derivative $|D^2T|<A$ is introduced, as well as the distance $\gamma$ between two such maps. 
	\par\noindent Let us fix a (small enough) $\epsilon_0>0$, a $T\in\Gamma_A$ and let $X_{\epsilon_0}:=\{S\in\Gamma_A: \gamma(T,S)<\epsilon_0\}$. We let $\B_s$ and $\B_w$ be the Banach spaces defined in \cite[\S2.2]{DL}(where $\B_s$ is denoted $\B$). In particular, we recall that elements of $\B_s$ are distributions of order at most $1$. 
	Letting  $I:=\left[-\epsilon_0/2,\epsilon_0/2\right]$, we set, for a fixed $L>0$:
	\[B_{\epsilon_0,L}:=\{\mathcal T:I\to X_{\epsilon_0},~\gamma(\mathcal T(\epsilon),\mathcal T(\epsilon'))\le L|\epsilon-\epsilon'|,~\forall\epsilon,\epsilon'\in I\}.\]
	This can be viewed as a ball of Lipschitz (with respect to the distance $\gamma$) curves from $I$ to $X_{\epsilon_0}$. We now consider a measurable, countably-valued mapping $\mathbf{T}:\Omega\to B_{\epsilon_0,L}$. As before, $(\Omega,\mathcal F,\mathbb P)$ is a probability space endowed with an invertible, measure-preserving and ergodic map $\sigma$ and we will denote $T_{\omega,\epsilon}:=\mathbf{T}(\omega)(\epsilon,\cdot)$.
	\\We claim that for any $\epsilon\in I$, there exists 
	a measurable family $(h_\omega^\epsilon)_{\omega \in \Omega} \subset  \B_s$ such that $\L_{\omega,\epsilon}h_{\omega}^\epsilon=h_{\sigma\omega}^\epsilon$ for $\mathbb P$-a.e. $\omega \in \Omega$ and
	\[\esssup_{\omega\in\Omega}\|h_\omega^\epsilon-h_\omega\|_{w}\le C\epsilon^\beta|\log(|\epsilon|)|,\]
	for some $C>0$, $0<\beta<1$ independent on $\omega$ and $\epsilon$, with $h_\omega:=h_\omega^0$. 
	\\Let us review the assumptions for Theorem \ref{sst} in this context:
	\begin{itemize}
		\item \eqref{dec} holds by \cite[Eq (70)]{DFGTV}, where $\psi \in \B_s'$ is given by $\psi(h)=h(1)$, $h\in \B_s$. 
		\item Up to shrinking $\epsilon_0$, we have \eqref{1} and \eqref{3} by \cite[Eq (71)]{DFGTV}.
		\item Up to replacing $\epsilon$ by $\epsilon^\beta$, \eqref{2} follows from the definition of $B_{\epsilon_0,L}$ and \cite[Lemma 6.1]{DL}.
		\item As usual, \eqref{4} holds as $\L_{\omega,\epsilon}$ is a transfer operator associated to $T_{\omega,\epsilon}$.
	\end{itemize}
	In particular, Proposition \ref{922} (uniform in $\epsilon$ and $\omega$ exponential decay of correlations) holds in the present setting. We cannot use here the fixed-point construction of Proposition \ref{836}, since we do not know whether the cocycle of transfer operators $(\mathcal L_{\omega, \epsilon})_{\omega \in \Omega}$ is strongly measurable. However, we can use \eqref{1}, \eqref{3} and that, for each $\epsilon\in I$, $T_{\omega,\epsilon}$ is countably-valued to apply the  version of the MET for the  so-called $\mathbb P$-continuous cocycles (see~\cite[Theorem 17]{FLQ}): this gives us, as in Remark \ref{rem:MET}, that for each $\epsilon\in I$ there exists:
	\begin{itemize} \item  $1\le l=l(\epsilon)\le \infty$ and a sequence of \emph{exceptional
			Lyapunov exponents}
		\[0=\Lambda (\epsilon)=\lambda_1(\epsilon)>\lambda_2(\epsilon)>\ldots>\lambda_l(\epsilon)>\kappa (\epsilon)\] or in the case
		$l=\infty$, 
		\[0=\Lambda (\epsilon)=\lambda_1(\epsilon)>\lambda_2(\epsilon)>\ldots \quad  \text{with
			$\lim_{n\to\infty} \lambda_n(\epsilon)=\kappa (\epsilon)$};
		\]
		\item a full-measure set $\Omega_\epsilon$ such that for each $\omega\in\Omega_\epsilon$, there is a unique measurable Oseledets splitting
		$$\mathcal{B}_s=\left(\bigoplus_{j=1}^l Y_j^\epsilon(\omega)\right)\oplus V^\epsilon(\omega),$$
		where each component of the splitting is equivariant under $\mathcal{L}_{\omega, \epsilon}$, that is,
		$\mathcal L_{\omega, \epsilon}(Y_j^\epsilon(\omega))= Y_j^\epsilon(\sigma\omega)$ and
		$\mathcal L_{\omega, \epsilon}(V^\epsilon(\omega))\subset V^\epsilon(\sigma\omega)$. The subspaces $Y_j^\epsilon(\omega)$ are finite-dimensional and for each $y\in Y_j^\epsilon(\omega)\setminus\{0\}$,
		\[\lim_{n\to\infty}\frac 1n\log\|\mathcal L_{\omega, \epsilon}^n y\|_s=\lambda_j(\epsilon).\]
		Moreover, for $y\in V(\omega)$, $\lim_{n\to\infty}\frac 1n\log\|\mathcal
		L_{\omega, \epsilon}^n y\|_s\le \kappa(\epsilon)$.
	\end{itemize}
	It follows easily from Proposition~\ref{922} (see the proof of~\cite[Proposition 3.6.]{DFGTV}) that $Y_1^\epsilon (\omega)$ is one-dimensional: for each $\epsilon \in I$, we may thus consider a generator $h_{\omega}^\epsilon$, normalized by $\psi(h_\omega^\epsilon)=1$,  which satisfies $\L_{\omega,\epsilon}h_\omega^\epsilon=h_{\sigma\omega}^\epsilon$. We now claim that 
	\begin{equation}\label{TZ}
		\sup_{\epsilon\in I}\esssup_{\omega\in\Omega}\|h_\omega^\epsilon\|_s<+\infty.
	\end{equation}
	In order to establish~\eqref{TZ}, we start by observing that using~\eqref{dec1} we have that 
	\begin{equation}\label{tz1}
		\| h_\omega^\epsilon-\L_{\sigma^{-n}\omega,\epsilon}^n 1\|_s=\|\L_{\sigma^{-n}\omega,\epsilon}^n(h_{\sigma^{-n} \omega}^\epsilon-1)\|_s \le D'e^{-\lambda' n} \|h_{\sigma^{-n} \omega}^\epsilon-1\|_s,
	\end{equation}
	for $n\in \N$, $\omega \in \Omega$ and $\epsilon \in I$. Furthermore, since $\lambda_1(\epsilon)=0$, we have that the random variable $\omega \mapsto \|h_\omega^\epsilon\|_s$ is tempered\footnote{We recall that a random variable $K\colon \Omega \to (0, +\infty)$ is tempered if $\lim_{n\to \pm \infty} \frac 1 n \log K(\sigma^n \omega)=0$ for $\mathbb P$-a.e. $\omega \in \Omega$.} for each $\epsilon \in I$. Hence, by~\cite[Proposition 4.3.3]{Arnold} for each $\epsilon \in I$, there exists a random variable $K_\epsilon \colon \Omega \to (0, +\infty)$ such that 
	\begin{equation}\label{tz2}
		\|h^\epsilon_\omega -1\|_s \le K_\epsilon (\omega) \quad \text{and} \quad K_\epsilon (\sigma^n \omega) \le e^{\frac{\lambda' |n|}{2} }K_\epsilon (\omega),
	\end{equation}
	for $\mathbb P$-a.e. $\omega \in \Omega$ and $n\in \mathbb Z$. By~\eqref{tz1} and~\eqref{tz2}, we obtain that 
	\[
	\| h_\omega^\epsilon-\L_{\sigma^{-n}\omega,\epsilon}^n 1\|_s \le D'K_\epsilon (\omega)e^{-\frac{\lambda' n}{2}} \quad \text{for $\mathbb P$-a.e. $\omega \in \Omega$ and $n\in \N$,}
	\]
	which implies that for $\epsilon \in I$,
	\begin{equation}\label{tz3}
		h_\omega^\epsilon=\lim_{n\to \infty} \L_{\sigma^{-n}\omega,\epsilon}^n 1 \quad \text{in $\B_s$, for $\mathbb P$-a.e. $\omega \in \Omega$.}
	\end{equation}
	Clearly, \eqref{TZ} follows readily from~\eqref{1} and~\eqref{tz3}. From there, we can reproduce the proof of Theorem \ref{sst}, to get the announced result.
	
	\begin{remark}
		It is natural to ask whether 
		Theorem \ref{1043} can be applied  in the piecewise hyperbolic setting described above. First, we note that there is no natural candidate for a $\B_{ss}$ space. Indeed,  as noticed in the introduction of \cite{DL} (and in contrast with the situation in \cite{GL}),  considering a (piecewise) $C^r$ or a (piecewise) $C^s$, $r>s>2$ system yields the same couple ($\B_w$ and $\B_s$) of Banach spaces. In other words, the degree of the smoothness of maps doesn't influence the construction of the anisotropic spaces, which makes unclear whether this line of reasoning can produce a space $\B_{ss}$ satisfying our requirements. 
		In fact, to the best of our knowledge there are currently no results dealing with the linear response for classes of piecewise hyperbolic dynamics described above even in the deterministic setting (i.e. when we take  $\Omega$ to be a singleton).
		\\Secondly, the case of deterministic, one-dimensional piecewise expanding maps \cite{B1,BS1} suggests that, in general, linear response does not hold in a piecewise smooth setting.
		\\Finally, we notice that for random compositions of billiard maps such as described, e.g. in \cite{DZ} do not fall under the setup of Theorem \ref{sst}, as they do not satisfy Lasota-Yorke inequalities of the type \eqref{1} and \eqref{3} (the $\|.\|_w$ carries a factor $\eta^n$ for some $\eta\ge 1$).
	\end{remark}
	
	\section{Acknowledgments}
We would like to thank the anonymous referee for his/hers constructive comments.

	D.D. was supported in part by Croatian Science Foundation under the project
	IP-2019-04-1239 and by the University of Rijeka under the projects uniri-prirod-18-9 and uniri-pr-prirod-19-16.

	J.S. was supported by the European Research Council (ERC) under the European Union's Horizon 2020 research and innovation programme (grant agreement No 787304).
	
	\bibliographystyle{amsplain}

\begin{thebibliography}{9}
		\bibitem{Arnold}  L. Arnold, \emph{Random dynamical systems}, Springer Monogr. Math., Springer, Berlin, 1998.
		\bibitem{BahSau} W. Bahsoun and B. Saussol, \emph{Linear response in the intermittent family: differentiation in a weighted C0-norm}, Discrete Contin. Dyn. Syst. \textbf{36} (2016),  6657--6668.
		\bibitem{BRS} W. Bahsoun, M. Ruziboev and B. Saussol, \emph{Linear response for random dynamical systems}, Adv. Math. \textbf{364} (2020), 107011, 44pp.
		\bibitem{B0} V.Baladi, \emph{Correlation spectrum of quenched and annealed equilibrium states for random expanding maps}, Comm. Math. Phys.  \textbf{186} (1997), 671--700.
		\bibitem{B1} V. Baladi, \emph{On the susceptibility function of piecewise expanding interval maps}, Comm. Math. Phys. \textbf{275} (2007),  839--859.
		\bibitem{B2} V. Baladi, \emph{Linear response, or else}, ICM Seoul. In: Proceedings, Volume III, 525--545 (2014).
		\bibitem{Baladibook} V. Baladi, \emph{Dynamical zeta functions and dynamical determinants for hyperbolic maps. A functional approach}, Ergebnisse der Mathematik und ihrer Grenzgebiete. 3. Folge. A Series of Modern Surveys in Mathematics [Results in Mathematics and Related Areas. 3rd Series. A Series of Modern Surveys in Mathematics], 68. Springer, Cham, 2018. xv+291.
		\bibitem{BKS} V. Baladi, A. Kondah and B.Schmitt \emph{Random correlations for small perturbations of expanding maps}, Random and Computational Dynamics \textbf{4} (1996), 179--204.
		\bibitem{BS1} V. Baladi and D. Smania, \emph{Linear response formula for piecewise expanding unimodal maps}, Nonlinearity \textbf{21}, 677–711 (2008) (Corrigendum: Nonlinearity \textbf{25}, 2203--2205 (2012)).
		\bibitem{BS2} V. Baladi and D. Smania, \emph{Linear response for smooth deformations of generic nonuniformly hyperbolic unimodal maps},
		Ann. Sci. Éc. Norm. Supér.  \textbf{45} (2012), 861--926.
		\bibitem{BT} V. Baladi and M. Todd, \emph{Linear response for intermittent maps}, Comm. Math. Phys. \textbf{347} (2016),  857--874.
		\bibitem{Bo} T. Bogensch\"utz, \emph{Stochastic stability of invariant subspaces}, Ergodic Theory Dynam. Systems \textbf{20} (2000), 663--680. 
		\bibitem{C} H. Crimmins, \emph{Stability of hyperbolic Oseledets splittings for quasi-compact operator cocycles}, arXiv e-prints (2019), arXiv:1912.03008.
		\bibitem{CN} H. Crimmins and Y. Nakano, \emph{A spectral approach to quenched linear and higher-order response for partially hyperbolic dynamics}, arXiv e-prints (2021), arXiv:2105.11188.
		\bibitem{CR} J.P Conze and A.Raugi, \emph{Limit theorems for sequential expanding dynamical systems on $[0,1]$}, Ergodic theory and related fields,  Contemp. Math. \textbf{430} (2007), 89--121.
		\bibitem{DL} M. Demers and C. Liverani, \emph{Stability of statistical properties in two-dimensional piecewise hyperbolic maps}, Trans. Amer. Math. Soc. \textbf{360} (2008), 4777--4814. 
		\bibitem{DFGTV} D. Dragi\v cevi\' c, G. Froyland, C. Gonzalez-Tokman and S. Vaienti, \emph{A spectral approach for quenched limit theorems for random hyperbolic dynamical systems}, Trans. Amer. Math. Soc. \textbf{373} (2020), 629--664.
		\bibitem{DH} D. Dragi\v cevi\' c and Y. Hafouta, \emph{Limit Theorems for Random Expanding or Anosov Dynamical Systems and Vector-Valued Observables},  Ann. Henri Poincare \textbf{21} (2020), 3869--3917.
		\bibitem{DZ} M. Demers, HK. Zhang, \emph{A Functional Analytic Approach to Perturbations of the Lorentz Gas}, Commun. Math. Phys. \textbf{324}, 767-–830 (2013).
		\bibitem{Dol} D. Dolgopyat, \emph{On differentiability of SRB states for partially hyperbolic systems}, Invent. Math. \textbf{155} (2004), 389--449. 
		\bibitem{FGTQ} G. Froyland, C. Gonzalez-Tokman and A. Quas, \emph{Stability and approximation of random invariant densities for Lasota-Yorke map cocycles}, Nonlinearity \textbf{27} (2014), 647--660.
		\bibitem{FLQ} G. Froyland, S. Lloyd, and A. Quas. \emph{A semi-invertible Oseledets theorem with applications to
			transfer operator cocycles}, Discrete and Continuous Dynamical Systems, Series A, \textbf{33(9)} (2013), 3835--3860.
		\bibitem{GG} S. Galatolo and P. Giulietti, \emph{A linear response for dynamical systems with additive noise}, Nonlinearity \textbf{32} (2019), 2269--2301. 
		\bibitem{GL} S. Gou\"{e}zel and C. Liverani, \emph{Banach spaces adapted to Anosov systems}, Ergodic Theory Dynam. Systems \textbf{26} (2006), 123--151. 
		\bibitem{GS} S. Galatolo and J. Sedro, \emph{Quadratic response of random and deterministic dynamical systems}, Chaos \textbf{30} (2020),  023113, 15 pp. 
		\bibitem{GTQ} C. Gonzalez-Tokman and A. Quas, \emph{A semi-invertible operator Oseledets theorem}, Ergodic Theory Dynam. Systems \textbf{34} (2014), 1230--1272.
		\bibitem{K} A. Korepanov, \emph{Linear response for intermittent maps with summable and nonsummable decay of correlations}, Nonlinearity \textbf{29} (2016), 1735--1754.
		\bibitem{Ruelle97} D. Ruelle, \emph{Differentiation of SRB states}, Comm. Math. Phys. \textbf{187} (1997),  227--241. 
		\bibitem{RS} J. Sedro and H.H Rugh, \emph{Regularity of characteristic exponents and linear response for transfer operator cocycles}, Comm. Math. Phys. 383, 1243--1289 (2021).
		\bibitem{S} J. Sedro, \emph{A regularity result for fixed points, with applications to linear response},
		Nonlinearity \textbf{31} (2018),  1417--1440.
	\end{thebibliography}

\end{document}